\numberwithin{equation}{section}
\DeclareMathOperator*{\essinf}{ess\,inf}
\DeclareMathOperator*{\esssup}{ess\,sup}
\newcommand{\pp}{{p(\cdot)}}
\newcommand{\cpp}{{p'(\cdot)}}
\newcommand{\Lp}{L^p}
\newcommand{\Lpp}{L^{p(\cdot)}}
\newcommand{\rr}{{r(\cdot)}}
\newcommand{\LH}{\text{LH}}
\newcommand{\LLH}{\text{LH}_0}
\newcommand{\GLH}{\text{LH}_\infty}
\newcommand{\Ap}{A_p}
\newcommand{\App}{{A_{p(\cdot)}}}
\newcommand{\D}{\mathcal{D}}
\def\Xint#1{\mathchoice
   {\XXint\displaystyle\textstyle{#1}}%
   {\XXint\textstyle\scriptstyle{#1}}%
   {\XXint\scriptstyle\scriptscriptstyle{#1}}%
   {\XXint\scriptscriptstyle\scriptscriptstyle{#1}}%
   \!\int}
\def\XXint#1#2#3{{\setbox0=\hbox{$#1{#2#3}{\int}$}
     \vcenter{\hbox{$#2#3$}}\kern-.5\wd0}}
\def\avgint{\Xint-}
\DeclarePairedDelimiter\ceil{\lceil}{\rceil}
\DeclarePairedDelimiter\floor{\lfloor}{\rfloor}
\DeclarePairedDelimiter{\Norm}{\lVert}{\rVert}
\newtheorem{theorem}{Theorem}[section]
\newtheorem*{theorem*}{Theorem}
\newtheorem{corollary}[theorem]{Corollary}
\newtheorem{lemma}[theorem]{Lemma}
\newtheorem*{lemma*}{Lemma}
\newtheorem{definition}[theorem]{Definition}
\newtheorem*{definition*}{Definition}
\newtheorem*{example*}{Example}
\newtheorem{proposition}[theorem]{Proposition}
\newtheorem*{proposition*}{Proposition}
\newtheorem{conjecture}[theorem]{Conjecture}
\newtheorem*{conjecture*}{Conjecture}
\newtheorem{remark}[theorem]{Remark}
\newtheorem*{remark*}{Remark}
\begin{document}
\title{Weighted norm inequalities for the maximal operator on $\Lpp$ over spaces of homogeneous type}

\author{David Cruz-Uribe, OFS}
\author{Jeremy Cummings}

\thanks{This paper is based on the masters thesis written by the second author.  The first author is supported by \
  research funds from the Dean of the College of Arts \& Sciences, the
  University of Alabama.  }

\date{July 21, 2020}


\keywords{Variable Lebesgue spaces, maximal operator, weights, spaces of homogeneous type}
\subjclass{42B25, 46A80, 46E30}

\begin{abstract}
Given a space of homogeneous type $(X,\mu,d)$, we prove strong-type weighted norm inequalities for the Hardy-Littlewood maximal operator over the variable exponent Lebesgue spaces $L^\pp$. We prove that the variable Muckenhoupt condition $\App$ is necessary and sufficient for the strong type inequality if $\pp$ satisfies log-H\"older continuity conditions and $1 < p_- \leq p_+ < \infty$. Our results generalize to spaces of homogeneous type the analogous results in Euclidean space proved in \cite{paper}.
\end{abstract}

\maketitle



\section{Introduction}
This paper is concerned with extending established results in the theory of variable exponent Lebesgue spaces to the setting of spaces of homogeneous type. In recent decades---largely as a result of \cite{modern}---interest has arisen over the natural extension of the classical Lebesgue spaces $\Lp$ in which the exponent $p$ is itself a function of the underlying space; see \cite{book, Diening} for extensive discussions on such spaces. In particular, the development of a variable exponent Calder\'{o}n-Zygmund theory has been the subject of much research, especially since Cruz-Uribe, Fiorenza, and Neugebauer \cite{unweighted}, building on the work of Diening \cite{early}, proved that the Hardy-Littlewood maximal operator is bounded on $\Lpp$ for $\pp$ satisfying a continuity condition weaker than H\"older continuity.

Just as with the development of classical Calder\'on-Zygmund theory, many results in the theory of variable exponent spaces have only been proved to hold over $\mathbb{R}^n$ or in metric spaces (see \cite{metric} for the latter). In the 1970s, this restriction was removed by Coifman and Weiss, who in \cite{French} introduced spaces of homogeneous type, which they later developed in \cite{English} as the natural spaces onto which Calder\'{o}n-Zygmund theory could be generalized. A logical step for variable exponent theory, then, is to perform the same generalization for $\Lpp$. Such a program has been underway since the maximal operator was shown to be bounded over $\mathbb{R}^n$. Early results include \cite{historytwo,historythree,Khabazi,historyone}; for a more detailed history, see \cite{Adamowicz}.

Spaces of homogeneous type have a topological structure weaker than metric spaces: namely, that of a quasi-metric space.

\begin{definition} \label{quasi}
Given a set $X$ and a function $d\,:\,X \times X \to [0,\infty)$, we say $(X,d)$ is a \textbf{quasi-metric space} if
\begin{enumerate}
\item $d(x,y)=0$ if and only if $x=y$.
\item $d(x,y)=d(y,x)$ for all $x,y \in X$.
\item There exists a constant $A_0 \geq 1$ for which $d(x,y) \leq A_0(d(x,z)+d(z,y))$ for all $x,y,z \in X$.
\end{enumerate}
\end{definition}

The constant $A_0$ is referred to as the \textit{quasi-metric constant}. Some authors (e.g. \cite{symmetry}) also loosen condition (2) to symmetry up to a constant, $d(x,y) \leq Kd(y,x)$. An important property of quasi-metric spaces is that quasi-metric balls need not be open; however, Mac\'ias and Segovia \cite{equivalent} showed that there is always an equivalent quasi-metric whose balls are all open. Analysis can be done on quasi-metric spaces without additional structure---see \cite{Ahlfors}---but typically measures on quasi-metric measure spaces are taken to be at least doubling.

\begin{definition}
A measure $\mu$ on a space $X$ is said to be \textbf{doubling} if there exists a constant $C_\mu \geq 1$ such that, for any $x \in X$ and $r>0$,
\[ 0 < \mu(B(x,2r)) \leq C_\mu\mu(B(x,r)) < \infty, \]
where $B(x,r) = \{y \in X\,:\,d(x,y) < r\}$ is the quasi-metric ball of radius $r$ centered at $x$. The constant $C_\mu$ is called the \textbf{doubling constant}.
\end{definition}

The assumption that balls have positive, finite measure avoids trivial measures, and also ensures that $\mu$ is $\sigma$-finite. We are now led naturally to the well-known setting of spaces of homogeneous type.

\begin{definition}
A \textbf{Space of Homogeneous Type} is a triple $(X,d,\mu)$ where $X$ is a non-empty set, $d$ is a quasi-metric on $X$, and $\mu$ is a doubling regular measure on the $\sigma$-algebra generated by quasi-metric balls and open sets.
\end{definition}

Hereafter, we will let $(X,d,\mu)$ be a fixed space of homogeneous type, and often denote it simply by $X$. The assumption that $\mu$ is regular is used only to apply the Lebesgue Differentiation Theorem in Section~\ref{necessity}; see \cite{Ahlfors} for the possibility of weakening this hypothesis.

We now introduce some basic notions of the variable exponent spaces $\Lpp(X)$.

\begin{definition}
Define $\mathcal{P}(X)$ to be the set of measurable functions $\pp\,:\,X \to [1,\infty]$. The elements of $\mathcal{P}(X)$ are called \textbf{exponent functions}. Given an exponent function and a set $E \subseteq X$, we define
\[ p_-(E) = \essinf_{x \in E} p(x) \qquad p_+(E) = \esssup_{x \in E} p(x). \]
In particular we denote $p_-(X)=p_-$ and $p_+(X)=p_+$.
\end{definition}

When considering the conjugate exponent function $\cpp$ defined by $p'(x)=\frac{p(x)}{p(x)-1}$ (with the convention that $1/0 = \infty$ and $1/\infty = 0$), to avoid the ambiguity inherent to notation like ``$p'_+$'' we will always write $(p')_+$ to denote the essential supremum of $\cpp$, etc.

\begin{definition} Given an exponent $\pp \in \mathcal{P}(X)$, define
\begin{itemize}
\item $X_\infty = \{x \in X \,:\, p(x)=\infty\}$
\item$X_1 = \{x \in X \,:\, p(x)=1\}$
\item$X_* = \{x \in X \,:\, 1<p(x)<\infty\}$.
\end{itemize}
\end{definition}

Intuitively, given an exponent $\pp \in \mathcal{P}(X)$, we would like to define $\Lpp(X)$ as the collection of all functions on $X$ satisfying
\[ \int_X |f|^{p(x)}\,d\mu < \infty. \]
To properly formulate this, we require an analog to the constant exponent $p$-norm. It is well-known that the following modular function provides such an analog.

\begin{definition}
The space $\Lpp(X)$ is the set of measurable functions $f$ on $X$ for which the modular
\[ \rho_\pp(f) = \int_{X \backslash X_\infty} |f(x)|^{p(x)}d\mu+\Norm{f}_{L^\infty(X_\infty)} \]
satisfies $\rho_\pp(f/\lambda) < \infty$ for some $\lambda > 0$.
\end{definition}

If $p_+ < \infty$, we say $f$ is locally $\pp$-integrable if $\rho_\pp(f\chi_B) < \infty$ for every ball $B \subset X$. Often we write $\Lpp$ for $\Lpp(X)$; similarly, when the exponent is clear from context, we will simply write $\rho_\pp = \rho$. It is shown in \cite{book,Diening} that this modular induces the following Luxembourg norm on $\Lpp$.

\begin{proposition}
The function $\Norm{\cdot}\,:\,\Lpp(X) \to \mathbb{R}$ given by
\[ \Norm{f}_{\Lpp(X)} = \inf\{\lambda>0:\rho(f/\lambda)\leq1\} \]
is a norm on $\Lpp(X)$, which is Banach with respect to $\Norm{\cdot}_{L^\pp(X)}$.
\end{proposition}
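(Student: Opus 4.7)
The plan is to prove the two assertions---that $\Norm{\cdot}_{\Lpp(X)}$ is a norm, and that $\Lpp(X)$ is complete under it---following the standard modular approach used in the Euclidean setting (see \cite{book, Diening}). The underlying space enters only through $\sigma$-finiteness of $\mu$ and the countable exhaustion of $X$ by balls of finite measure, both of which follow from the doubling hypothesis and the finiteness assumption on balls; aside from this, the argument is purely measure-theoretic and transfers essentially unchanged.

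The fundamental technical input is convexity of the modular: for $f,g \in \Lpp(X)$ and $\theta \in [0,1]$, pointwise convexity of $t \mapsto t^{p(x)}$ on $[0,\infty)$, combined with subadditivity and homogeneity of $\Norm{\cdot}_{L^\infty(X_\infty)}$, yields $\rho(\theta f + (1-\theta)g) \leq \theta \rho(f) + (1-\theta)\rho(g)$. With this in hand, the norm axioms follow in turn. Absolute homogeneity is immediate since $\rho$ depends only on $|f|$: substituting $\nu = \lambda/|\alpha|$ in the defining infimum gives $\Norm{\alpha f} = |\alpha|\Norm{f}$. For the triangle inequality, given $\lambda_f > \Norm{f}$ and $\lambda_g > \Norm{g}$ with $\rho(f/\lambda_f), \rho(g/\lambda_g) \leq 1$, applying convexity with $\theta = \lambda_f/(\lambda_f+\lambda_g)$ gives $\rho((f+g)/(\lambda_f+\lambda_g)) \leq 1$, so $\Norm{f+g} \leq \lambda_f + \lambda_g$; taking infima over $\lambda_f, \lambda_g$ completes the step. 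Definiteness is proved by contradiction: if $\Norm{f} = 0$, then $\rho(f/\lambda) \leq 1$ for every $\lambda > 0$; the $L^\infty$ summand forces $f = 0$ a.e. on $X_\infty$, and sending $\lambda \to 0^+$ in the integral summand together with Fatou's lemma forces $\mu(\{x \in X\setminus X_\infty : f(x) \neq 0\}) = 0$.

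For completeness, let $(f_n)$ be Cauchy in $\Lpp(X)$. The key intermediate step is to show that $\Norm{\cdot}$-convergence implies convergence in $\mu$-measure on every set $E \subset X \setminus X_\infty$ of finite measure with $p_+(E) < \infty$: on such $E$ a Chebyshev-type estimate applied to $\rho((f_n - f_m)/\epsilon)$ yields $\mu(\{x \in E : |f_n - f_m|(x) > \delta\}) \to 0$. Using $\sigma$-finiteness of $\mu$ and the exhaustion of $X\setminus X_\infty$ by balls intersected with the level sets $\{p < k\}$, a standard diagonal argument produces a subsequence $f_{n_k}$ converging pointwise $\mu$-a.e.\ on $X \setminus X_\infty$ to some measurable $f$; on $X_\infty$, convergence in $L^\infty$ yields an a.e.\ limit directly. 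A Fatou-type argument for the modular---given $\epsilon > 0$, choose $N$ so that $\Norm{f_n - f_m} < \epsilon$ for $n,m \geq N$; then $\rho((f_n - f_{n_k})/\epsilon) \leq 1$ for large $k$, and lower semicontinuity of the modular gives $\rho((f_n - f)/\epsilon) \leq 1$, hence $\Norm{f_n - f} \leq \epsilon$---delivers the limit in norm.

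The main point requiring care is the separate treatment of $X_\infty$, on which the modular is an $L^\infty$ norm rather than an integral; each step (convexity, Fatou, the Chebyshev passage) must be carried out in parallel on $X_\infty$ and on $X \setminus X_\infty$. Beyond this bookkeeping, the only space-of-homogeneous-type input is the $\sigma$-finite exhaustion by sets of finite measure and bounded exponent, which is available under the standing hypotheses on $(X,d,\mu)$; consequently no genuinely new argument is required beyond the Euclidean proof in \cite{book, Diening}.
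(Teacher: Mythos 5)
The paper offers no proof of this proposition, deferring entirely to \cite{book, Diening}; your argument is the standard Luxembourg-norm proof from those references (convexity of the modular for the norm axioms, Fatou-type lower semicontinuity for definiteness and completeness) and is essentially correct, modulo noting explicitly that $\lambda \mapsto \rho(f/\lambda)$ is decreasing so that $\lambda_f > \Norm{f}_{\Lpp}$ does give $\rho(f/\lambda_f)\leq 1$.
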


When the underlying space is clear from context, we write $\Norm{\cdot}_{\Lpp} = \Norm{\cdot}_\pp$. In the case that $\pp$ is constant, $\pp = p$, it is easy to show that $\Norm{\cdot}_\pp$ reduces to the usual norm in $\Lp$.

For most purposes, the set $\mathcal{P}(X)$ of possible exponent functions is far too broad to prove meaningful results. Indeed, even piecewise-constant exponents lose many of the properties of classical $\Lp$ spaces (see \cite{book}), such as the boundedness of the maximal operator. In the study of variable exponent theory, it has become clear that in many cases a sufficient condition on the exponent is log-H\"older continuity.

\begin{definition}
We say that an exponent $\pp \in \mathcal{P}(X)$ is \textbf{locally Log-H\"{o}lder continuous}, $\pp \in \text{LH}_0$, if there exists a constant $C_0$ such that for any $x,y \in X$ with $d(x,y)<1/2$, 
\[ |p(x)-p(y)| < \frac{-C_0}{\log(d(x,y))}. \]
We say that $\pp$ is \textbf{Log-H\"{o}lder continuous at infinity}, $\pp \in \text{LH}_\infty$, with respect to a base point $x_0 \in X$ if there exist constants $C_\infty$ and $p_\infty$ such that for every $x \in X$,
\[ |p(x)-p_\infty| < \frac{C_\infty}{\log(e+d(x,x_0))}. \]
We call $C_0$ the $\text{\bf{LH}}_\mathbf{0}$ \textbf{constant} of $\pp$ and $C_\infty$ the $\text{\bf{LH}}_\mathbf{\infty}$ \textbf{constant} of $\pp$. If $\pp \in \LH = \LLH \cap \text{LH}_\infty$, we say that $\pp$ is \textbf{globally Log-H\"{o}lder continuous}.
\end{definition}

Note that $\pp \in \LH$ implies that $p_+ < \infty$, a condition that is crucial to most of the results in this paper. Note also that the above definition appears to depend on the choice of base point $x_0$. In fact, such a choice is irrelevant, as shown by the following lemma, which was proved in \cite{Adamowicz}.

\begin{lemma} \label{basepoint}
Choose $x_0, y_0 \in X$. If $\pp \in \GLH$ with respect to $x_0$, then $p \in \GLH$ with respect to $y_0$.
\end{lemma}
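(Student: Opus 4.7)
The plan is to observe first that the $\LH_0$ condition depends only on $p(\cdot)$ and the quasi-metric $d$, with no reference to a base point; so the lemma reduces to showing that $\pp \in \LH_\infty$ with respect to $x_0$ implies $\pp \in \LH_\infty$ with respect to $y_0$, with the same asymptotic value $p_\infty$ but a possibly larger constant $C_\infty'$. Since both $\frac{1}{\log(e+d(x,x_0))}$ and $\frac{1}{\log(e+d(x,y_0))}$ are at most $1$ (as $e + d \geq e$), this boils down to showing that the ratio $\log(e+d(x,y_0))/\log(e+d(x,x_0))$ is bounded above by a constant depending only on $A_0$ and $D := d(x_0, y_0)$.

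Set $D = d(x_0, y_0)$ and let $A_0$ be the quasi-metric constant. I would split the analysis into two regimes according to how large $d(x, y_0)$ is compared to $D$. In the bounded regime, say $d(x,y_0) \leq 2 A_0 D$, the quasi-triangle inequality gives $d(x, x_0) \leq A_0(d(x, y_0) + D) \leq A_0(2 A_0 + 1)D$, so $\log(e + d(x, x_0))$ is bounded above by a constant $M = M(A_0, D)$ while $\log(e + d(x, y_0)) \geq 1$; hence $1/\log(e+d(x,x_0)) \geq 1/M$ and we can trivially dominate $1/\log(e+d(x,x_0))$ by $M/\log(e+d(x,y_0))$. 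In the unbounded regime $d(x, y_0) > 2 A_0 D$, the reverse quasi-triangle inequality $d(x, y_0) \leq A_0(d(x,x_0) + D)$ gives $d(x, x_0) \geq \frac{1}{A_0} d(x, y_0) - D \geq \frac{1}{2 A_0} d(x, y_0)$, so $e + d(x, x_0) \geq \frac{1}{2A_0}(e + d(x,y_0))$, and taking logarithms yields $\log(e+d(x,x_0)) \geq \log(e + d(x, y_0)) - \log(2A_0)$. Because $d(x,y_0)$ is large, a short computation shows $\log(e + d(x, y_0)) - \log(2 A_0) \geq c \log(e + d(x, y_0))$ for some $c = c(A_0, D) > 0$, which gives the desired pointwise comparison.

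Combining the two cases, there exists $K = K(A_0, D)$ such that
\[
\frac{1}{\log(e + d(x, x_0))} \leq \frac{K}{\log(e + d(x, y_0))}
\]
for every $x \in X$. Substituting this into the $\LH_\infty$ inequality with base point $x_0$ gives the $\LH_\infty$ inequality with base point $y_0$, with the same $p_\infty$ and with constant $C_\infty' := K C_\infty$. Together with the (base-point-free) $\LH_0$ condition, this shows $\pp \in \GLH$ with respect to $y_0$.

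The only mildly delicate step is the unbounded regime, where I need to pick the threshold ($2A_0 D$ above) carefully so that the additive constant $\log(2A_0)$ picked up after taking logarithms can be absorbed into a multiplicative constant; once the threshold is chosen to make $d(x, x_0)$ comparable to $d(x, y_0)$, everything else is a routine application of the quasi-triangle inequality.
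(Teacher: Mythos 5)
The paper does not actually prove Lemma~\ref{basepoint}; it cites~\cite{Adamowicz} for it. So there is no in-paper proof to compare against, but your approach --- reducing to an $\LH_\infty$ comparison, observing $\LLH$ is base-point free, and splitting $X$ into a region near $y_0$ and a region far from $y_0$ where the quasi-triangle inequality makes $d(x,x_0)$ and $d(x,y_0)$ comparable --- is exactly what one expects, and the core ideas are right. The bounded regime, though phrased somewhat indirectly, reduces to the correct observation that $\log(e+d(x,y_0))$ is bounded above there while $\log(e+d(x,x_0)) \geq 1$.

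There is, however, a small but genuine gap in the far regime. After obtaining $\log(e+d(x,x_0)) \geq \log(e+d(x,y_0)) - \log(2A_0)$, you claim that ``because $d(x,y_0)$ is large'' one has $\log(e+d(x,y_0)) - \log(2A_0) \geq c\log(e+d(x,y_0))$ for some $c > 0$. This requires $\log(e+d(x,y_0)) \geq \log(2A_0)/(1-c)$ on the whole far regime, and the infimum of $\log(e+d(x,y_0))$ there is $\log(e+2A_0 D)$. For $c > 0$ to exist you need $\log(e+2A_0D) > \log(2A_0)$, i.e.\ $e + 2A_0D > 2A_0$, which fails when $D = d(x_0,y_0)$ is small and $A_0 > e/2$. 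So the threshold $2A_0D$ alone does not guarantee that $d(x,y_0)$ is ``large'' in the sense you need. Two standard fixes: (i) raise the cutoff to $\max\{2A_0D, 2A_0\}$ (the near-regime argument handles any fixed bounded set, so this costs nothing), or (ii) avoid the additive subtraction altogether by keeping the estimate $e + d(x,x_0) \geq e + \tfrac{1}{2A_0}d(x,y_0)$ and invoking the boundedness of $s \mapsto \log(e+as)/\log(e+bs)$ for $a > b > 0$ --- precisely the fact the paper records at~\eqref{function} and uses in the proof of Lemma~\ref{DCU2.9}. Either patch closes the gap; with it, your proof is correct.
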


Whenever $x_0$ is not chosen explicitly, we assume that $X$ has a fixed, arbitrarily chosen base point $x_0$.

We are interested in weighted norm inequalities on $\Lpp$. For classical Lebesgue spaces, much of the theory of such inequalities is due to Muckenhoupt (see e.g. \cite{Muckenhoupt}). The following definition clarifies some standard notation.

\begin{definition}
A \textbf{weight} is a locally integrable function $w\,:\,X \to [0,\infty]$ with $0 < w(x) < \infty$ almost everywhere. Given a weight $w$, we define its associated measure by $dw(x) =  w(x)\,d\mu(x)$. The weighted average integral of a function $f$ over a set $E \subset X$ with $w(E) > 0$ is denoted
\[ \avgint_E f(x)\,dw = \frac{1}{w(E)}\int_E f(x)\,w(x)\,d\mu. \]
If $w = 1$, we replace $dw$ with $d\mu$.
\end{definition}

We denote by $M$ the uncentered Hardy-Littlewood maximal operator; that is,
\[ Mf(x) = \sup_{B \ni x} \avgint |f(y)|\,d\mu. \]
For classical Lebesgue spaces $\Lp(\mathbb{R}^n)$, Muckenhoupt proved in \cite{Muckenhoupt} that a necessary and sufficient condition for strong-type $(p,p)$ weighted norm inequalities, $p > 1$, is that for every ball $B$,
\[ \avgint_B w(x)\,dx\left(\avgint_B w(x)^{1-p'}\,dx\right)^{p-1} \leq C < \infty. \]
This is the famous Muckenhoupt $\Ap$ condition. In \cite{paper} the $\Ap$ condition is recast into an equivalent form which may be generalized to variable exponent spaces.

\begin{definition} \label{App}
Given an exponent $\pp \in \mathcal{P}(X)$, we say $w \in \App$ if there exists a constant $K$ such that for any ball $B$,
\[ \Norm{w\chi_B}_\pp\Norm{w^{-1}\chi_B}_\cpp \leq K\mu(B). \]
The infimum over all such $K$ is called the $\App$ constant and is denoted $[w]_\App$.
\end{definition}

\begin{remark} If we adopt the usual convention that
\[ c \cdot \infty = \begin{cases} 0 & c = 0 \\ \infty & c > 0 \end{cases}, \]
if $w \in \App$, then $\Norm{w\chi_B}_\pp = \infty$ implies that $\Norm{w^{-1}\chi_B}_\cpp = 0$, and thus that $w^{-1}$ is the zero element in $L^{\cpp}$, contrary to $w$ being finite almost everywhere. Thus $w \in \Lpp$ and if $p_+ < \infty$ we can say that $w$ is locally $\pp$-integrable.
\end{remark}

In the case that $\pp$ is constant, $\pp=p \in (1,\infty)$, the $\App$ condition for $w$ is equivalent to Mucknhoupt's $\Ap$ condition for $w^p$. The necessity and sufficiency of the $\App$ condition for strong-type weighted norm inequalities of the maximal operator in $\mathbb{R}^n$ was first proved in \cite{pre} and simultaneously \cite{paper}. The following theorem, which is our main result, generalizes this to the case of spaces of homogeneous type.

\begin{theorem} \label{goal} Given $\pp \in \LH$ with $1 < p_- \leq p_+ < \infty$ and a weight $w$,
\[ \Norm{(Mf)w}_\pp \leq C\Norm{fw}_\pp \]
if and only if $w \in \App$.
\end{theorem}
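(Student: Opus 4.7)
The plan is to prove both directions; necessity is a standard testing argument, while sufficiency follows the Euclidean strategy of \cite{paper}, adapted to the quasi-metric setting via a Hyt\"onen-Kairema system of adjacent dyadic cubes on $X$.

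For \emph{necessity}, I would fix a ball $B$ and a nonnegative $h$ supported in $B$, then set $f = w^{-1}h\chi_B$. Since $Mf(x) \geq \mu(B)^{-1}\int_B |f|\,d\mu$ for every $x \in B$, the hypothesized inequality gives
\[ \frac{\Norm{w\chi_B}_\pp}{\mu(B)}\int_B w^{-1}h\,d\mu \leq C\Norm{wf}_\pp = C\Norm{h\chi_B}_\pp. \]
Taking the supremum over $h$ with $\Norm{h\chi_B}_\pp \leq 1$ and invoking the associate-space characterization $\Norm{g\chi_B}_\cpp \approx \sup\{\int_B gh\,d\mu : \Norm{h\chi_B}_\pp \leq 1\}$ yields $\Norm{w\chi_B}_\pp\Norm{w^{-1}\chi_B}_\cpp \lesssim \mu(B)$, so $w \in \App$. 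The regularity of $\mu$ is invoked via Lebesgue differentiation to upgrade almost-everywhere lower bounds for $Mf$ on $B$ to the pointwise form needed above.

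For \emph{sufficiency}, I would first reduce to a dyadic maximal operator. A Hyt\"onen-Kairema finite family of adjacent dyadic systems $\{\mathcal{D}^t\}_{t=1}^T$ on $X$ gives $Mf(x) \lesssim \sum_{t=1}^T M^{\mathcal{D}^t}f(x)$, so it suffices to bound each $M^{\mathcal{D}^t}$. Fixing a dyadic system $\mathcal{D}$, a Calder\'on-Zygmund stopping-time decomposition at levels $2^k$ produces a sparse family $\{Q_j^k\}\subset\mathcal{D}$ with pairwise disjoint sets $E_j^k\subset Q_j^k$, $\mu(E_j^k)\gtrsim\mu(Q_j^k)$, such that
\[ M^{\mathcal{D}}f(x) \lesssim \sum_{k,j}\left(\avgint_{Q_j^k}|f|\,d\mu\right)\chi_{E_j^k}(x). \]
One then substitutes this sparse expansion into $\Norm{w\, M^{\mathcal{D}}f}_\pp$, using the $\App$ condition cube-by-cube to convert $\mu$-averages of $f$ into $wf$-averages, and closes the estimate via a Carleson/sparse summation argument.

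The principal obstacle---and the heart of the argument---is the variable-exponent log-H\"older exchange inequality: for every quasi-metric ball $B$ and $x \in B$,
\[ \left(\avgint_B |g|\,d\mu\right)^{p(x)} \lesssim \avgint_B |g(y)|^{p(y)}\,d\mu(y) + E(x), \]
with $E$ independent of $B$ and uniformly bounded in $\Lpp$. In $\R^n$ this depends on $|B|=c_n r^n$ to absorb the factor $\mu(B)^{p(x)-p(y)}$ using the $\LLH$ condition on small balls and the $\LH_\infty$ condition on large balls; in a space of homogeneous type only doubling is available, so the log-H\"older conditions must first be recast in ``measure-theoretic'' form, estimating $\mu(B)^{|p(x)-p(y)|}$ directly via doubling and the diameter of $B$. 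Lemma~\ref{basepoint} allows a convenient choice of $x_0$. Once this exchange inequality is transplanted to $X$, the sparse/Carleson machinery and the passage from modular to norm estimates go through essentially as in the Euclidean case.
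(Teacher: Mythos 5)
Your necessity argument is correct and is in fact a cleaner route than the one in the paper. Testing with $f = w^{-1}h\chi_B$ for $\Norm{h\chi_B}_\pp \le 1$, using the pointwise bound $Mf \ge (\textaver{B}|f|\,d\mu)\chi_B$, and then appealing to the associate-norm characterization of $\Norm{\cdot}_{\cpp}$ gives $\Norm{w\chi_B}_\pp\Norm{w^{-1}\chi_B}_\cpp \lesssim \mu(B)$ directly, and the same computation also shows both factors are finite. The paper instead proves the stronger statement that the \emph{weak}-type inequality already forces $w\in\App$, which requires the more elaborate case analysis over $F_0$ and $F_\infty$ and the approximation $w_\epsilon = w+\epsilon$; since the theorem in question only asserts strong-type necessity, your duality argument suffices. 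One small correction: the bound $Mf(x)\ge \textaver{B}|f|\,d\mu$ holds for \emph{every} $x\in B$ by taking $B'=B$ in the supremum defining $M$, so Lebesgue differentiation is not what upgrades this; the paper uses the Lebesgue differentiation theorem for a different purpose (ruling out $\int_E w^{p}\,d\mu=\infty$ on small balls $E$).

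The sufficiency sketch has the right skeleton (Hyt\"onen--Kairema adjacent grids, reduction to $M^{\mathcal D}$, Calder\'on--Zygmund sparse family), but as written there are genuine gaps, and the ``exchange inequality'' you identify as the heart of the argument is the \emph{unweighted} Diening/Nekvinda estimate, which does not plug into the weighted modular. Concretely, what is missing: (1) one must first show that $W=w^{\pp}$ and $\sigma=w^{-\cpp}$ are $A_\infty$ weights (Lemma~\ref{Ap to Ainfty}), which is not a routine consequence of $w\in\App$ and is what lets you replace $\mu(E^k_j)\gtrsim\mu(Q^k_j)$ by $\sigma(E^k_j)\gtrsim\sigma(Q^k_j)$; (2) alongside $\mu(B)^{p_-(B)-p_+(B)}\le C$ one needs the weighted analogue $\Norm{w\chi_B}_\pp^{p_-(B)-p_+(B)}\le C$ (Lemma~\ref{fracexp}), which controls the oscillation of the exponent against the \emph{weight} rather than the measure; (3) the function must be split as $f=f_1+f_2$ according to whether $f\sigma^{-1}\gtrless 1$, because the $\LLH$ mechanism is applied to $f_1$ via Jensen's inequality while the $\GLH$ mechanism is applied to $f_2$ through Lemma~\ref{DCU2.7} -- without this split the modular estimate does not close; (4) the sparse/Carleson summation closes by invoking the $L^{p_\infty}(\sigma)$ boundedness of the \emph{weighted} dyadic maximal operator $M^{\mathcal D}_\sigma$, not an unweighted bound, and this is where $p_->1$ enters; (5) for $f_2$ the CZ cubes must further be partitioned into $\mathscr F,\mathscr G,\mathscr H$ depending on whether they sit inside a fixed reference cube $Q_0$, touch it, or are far from $x_0$, with separate estimates in each regime; and (6) the finite-measure case $\mu(X)<\infty$ needs a separate treatment because the CZ decomposition only exists above the threshold $\lambda_0=\textaver{X}|f|\,d\mu$. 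These are not Euclidean details that transfer automatically -- they are the substance of the sufficiency proof -- so the plan as stated is not yet a proof.
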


By analogy with weak- and strong-type inequalities in classical $\Lp$ spaces, Theorem~\ref{goal} naturally suggests the following weak-type inequality, which remains an open problem.

\begin{conjecture} \label{conj}
Given $\pp \in \LH$ with $p_+ < \infty$ and a weight $w$,
\[ \Norm{t\chi_{\{x \in X\,:\,Mf(x)>t\}}w}_\pp \leq C\Norm{fw}_\pp \]
if and only if $w \in \App$.
\end{conjecture}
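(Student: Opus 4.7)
The plan divides into the (routine) necessity direction and the (open) sufficiency direction. For necessity I would adapt the test-function argument used for the strong-type case. Fix a ball $B$ and a nonnegative $h$ with $\Norm{h\chi_B}_\pp \leq 1$, and set $f = hw^{-1}\chi_B$, so $fw = h\chi_B$. For every $x \in B$ the uncentered maximal function satisfies $Mf(x) \geq \avgint_B hw^{-1}\,d\mu$, hence $B \subseteq \{Mf > t\}$ for every $t$ strictly below this average. Applying the weak-type inequality and letting $t$ approach the average yields
\begin{equation*}
\frac{1}{\mu(B)}\int_B hw^{-1}\,d\mu \cdot \Norm{w\chi_B}_\pp \leq C\Norm{h\chi_B}_\pp;
\end{equation*}
multiplying by $\mu(B)$ and taking the supremum over admissible $h$ then produces $\Norm{w\chi_B}_\pp\Norm{w^{-1}\chi_B}_\cpp \leq K\mu(B)$ via the $\Lpp$ associate-space formula.

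Sufficiency is the substantive half, and the genuinely new content of the conjecture lies in the case $p_- = 1$, since $p_- > 1$ is already covered by Theorem~\ref{goal}. My strategy would be a Calder\'on--Zygmund decomposition of $\{Mf > t\}$ built from the dyadic systems available on $(X,d,\mu)$ (Christ cubes or Hyt\"onen--Kairema). This produces an essentially disjoint family $\{B_j\}$ whose dilates $B_j^*$ have bounded overlap and satisfy $\avgint_{B_j^*} |f|\,d\mu \sim t$. Applying the $\Lpp$ H\"older inequality on each $B_j^*$ and then the $\App$ condition converts the average into a norm estimate, yielding the localized bound
\begin{equation*}
t\,\Norm{w\chi_{B_j}}_\pp \lesssim [w]_\App\,\Norm{fw\chi_{B_j^*}}_\pp.
\end{equation*}

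The central difficulty is assembling these ball-by-ball estimates into the global bound $\Norm{tw\chi_{\cup B_j}}_\pp \leq C\Norm{fw}_\pp$. In the constant-exponent case one raises both sides to the $p$-th power and sums using bounded overlap, but the modular of $\Lpp$ does not interact cleanly with norm inequalities. I would attempt to work at the modular level: set $\lambda = \Norm{fw}_\pp$, use sub-additivity of $\rho$ on disjoint sets to obtain $\rho(tw\chi_{\cup B_j}/(C\lambda)) \leq N\sum_j \rho(tw\chi_{B_j}/(C\lambda))$, and upgrade the localized norm inequality above to a localized modular comparison $\rho(tw\chi_{B_j}/(C\lambda)) \lesssim \rho(fw\chi_{B_j^*}/\lambda)$. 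The standard way to make this upgrade is to freeze $\pp$ to a constant $p_j$ on each $B_j$ using the $\LLH$ condition, control the long-range behavior with the $\GLH$ condition together with local integrability of $w$, and finally invoke bounded overlap of $\{B_j^*\}$ to bring the sum back under $\rho(fw/\lambda) \leq 1$.

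The main obstacle, and the reason the conjecture has remained open, is precisely the endpoint $p_- = 1$. When $\pp$ genuinely touches $1$ on a set of positive measure, the freezing step produces local comparison exponents equal to $1$, so the required local modular bound degenerates into a weighted weak-type $(1,1)$ inequality on $B_j$; such weak estimates do not recombine under summation the way strong ones do. Nor is there an extrapolation theorem in the variable-exponent setting that operates at the weak-type endpoint and could reduce the problem to the classical Muckenhoupt weak $(1,1)$ bound, and $\Lpp$ duality is unavailable for level-set characteristic functions. Overcoming this would likely require either a sharper Calder\'on--Zygmund decomposition that separates the behavior on $X_1$ from $X_*$ before recombining the estimates, or a good-$\lambda$-type inequality on $\{Mf > t\}$ that controls the weighted $\Lpp$ quantity directly without ever summing local norm bounds.
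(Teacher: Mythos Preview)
The statement is labeled a \emph{conjecture} in the paper, and the paper does not prove it in full: only the necessity direction is established (Section~\ref{necessity}), while sufficiency for $p_-=1$ is explicitly flagged as open, the paper even pointing out the gap in an earlier claimed Euclidean proof. Your proposal is thus consistent with the paper in treating sufficiency at the endpoint as unresolved, and your diagnosis of the obstruction---that $(p')_+=\infty$ when $p_-=1$, so the auxiliary lemmas controlling $w^{-1}\in A_{\cpp}$ break down---matches the paper's own explanation exactly.

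For necessity your route diverges from the paper's. You test with $f=hw^{-1}\chi_B$ for arbitrary $h$ with $\Norm{h\chi_B}_\pp\le 1$ and then invoke the associate-norm formula to take the supremum. The paper instead argues constructively: it first shows $\Norm{w\chi_B}_\pp<\infty$ via the Lebesgue differentiation theorem; then, after normalizing $\Norm{w^{-1}\chi_B}_\cpp=1$, it splits $B$ into $F_\infty=\{p=1\}$ and $F_0=\{p>1\}$ and builds explicit test functions in each case (a characteristic function of a sublevel set of $w$ on $F_\infty$, and $f(x)=\lambda^{1-p'(x)}w(x)^{-p'(x)}\chi_{F_R}$ on a truncation $F_R\subset F_0$). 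The possibility $\Norm{w^{-1}\chi_B}_\cpp=\infty$ is handled separately by approximating $w$ with $w_\epsilon=w+\epsilon$ and passing to the limit. Your duality shortcut is cleaner in spirit but glosses over precisely the points the paper treats with care: you do not verify beforehand that $\Norm{w\chi_B}_\pp<\infty$, nor that $hw^{-1}\chi_B$ is locally integrable so that $Mf$ and the average $\avgint_B hw^{-1}$ are well defined, nor do you address what happens when the associate norm is infinite. These gaps are repairable (e.g., if the average diverges then $t\Norm{w\chi_B}_\pp\le C$ for every $t>0$, forcing $w=0$ a.e., a contradiction), but as written they are genuine omissions that the paper's explicit case analysis is designed to close.
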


We will prove the necessity of the $\App$ condition for the weak-type inequality in Section~\ref{necessity}. Moreover, if $p_- > 1$ then the conjecture follows from Theorem~\ref{goal}. Conjecture~\ref{conj} is claimed to be true in $\mathbb{R}^n$ in \cite{paper}, but the proof contains a gap: if $p_- = 1$ then $(p')_+ = \infty$, and so Lemmas 3.3-6 (which are analogous to Lemmas~\ref{fracexp} and \ref{Ap to Ainfty} in this paper) may not be applied to $w^{-1} \in A_\cpp$, as is done several times throughout their proof. 

The remainder of this paper is devoted to proving Theorem~\ref{goal}. In Sections \ref{Lp}, \ref{Ap}, and \ref{cubes}, we will collect several elementary results about $\Lpp$ spaces, the $\App$ condition, and dyadic grids, respectively, on spaces of homogeneous type. In Section \ref{necessity} we will prove the necessity of the $\App$ condition, and in Section \ref{sufficiency} we will prove its sufficiency.

We adopt the convention throughout that $C$ denotes a large constant dependent only on fixed quantities (usually $X$, $\pp$, $w$, and the dyadic grid $\mathcal{D}$, unless otherwise stated or obvious from context). Multiples of balls are written as $CB(x,r) = B(x,Cr)$. By $A \approx B$, we mean that there are constants $c,C$ with $cB \leq A \leq CB$. Finally for a weight $w$ and a set $E$ we write $w(E) = \int_E w(x)\,d\mu$.


\section{Variable Lebesgue spaces} \label{Lp}
This section is a collection of elementary results regarding variable Lebesgue spaces on spaces of homogeneous type. We begin with two lemmas concerning spaces of homogeneous type which will be used to prove many of the results in this paper. The first is well-known, and we omit the proof. The second characterizes finite spaces of homogeneous type and is proved in \cite[Lemma 1.9]{facts}.

\begin{lemma}[Lower Mass Bound] \label{LMB}
There exists a positive constant $C=C(X)$ such that for all $x \in X$, $0<r<R$, and $y \in B(x,R)$,
\[ \frac{\mu(B(y,r))}{\mu(B(x,R))} \geq C\left(\frac{r}{R}\right)^{\log_2C_\mu}. \]
\end{lemma}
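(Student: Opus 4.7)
The plan is to reduce the estimate to doubling iterations on balls with a common center, since the doubling condition as stated only allows us to compare $\mu(B(y,2r))$ with $\mu(B(y,r))$ at a fixed center $y$. First I would use the quasi-triangle inequality (Definition~\ref{quasi}(3)) to move from a ball centered at $x$ to one centered at $y$: since $y \in B(x,R)$, for every $z \in B(x,R)$ we have $d(y,z) \leq A_0(d(y,x)+d(x,z)) < 2A_0 R$, hence
\[ B(x,R) \subseteq B(y,2A_0 R), \qquad \mu(B(x,R)) \leq \mu(B(y,2A_0 R)). \]

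Next I would choose the smallest non-negative integer $k$ with $2A_0 R \leq 2^k r$; then $k \leq 1 + \log_2(2A_0) + \log_2(R/r)$. Iterating the doubling condition $k$ times at the center $y$ gives
\[ \mu(B(y,2A_0 R)) \leq \mu(B(y,2^k r)) \leq C_\mu^k\, \mu(B(y,r)). \]
Combining the two displays yields $\mu(B(x,R)) \leq C_\mu^k \mu(B(y,r))$, and hence
\[ \frac{\mu(B(y,r))}{\mu(B(x,R))} \geq C_\mu^{-k} \geq C_\mu^{-(1+\log_2(2A_0))}\left(\frac{r}{R}\right)^{\log_2 C_\mu}, \]
using $C_\mu^{-\log_2(R/r)} = (r/R)^{\log_2 C_\mu}$. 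The leading factor depends only on $A_0$ and $C_\mu$, which are determined by $X$, so it can be absorbed into the constant $C=C(X)$.

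There is no real obstacle here: the only subtle point is that doubling is a statement about concentric balls, so one must first pay a bounded price (the factor $2A_0$) to recenter the larger ball at $y$ before iterating. The exponent $\log_2 C_\mu$ arises precisely because each doubling of the radius multiplies the measure by at most $C_\mu$, so multiplying the radius by a factor $t$ multiplies the measure by at most $t^{\log_2 C_\mu}$.
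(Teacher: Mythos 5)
Your argument is correct, and it is the standard proof of this well-known fact, which the paper itself omits. Each step checks out: the quasi-triangle inequality gives the recentering $B(x,R)\subseteq B(y,2A_0R)$ at the cost of a factor $2A_0$; choosing the minimal $k$ with $2A_0R\leq 2^k r$ gives $k\leq 1+\log_2(2A_0)+\log_2(R/r)$; iterating doubling $k$ times at the common center $y$ yields $\mu(B(x,R))\leq C_\mu^k\mu(B(y,r))$; and the identity $C_\mu^{-\log_2(R/r)}=(r/R)^{\log_2 C_\mu}$ extracts exactly the claimed power, with $C=C_\mu^{-(1+\log_2(2A_0))}$ depending only on $A_0$ and $C_\mu$, hence on $X$.
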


\begin{lemma}
A space of homogeneous type $X$ has $\mu(X) < \infty$ if and only if $\mu(X) = B(x,r)$ for some $x \in X$ and $r>0$.
\end{lemma}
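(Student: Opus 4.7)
I read the statement as asserting $X=B(x,r)$ (rather than $\mu(X)=B(x,r)$, which seems to be a typo), so my plan is to prove that $\mu(X)<\infty$ if and only if $X$ is itself a quasi-metric ball.

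The easy direction is $(\Leftarrow)$: if $X=B(x,r)$, then $\mu(X)=\mu(B(x,r))<\infty$ because balls in a space of homogeneous type have finite measure by definition of doubling. I would dispose of this in one line.

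For $(\Rightarrow)$, I would argue by contrapositive. Suppose $X\neq B(x,r)$ for every $x\in X$ and $r>0$; I claim this forces $X$ to have infinite diameter with respect to any base point. Indeed, if $\mathrm{diam}(X)<\infty$, then fixing $x_0$, the quasi-triangle inequality gives $X\subseteq B(x_0,R)$ for $R$ large enough, and the reverse containment is automatic. So if no ball equals $X$, there is a sequence $\{y_n\}\subset X$ with $d(x_0,y_n)\to\infty$.

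The main step is to thin out this sequence into disjoint balls of comparable relative size, then apply the lower mass bound (Lemma~\ref{LMB}) to produce a uniform lower bound on their measures. Concretely, I would pass to a subsequence (still called $y_n$) satisfying $d(x_0,y_{n+1})> 4A_0^2\,d(x_0,y_n)$ and set $r_n=d(x_0,y_n)/(2A_0)$. A direct computation using the quasi-triangle inequality twice shows that if $z\in B(y_n,r_n)\cap B(y_m,r_m)$ with $m>n$, then $d(x_0,y_m)\le (2A_0^2+A_0)\,d(x_0,y_n)$, contradicting the construction; hence the balls $B(y_n,r_n)$ are pairwise disjoint. Getting the constants right for disjointness in a quasi-metric (rather than a true metric) is the one place that requires care; this is the main obstacle.

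Having disjoint balls, I apply Lemma~\ref{LMB} with $R_n=2d(x_0,y_n)$, noting $y_n\in B(x_0,R_n)$ and $r_n/R_n=1/(4A_0)$. This gives
\[
\mu(B(y_n,r_n))\ \ge\ C\left(\tfrac{1}{4A_0}\right)^{\log_2 C_\mu}\mu(B(x_0,R_n))\ \ge\ c\,\mu(B(x_0,1))
\]
for all large $n$, since $\mu(B(x_0,R_n))\ge \mu(B(x_0,1))$ once $R_n\ge 1$. Summing over the disjoint balls yields
\[
\mu(X)\ \ge\ \sum_n \mu(B(y_n,r_n))\ =\ \infty,
\]
which contradicts $\mu(X)<\infty$. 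Thus $X=B(x,r)$ for some $x,r$, completing the proof.
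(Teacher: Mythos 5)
You correctly read past the typo: the lemma means $X=B(x,r)$, not $\mu(X)=B(x,r)$. Note that the paper does not actually prove this lemma; it defers to the cited reference (Lemma 1.9 of \cite{facts}), so there is no in-paper argument to compare against. Your proof is correct and self-contained. The reverse implication is immediate since balls have finite measure. For the forward direction, the reduction ``$X$ is not a ball $\Rightarrow$ $\diam(X)=\infty$'' is sound, since $\diam(X)=D<\infty$ forces $X=B(x_0,D+1)$ for any $x_0$; and unboundedness of $d(x_0,\cdot)$ then follows via the quasi-triangle inequality. The disjointness computation checks out: for $z\in B(y_n,r_n)\cap B(y_m,r_m)$ with $r_k=d(x_0,y_k)/(2A_0)$ one gets $d(x_0,z)\le (A_0+\tfrac12)d(x_0,y_n)$ and then $d(x_0,y_m)\le(2A_0^2+A_0)d(x_0,y_n)$, which contradicts $d(x_0,y_m)>4A_0^2\,d(x_0,y_n)$ since $A_0\ge1$. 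Applying Lemma~\ref{LMB} with $R_n=2d(x_0,y_n)$ gives $r_n/R_n=1/(4A_0)$, hence a uniform lower bound $\mu(B(y_n,r_n))\ge c\,\mu(B(x_0,1))>0$ for all $n$ with $R_n\ge1$, and summing the measures of infinitely many pairwise disjoint balls forces $\mu(X)=\infty$. This is exactly the kind of ``pigeonhole via lower mass bound'' argument one would expect, and it is valid.
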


The remainder of the lemmas in this section are facts which are well-known in $\mathbb{R}^n$. We omit proofs that are unchanged from their Euclidean case, which may be found in \cite{book, Diening}. We do, however, reproduce the proof of Lemma~\ref{normish}, as we later make reference to the constants implicit in the proof.

\begin{lemma} \label{normish} Given $\pp \in \mathcal{P}(X)$ with $p_+<\infty$, $\Norm{f}_\pp \leq C_1$ if and only if
\begin{equation} \int_X |f(x)|^{p(x)}d\mu \leq C_2. \label{modular} \end{equation}
Moreover, if one constant is equal to 1, we may take both to be.
\end{lemma}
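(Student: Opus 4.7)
The plan is to reduce the statement to the unit-level equivalence $\Norm{g}_\pp \leq 1 \iff \rho(g) \leq 1$ and then scale. The hypothesis $p_+ < \infty$ forces $\mu(X_\infty) = 0$, so the modular simplifies to $\rho(f) = \int_X |f(x)|^{p(x)}\,d\mu$, which clears the way for pointwise arguments on the integrand.

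For the unit-level equivalence, the backward direction is immediate from the Luxembourg definition of the norm. For the forward direction, suppose $\Norm{g}_\pp \leq 1$. If $\Norm{g}_\pp < 1$, there is some $\lambda < 1$ with $\rho(g/\lambda) \leq 1$, and monotonicity of the modular gives $\rho(g) \leq \rho(g/\lambda) \leq 1$. If $\Norm{g}_\pp = 1$, then $\rho(g/\lambda) \leq 1$ for every $\lambda > 1$; as $\lambda \downarrow 1$ the integrands $|g(x)|^{p(x)}\lambda^{-p(x)}$ increase pointwise to $|g(x)|^{p(x)}$, so the monotone convergence theorem yields $\rho(g) \leq 1$.

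The explicit constants in the general statement come from scaling. From $\Norm{f}_\pp \leq C_1$ one gets $\rho(f/C_1) \leq 1$, and bounding $C_1^{-p(x)}$ below by $C_1^{-p_+}$ when $C_1 \geq 1$ and by $C_1^{-p_-}$ when $C_1 \leq 1$ produces $C_2 = \max(C_1^{p_-}, C_1^{p_+})$. Conversely, from $\int |f|^{p(x)}\,d\mu \leq C_2$ the choices $\lambda = C_2^{1/p_-}$ (for $C_2 \geq 1$) and $\lambda = C_2^{1/p_+}$ (for $C_2 \leq 1$) verify $\rho(f/\lambda) \leq 1$ directly, so $\Norm{f}_\pp \leq \lambda$. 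Setting the input equal to $1$ on either side forces the output to equal $1$, which is the ``moreover'' clause.

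The only step requiring genuine care is the $\Norm{g}_\pp = 1$ case of the unit-level equivalence, where passing to the limit $\lambda \downarrow 1$ inside the modular is what uses $p_+ < \infty$; without that hypothesis the $L^\infty(X_\infty)$ piece of the modular would have to be tracked separately and the clean equivalence would be lost. Everything else amounts to bookkeeping with the exponents $p_\pm$, and the fact that the authors plan to reference the implicit constants later is simply the observation that the proof yields the explicit bounds $C_2 = \max(C_1^{p_-}, C_1^{p_+})$ and $C_1 = \max(C_2^{1/p_-}, C_2^{1/p_+})$.
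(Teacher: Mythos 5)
Your proof is correct, and it takes a somewhat cleaner route than the paper's. The paper handles the "$C_1=1 \Rightarrow C_2=1$" direction by first proving the crude a priori bound $\int_X |f|^{p(x)}\,d\mu \leq (C_1+1)^{p_+}$ (using $C_1+1$ to safely get strictly above the infimum defining the norm), and then invoking the \emph{dominated} convergence theorem with $|f|^{p(x)}$ as the dominating function along a sequence $\lambda_\epsilon \downarrow 1$. You instead isolate the unit-level equivalence $\Norm{g}_\pp \leq 1 \iff \rho(g)\leq 1$ and prove the delicate case $\Norm{g}_\pp = 1$ by the \emph{monotone} convergence theorem, observing that $|g(x)|^{p(x)}\lambda^{-p(x)} \uparrow |g(x)|^{p(x)}$ as $\lambda \downarrow 1$; this avoids any need for a priori integrability. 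You then recover the general constants by scaling through the unit-level statement, which incidentally yields the tighter explicit constants $C_2 = \max(C_1^{p_-},C_1^{p_+})$ and $C_1 = \max(C_2^{1/p_-},C_2^{1/p_+})$ versus the paper's $(C_1+1)^{p_+}$. Both proofs use the hypothesis $p_+<\infty$ in the same essential way (it kills the $L^\infty(X_\infty)$ part of the modular and makes the pointwise limit $\lambda^{-p(x)} \to 1$ finite), and both are valid; yours is structurally a bit more modular, the paper's a bit more direct.
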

\begin{proof}
Assume first that \eqref{modular} holds. Since $p_+<\infty$, we have that $\Norm{f}_{L^\infty(X_\infty)}=0$. Given $C_2 \leq 1$, we have $\rho(f/1) \leq 1$ and hence we may take $C_1 = 1$. If $C_2 \geq 1$, then we may divide to obtain
\[ \int_X \left|\frac{f(x)}{C_2^{1/p(x)}}\right|^{p(x)}d\mu \leq 1. \]
Now $C_2^{1/p(x)}$ is bounded by $C_1 \equiv C_2^{1/p_-}$, for which $\rho(f/C_1) \leq 1$ and so $\Norm{f}_\pp \leq C_1$.

Conversely, given $\Norm{f}_\pp \leq C_1$, then by the definition of the norm we get that
\[ 1 \geq \int_X\left|\frac{f(x)}{C_1+1}\right|^{p(x)}\,d\mu \geq \frac{1}{(C_1+1)^{p_+}}\int_X|f(x)|^{p(x)}\,d\mu, \]
and so that \eqref{modular} holds. If $C_1=1$, then for any $\epsilon>0$, there exists $\lambda_\epsilon \in [1,1+\epsilon)$ such that
\[ \rho(f/\lambda_\epsilon) = \int_X \left|\frac{f(x)}{\lambda_\epsilon}\right|^{p(x)}d\mu \leq 1. \]
Since the integrand is dominated by $|f(x)|^{p(x)}$, taking $\epsilon=1/n$ and applying the Dominated Convergence Theorem, we get that
\[ \int_X|f(x)|\,d\mu \leq 1, \]
and so $C_2$ may be taken to be 1.
\end{proof}

\begin{lemma} \label{unity}
Given $\pp \in \mathcal{P}(X)$ with $p_+<\infty$,
\[ \int_X \left(\frac{|f(x)|}{\Norm{f}_\pp}\right)^{p(x)}\,d\mu = 1. \]
In particular, if $\Norm{f}_\pp = 1$, then
\[ \int_X |f(x)|^{p(x)} d\mu = 1. \]
\end{lemma}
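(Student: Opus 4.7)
The plan is to reduce the general identity to the ``in particular'' statement via homogeneity of the Luxemburg norm, and then prove that if $\|f\|_\pp = 1$ the modular equals $1$ by combining Lemma~\ref{normish} with a rescaling argument.

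First I would note that if $\|f\|_\pp = \alpha > 0$, then $g = f/\alpha$ satisfies $\|g\|_\pp = 1$ by the homogeneity of the norm, so that
\[ \int_X \left(\frac{|f(x)|}{\|f\|_\pp}\right)^{p(x)} d\mu = \int_X |g(x)|^{p(x)} d\mu. \]
Thus the main content is to show that $\|f\|_\pp = 1$ implies $\rho(f) = 1$.

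Next I would obtain the upper bound. Since $\|f\|_\pp = 1$, Lemma~\ref{normish} (with $C_1 = 1$, so we may take $C_2 = 1$) gives $\rho(f) = \int_X |f(x)|^{p(x)}\,d\mu \leq 1$.

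The main obstacle is the reverse inequality, since the infimum in the definition of the Luxemburg norm is not \emph{a priori} attained. I would argue by contradiction: suppose $\rho(f) = c < 1$. Using $p_+ < \infty$, for any $\lambda \in (0,1)$ we have
\[ \rho(f/\lambda) = \int_X \lambda^{-p(x)} |f(x)|^{p(x)}\,d\mu \leq \lambda^{-p_+} \int_X |f(x)|^{p(x)}\,d\mu = \lambda^{-p_+} c. \]
Since $c < 1$, I can choose $\lambda \in (c^{1/p_+}, 1)$, which yields $\rho(f/\lambda) \leq \lambda^{-p_+}c < 1$, and hence $\|f\|_\pp \leq \lambda < 1$, contradicting $\|f\|_\pp = 1$. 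Therefore $\rho(f) = 1$, finishing the proof.

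The only real subtlety is the use of $p_+ < \infty$ to pull the factor $\lambda^{-p(x)}$ out of the integral with a uniform bound; without that hypothesis the rescaling argument would fail, so it is worth emphasizing where this assumption enters.
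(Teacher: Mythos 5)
Your proof is correct. The paper itself omits a proof of Lemma~\ref{unity}, deferring to the references \cite{book,Diening}, so there is no in-paper argument to compare against; but your argument is the standard one and is sound. The homogeneity reduction to $\|f\|_\pp = 1$, the upper bound $\rho(f)\le 1$ from Lemma~\ref{normish}, and the contradiction via the rescaling $\rho(f/\lambda)\le\lambda^{-p_+}\rho(f)$ (which is exactly where $p_+<\infty$ is needed) all go through. One could equivalently phrase the last step via continuity of $\lambda\mapsto\rho(f/\lambda)$, but your direct choice of $\lambda\in(c^{1/p_+},1)$ is cleaner. Implicitly one should assume $f\in L^\pp$ with $f\not\equiv 0$ so that $0<\|f\|_\pp<\infty$ and the quotient is meaningful; this is a harmless omission in the lemma's statement that you might flag, but your reduction handles it automatically.
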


\begin{lemma} \label{conditionalbound}
Let $\pp \in \mathcal{P}(X)$ be such that $p_+<\infty$. If $\Norm{f}_\pp \leq 1$, then
\[ \Norm{f}_\pp^{p_+} \leq \int_X |f(x)|^{p(x)} d\mu \leq \Norm{f}_\pp^{p_-}. \]
On the other hand, if $\Norm{f}_\pp \geq 1$, then
\[ \Norm{f}_\pp^{p_-} \leq \int_X |f(x)|^{p(x)} d\mu \leq \Norm{f}_\pp^{p_+}. \]
\end{lemma}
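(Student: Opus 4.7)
The plan is to reduce the statement to a direct application of Lemma~\ref{unity}, exploiting the monotonicity of the function $\lambda \mapsto \lambda^{p(x)}$ to pin down $\int_X |f|^{p(x)} d\mu$ by the extreme exponents $p_-$ and $p_+$. Set $\lambda = \Norm{f}_\pp$. If $\lambda = 0$ then $f = 0$ a.e.\ and both inequalities in the first case are trivially $0 \leq 0 \leq 0$ (using $p_+ < \infty$ so that $0^{p_+} = 0$); so I may assume $\lambda > 0$.

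By Lemma~\ref{unity},
\[
\int_X \left(\frac{|f(x)|}{\lambda}\right)^{p(x)} d\mu = 1, \qquad \text{i.e.,} \qquad \int_X \lambda^{-p(x)} |f(x)|^{p(x)} d\mu = 1.
\]
Now if $\lambda \leq 1$, then $\lambda^{-1} \geq 1$, so the map $t \mapsto \lambda^{-t}$ is non-decreasing on $[p_-, p_+]$, giving the pointwise bound $\lambda^{-p_-} \leq \lambda^{-p(x)} \leq \lambda^{-p_+}$ for a.e.\ $x$. Multiplying by $|f(x)|^{p(x)}$ and integrating yields
\[
\lambda^{-p_-} \int_X |f(x)|^{p(x)} d\mu \;\leq\; 1 \;\leq\; \lambda^{-p_+} \int_X |f(x)|^{p(x)} d\mu,
\]
which on rearrangement is exactly $\lambda^{p_+} \leq \int_X |f(x)|^{p(x)} d\mu \leq \lambda^{p_-}$.

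If instead $\lambda \geq 1$, then $\lambda^{-1} \leq 1$, so $t \mapsto \lambda^{-t}$ is non-increasing on $[p_-, p_+]$, giving $\lambda^{-p_+} \leq \lambda^{-p(x)} \leq \lambda^{-p_-}$ pointwise a.e. The same computation as above, with the inequalities reversed, produces $\lambda^{p_-} \leq \int_X |f(x)|^{p(x)} d\mu \leq \lambda^{p_+}$.

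There is no real obstacle here; the assumption $p_+ < \infty$ is used only to invoke Lemma~\ref{unity} and to handle the $\lambda = 0$ edge case cleanly. The only care needed is to track which direction of monotonicity applies in each case, which is controlled by whether $\lambda^{-1}$ is above or below $1$.
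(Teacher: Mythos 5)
Your proof is correct. The paper omits the proof of this lemma, referring instead to the standard references; the argument you give—applying Lemma~\ref{unity} to the normalized function $f/\Norm{f}_\pp$ and then bounding $\lambda^{-p(x)}$ pointwise by $\lambda^{-p_-}$ and $\lambda^{-p_+}$ according to whether $\lambda\leq 1$ or $\lambda\geq 1$—is exactly the standard textbook argument, and your handling of the $\lambda=0$ edge case (where Lemma~\ref{unity} cannot be invoked directly) is appropriate.
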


\begin{lemma} \label{density}
If $\pp \in \mathcal{P}(X)$ is such that $p_+<\infty$, bounded functions with support contained in $B_r(x_0)$ for some $r$ and $x_0$ (bounded support) are dense in $\Lpp$. Moreover, any nonnegative $f \in \Lpp$ is the limit of an increasing sequence of such functions.
\end{lemma}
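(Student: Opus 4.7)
The plan is to approximate $f \in \Lpp$ by truncating both its amplitude and its support. Fix a base point $x_0 \in X$ and, for each $n \in \mathbb{N}$, set
\[ f_n = f \cdot \chi_{B(x_0,n)} \cdot \chi_{\{|f|\leq n\}}. \]
Each $f_n$ is bounded by $n$ with support contained in $B(x_0,n)$, and $|f_n| \leq |f|$. For the second assertion, when $f \geq 0$ I would instead take $f_n = \min(f,n) \chi_{B(x_0,n)}$, which is manifestly an increasing sequence of nonnegative bounded functions with bounded support. Because $f \in \Lpp$ forces $|f| < \infty$ almost everywhere (pick $\lambda_0>0$ with $\rho(f/\lambda_0) < \infty$; then $|f/\lambda_0|^{p(x)}$ is finite a.e.) and $X = \bigcup_n B(x_0,n)$, we get $f_n(x) \to f(x)$ pointwise a.e.\ in both constructions.

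The key step is to promote pointwise convergence to convergence in the Luxemburg norm. Here I would exploit $p_+ < \infty$ to upgrade the finiteness of $\rho(f/\lambda_0)$ to $\rho(f/\lambda) < \infty$ for every $\lambda > 0$, via the pointwise bound
\[ \left(\frac{|f(x)|}{\lambda}\right)^{p(x)} \leq \max\!\left\{1, \left(\frac{\lambda_0}{\lambda}\right)^{p_+}\right\} \left(\frac{|f(x)|}{\lambda_0}\right)^{p(x)}. \]
Given $\epsilon > 0$, since $|f - f_n| \leq 2|f|$, the integrand $(|f - f_n|/\epsilon)^{p(x)}$ is dominated by the integrable function $(2|f|/\epsilon)^{p(x)}$ and decreases to $0$ pointwise a.e. The Dominated Convergence Theorem then yields $\rho((f-f_n)/\epsilon) \to 0$, and in particular $\rho((f-f_n)/\epsilon) \leq 1$ for $n$ large enough. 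Applying Lemma~\ref{normish} (in the form where $C_2 = 1$ forces $C_1 = 1$) to $(f-f_n)/\epsilon$ gives $\|f-f_n\|_\pp \leq \epsilon$, completing the proof.

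The main technical point is the passage from modular convergence at one scale to convergence in norm; this is the usual source of friction in variable-exponent arguments because the two notions are only comparable up to reparametrization. It is resolved cleanly by the observation that, under $p_+ < \infty$, the modular of a dilate $f/\lambda$ remains finite for every $\lambda > 0$ once it is finite for some $\lambda_0 > 0$, which is precisely what is needed to invoke Dominated Convergence at an arbitrarily small scale $\epsilon$.
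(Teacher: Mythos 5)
Your proposal is correct and follows essentially the same route as the paper: truncate the amplitude and the support, show $\rho(f/\lambda)<\infty$ for every $\lambda>0$ by splitting on whether the rescaling factor exceeds $1$ and using $p_+<\infty$, then apply dominated convergence to the modular at scale $\epsilon$ and invoke the equivalence of Lemma~\ref{normish}. The only cosmetic difference is that you truncate general $f$ directly via $\chi_{\{|f|\le n\}}$ while the paper first reduces to $f\ge 0$ and uses $\min\{f,n\}$ throughout; both choices give the required monotonicity in the nonnegative case.
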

\begin{proof}
All bounded functions of bounded support are in $\Lpp$, because they are bounded by constant functions on finite-measure domains and $p_+<\infty$. To prove that such functions are dense, choose $f \in L^\pp$ and let $\epsilon>0$. By decomposing $f$ as
\[ f(x) = f_+(x)-f_-(x), \]
with both $f_+,f_- \geq 0$, it suffices to consider the case $f(x) \geq 0$. Since $\Norm{f}_\pp<\infty$, there exists $\Lambda>0$ for which $\rho(f/\Lambda) \leq 1$. If $\Lambda/\epsilon \equiv \lambda \geq 1$, then
\[ \int_X \left|\frac{f(x)}{\epsilon}\right|^{p(x)}d\mu = \int_X \left|\frac{\lambda f(x)}{\Lambda}\right|^{p(x)}d\mu \leq \lambda^{p_+}\int_X \left|\frac{f(x)}{\Lambda}\right|^{p(x)}d\mu \leq \lambda^{p_+} < \infty. \]
If $\lambda < 1$, the same argument holds with $p_-$ replacing $p_+$. Thus $\rho(f/\epsilon)<\infty$. Now define (choosing some base point $x_0 \in X$)
\[ f_n(x) = \min\{f(x),n\}\chi_{B(x_0,n)}. \]
It is clear that $f_n \to f$ pointwise. Then by the dominated convergence theorem,
\[ \int_X \left|\frac{f(x)-f_n(x)}{\epsilon}\right|^{p(x)}d\mu \to 0, \]
since $\rho(f/\epsilon)$ is finite and $\left|\frac{f(x)}{\epsilon}\right|^{p(x)}$ dominates the above integrand. But then for $n$ large enough that the above integral is less than one, $\Norm{f-f_n}_\pp \leq \epsilon$. It follows that the $f_n$ converge to $f$ in $\Lpp$ and consequently that bounded functions of bounded support are dense. Finally, we have that $f_{n+1}(x) \geq f_n(x)$, so the sequence increases to $f$.
\end{proof}

\begin{lemma}[Monotone Convergence Theorem] \label{MCT}
Given an exponent $\pp \in \mathcal{P}(X)$, let $\{f_k\}_{k=1}^\infty$ be a sequence of non-negative measureable functions that increase pointwise almost everywhere to a function $f \in \Lpp$. Then $\Norm{f_k}_\pp \to \Norm{f}_\pp$. 
\end{lemma}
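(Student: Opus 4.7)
The plan is to exploit the definition of the Luxembourg norm as the infimum of $\lambda$ for which $\rho(f/\lambda)\le 1$, combined with classical MCT for the integral part of the modular and a direct argument for the $L^\infty$ part. Throughout, write $L=\lim_k\Norm{f_k}_\pp$.

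First, I would note that $0\le f_k\le f_{k+1}\le f$ a.e.\ implies pointwise that $|f_k/\lambda|^{p(x)}$ is increasing in $k$ and that $\|f_k/\lambda\|_{L^\infty(X_\infty)}\le\|f_{k+1}/\lambda\|_{L^\infty(X_\infty)}$, for every $\lambda>0$. Hence the modular $\rho(\cdot/\lambda)$ is monotone along the sequence, and from the infimum characterization of the norm one obtains $\Norm{f_k}_\pp\le\Norm{f_{k+1}}_\pp\le\Norm{f}_\pp$. Thus $L$ exists and satisfies $L\le\Norm{f}_\pp$.

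For the reverse inequality, I would show $\Norm{f}_\pp\le L$, which is trivial if $L=\infty$; so assume $L<\infty$ and fix any $\lambda>L$. Then for every $k$ we have $\lambda>\Norm{f_k}_\pp$, so by definition of the Luxembourg norm there exists $\mu_k\in(\Norm{f_k}_\pp,\lambda]$ with $\rho(f_k/\mu_k)\le 1$, and by monotonicity of $\rho$ in its argument $\rho(f_k/\lambda)\le\rho(f_k/\mu_k)\le 1$.

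The key step is to pass to the limit in $\rho(f_k/\lambda)\le 1$. On $X\setminus X_\infty$ the integrands $(f_k(x)/\lambda)^{p(x)}$ increase pointwise a.e.\ to $(f(x)/\lambda)^{p(x)}$, so the classical MCT gives
\[ \int_{X\setminus X_\infty}\bigl(f_k(x)/\lambda\bigr)^{p(x)}\,d\mu \;\uparrow\; \int_{X\setminus X_\infty}\bigl(f(x)/\lambda\bigr)^{p(x)}\,d\mu. \]
For the $L^\infty(X_\infty)$ piece, set $L_k=\Norm{f_k/\lambda}_{L^\infty(X_\infty)}$ and $L_\infty^*=\lim_k L_k\le\Norm{f/\lambda}_{L^\infty(X_\infty)}$; conversely, $f_k\le\lambda L_k\le\lambda L_\infty^*$ a.e.\ on $X_\infty$, and letting $k\to\infty$ yields $f\le\lambda L_\infty^*$ a.e.\ on $X_\infty$, so $\Norm{f/\lambda}_{L^\infty(X_\infty)}\le L_\infty^*$, i.e.\ equality. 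Combining the two parts, $\rho(f_k/\lambda)\uparrow\rho(f/\lambda)$, and thus $\rho(f/\lambda)\le 1$, which gives $\Norm{f}_\pp\le\lambda$. Letting $\lambda\downarrow L$ proves $\Norm{f}_\pp\le L$, and combined with the first step we conclude $\Norm{f_k}_\pp\to\Norm{f}_\pp$.

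The only subtle point is the handling of the $L^\infty(X_\infty)$ component of the modular, since the classical MCT does not directly address essential suprema; the argument above, using the sandwich $f_k\le\lambda L_k$ and pointwise passage to the limit, is the natural workaround. Everything else is a direct consequence of monotonicity of the modular and the infimum definition of the Luxembourg norm.
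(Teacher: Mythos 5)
Your proof is correct and follows the standard argument for variable Lebesgue spaces; the paper in fact omits this proof, deferring to the textbooks cited, and your argument is essentially the one found there. Both directions are handled cleanly: the easy inequality $L \leq \Norm{f}_\pp$ via monotonicity of the Luxembourg norm, and the reverse inequality by fixing $\lambda > L$, using that the admissible set $\{\mu : \rho(f_k/\mu)\leq 1\}$ is an upper interval with infimum $\Norm{f_k}_\pp < \lambda$ to get $\rho(f_k/\lambda)\leq 1$, and then passing to the limit in the modular. Your careful separate treatment of the $L^\infty(X_\infty)$ term (which classical MCT does not cover) via the sandwich $f_k \leq \lambda L_k \leq \lambda L_\infty^*$ a.e.\ is exactly the right workaround, and the countable union of null sets implicit in ``letting $k\to\infty$'' is harmless. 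One small remark: since the hypothesis already puts $f \in \Lpp$, the case $L=\infty$ never arises (you have $L \leq \Norm{f}_\pp < \infty$ from your first paragraph), so that branch is vacuous, though including it does no harm.
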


\begin{lemma}[H\"{o}lder's Inequality] \label{Holder} Given an exponent $\pp \in \mathcal{P}(X)$,
\[ \int_X |f(x)g(x)|\,d\mu \leq 4\Norm{f}_\pp\Norm{g}_\cpp \]
for any $f,g$.
\end{lemma}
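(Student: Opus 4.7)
The plan is to mimic the standard proof of Hölder's inequality in the classical variable Lebesgue setting, the only new wrinkle being that the space of homogeneous type does not change the modular-level argument at all. Both sides are positively homogeneous, so by passing to $f/\lambda_1$ and $g/\lambda_2$ for any $\lambda_1 > \|f\|_\pp$ and $\lambda_2 > \|g\|_\cpp$ and then letting $\lambda_i \searrow \|f\|_\pp, \|g\|_\cpp$, it suffices to establish the inequality $\int_X |fg|\,d\mu \leq 4$ under the assumption $\rho_\pp(f) \leq 1$ and $\rho_\cpp(g) \leq 1$ (the cases where a norm is $0$ or $\infty$ being trivial).

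Next I would split $X = X_1 \cup X_* \cup X_\infty$ using the definition of these sets from the preamble, noting that $X_1$ is the set where $p'(x) = \infty$ and $X_\infty$ is the set where $p'(x) = 1$. On $X_*$, classical Young's inequality $ab \leq a^{p(x)}/p(x) + b^{p'(x)}/p'(x)$ applied pointwise yields
\[
\int_{X_*}|fg|\,d\mu \leq \int_{X_*}|f(x)|^{p(x)}\,d\mu + \int_{X_*}|g(x)|^{p'(x)}\,d\mu \leq \rho_\pp(f) + \rho_\cpp(g) \leq 2.
\]
On $X_1$, the definition of the modular gives $\|g\chi_{X_1}\|_{L^\infty(X_1)} \leq \rho_\cpp(g) \leq 1$ and $\int_{X_1}|f|\,d\mu = \int_{X_1}|f|^{p(x)}\,d\mu \leq \rho_\pp(f) \leq 1$, so that $\int_{X_1}|fg|\,d\mu \leq 1$. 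By a symmetric argument on $X_\infty$, where $p(x) = \infty$ and $p'(x) = 1$, we get $\int_{X_\infty}|fg|\,d\mu \leq \|f\chi_{X_\infty}\|_{L^\infty(X_\infty)}\int_{X_\infty}|g|\,d\mu \leq 1$.

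Summing these three contributions gives $\int_X|fg|\,d\mu \leq 4$, and scaling back produces the claimed inequality. The only delicate point is the bookkeeping between $p$ and $p'$ on the degenerate sets $X_1$ and $X_\infty$ — once one unwinds the conjugate conventions $1' = \infty$ and $\infty' = 1$ together with the definition of the modular (which was carefully designed in the preamble to package together the $L^{p(x)}$ integral on $X\setminus X_\infty$ with the $L^\infty$ norm on $X_\infty$), everything collapses to the Euclidean argument verbatim, with no use of the metric or measure structure beyond the ability to integrate.
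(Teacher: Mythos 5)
Your proof is correct and is the standard modular-level argument (homogeneity reduction to $\rho_\pp(f),\rho_\cpp(g)\le 1$, then splitting $X=X_1\cup X_*\cup X_\infty$ and using Young's inequality on $X_*$ together with the $L^1$--$L^\infty$ pairing on each of the two degenerate pieces). The paper does not reproduce a proof of this lemma at all, citing \cite{book, Diening} instead, and the argument you give is exactly the one found there, so there is nothing further to compare.
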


The following two lemmas are stated erroneously in \cite[Lemmas 2.7,\,2.8]{paper}. For clarity, we provide the correct proofs here.
\begin{lemma} \label{DCU2.8} Given a set $G \subseteq X$ and two exponents $s(\cdot)$ and $\rr$ such that
\[ 0 \leq r(y)-s(y) \leq \frac{C_0}{\log(e+d(x_0,y))} \]
for each $y \in G$, then for every $t \geq 1$ there exists a constant $C=C(t,C_0)$ such that for all functions $f$,
\begin{equation}\label{tms} \int_G |f(y)|^{s(y)}dy \leq C\int_G |f(y)|^{r(y)}dy + \int_G\frac{1}{(e+d(x_0,y))^{ts_-(G)}}dy. \end{equation}
\end{lemma}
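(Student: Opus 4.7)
The plan is to split $G$ according to whether $|f(y)|$ exceeds the threshold $(e+d(x_0,y))^{-t}$. Writing $G_1 = \{y \in G : |f(y)| \geq (e+d(x_0,y))^{-t}\}$ and $G_2 = G \setminus G_1$, I aim to show that $|f(y)|^{s(y)}$ is directly bounded by the tail $(e+d(x_0,y))^{-ts_-(G)}$ on $G_2$, while on $G_1$ the log-H\"older-type gap hypothesis forces $|f(y)|^{s(y)} \leq e^{tC_0}\,|f(y)|^{r(y)}$. Adding the two estimates then yields \eqref{tms} with $C = e^{tC_0}$.

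For $y \in G_2$, the bound $|f(y)| < (e+d(x_0,y))^{-t} \leq e^{-t} \leq 1$ together with $s(y) \geq s_-(G)$ implies $|f(y)|^{s(y)} \leq |f(y)|^{s_-(G)} \leq (e+d(x_0,y))^{-ts_-(G)}$, since raising a number in $[0,1]$ to a larger exponent decreases it. Integrating over $G_2$ (and trivially enlarging the domain to $G$) then produces exactly the tail integral in \eqref{tms}.

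For $y \in G_1$ I further split into two sub-cases. When $|f(y)| \geq 1$, the inequality $|f(y)|^{s(y)} \leq |f(y)|^{r(y)}$ is immediate from $s(y) \leq r(y)$. When $|f(y)| < 1$, I factor $|f(y)|^{s(y)} = |f(y)|^{r(y)} \cdot |f(y)|^{-(r(y)-s(y))}$ and use the definition of $G_1$ to obtain $|f(y)|^{-(r(y)-s(y))} \leq (e+d(x_0,y))^{t(r(y)-s(y))}$; the log-H\"older gap estimate then gives $(e+d(x_0,y))^{t(r(y)-s(y))} \leq (e+d(x_0,y))^{tC_0/\log(e+d(x_0,y))} = e^{tC_0}$. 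Integrating over $G_1$ therefore contributes $e^{tC_0}\int_G |f(y)|^{r(y)}\,d\mu$, and combining with the $G_2$ estimate finishes the proof. The only step requiring genuine thought is the last conversion of a $1/\log$-type decay into a uniform exponential constant; every other piece is a direct comparison.
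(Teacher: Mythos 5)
Your proof is correct and follows essentially the same route as the paper: decompose $G$ by the threshold $(e+d(x_0,y))^{-t}$, bound the small-$f$ part by the tail integral, and on the large-$f$ part convert the gap $r(y)-s(y)$ into the uniform constant $e^{tC_0}$ via the identity $a^{1/\log a}=e$. The only cosmetic difference is that you split $G_1$ into the sub-cases $|f|\geq 1$ and $|f|<1$; the paper handles both at once by raising $|f(y)|\geq (e+d(x_0,y))^{-t}$ to the non-positive power $s(y)-r(y)$, which immediately yields $|f(y)|^{s(y)-r(y)}\leq (e+d(x_0,y))^{t(r(y)-s(y))}$ without the dichotomy.
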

\begin{proof}
Let $G' = \{y \in G\,:\,|f(y)| \geq (e+d(x_0,y))^{-t}\}$. Then decomposing the domain of the left integral in the inequality into $G'$ and $G \setminus G'$, we see that since $(e+d(x_0,y))^{-t} \leq 1$,
\[ \int_{G \setminus G'}|f(y)|^{s(y)}dy \leq \int_{G \setminus G'}(e+d(x_0,y))^{-ts(y)} dy \leq \int_G \frac{1}{(e+d(x_0,y))^{ts_-(G)}}dy. \]
If $y \in G'$, then
\begin{multline*} |f(y)|^{s(y)} = |f(y)|^{r(y)}|f(y)|^{s(y)-r(y)} \leq |f(y)|^{r(y)}(e+d(x_0,y))^{t(r(y)-s(y))} \\ \leq |f(y)|^{r(y)}(e+d(x_0,y))^{C_0t/\log(e+d(x_0,y))} \leq C|f(y)|^{r(y)}. \end{multline*}
The desired inequality follows.
\end{proof}

\begin{lemma} \label{DCU2.7} Given a set $G \subseteq X$ and two exponents $s(\cdot)$ and $r(\cdot)$ such that
\[ |r(y)-s(y)| \leq \frac{C_0}{\log(e+d(x_0,y))} \]
for each $y \in  G$, then for every $t \geq 1$ there exists a constant $C=C(t,C_0)$ such that \eqref{tms} holds for all functions $f$ with $|f(y)| \leq 1$.
\end{lemma}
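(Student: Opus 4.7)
The plan is to reduce Lemma~\ref{DCU2.7} to Lemma~\ref{DCU2.8} by splitting $G$ according to the sign of $r(y)-s(y)$ and exploiting the hypothesis $|f(y)| \leq 1$ on the complementary piece.

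First, I would partition $G = G_1 \cup G_2$, where
\[ G_1 = \{y \in G : r(y) \geq s(y)\}, \qquad G_2 = \{y \in G : r(y) < s(y)\}. \]
On $G_1$ the hypothesis $|r(y)-s(y)| \leq C_0/\log(e+d(x_0,y))$ reduces to $0 \leq r(y)-s(y) \leq C_0/\log(e+d(x_0,y))$, which is exactly the hypothesis of Lemma~\ref{DCU2.8} applied to the set $G_1$. Thus for some constant $C=C(t,C_0)$,
\[ \int_{G_1} |f(y)|^{s(y)}\,dy \leq C\int_{G_1} |f(y)|^{r(y)}\,dy + \int_{G_1} \frac{1}{(e+d(x_0,y))^{ts_-(G_1)}}\,dy. \]
Since $G_1 \subseteq G$ we have $s_-(G_1) \geq s_-(G)$, so the last integrand is pointwise bounded by $(e+d(x_0,y))^{-ts_-(G)}$, and the bound extends to an integral over all of $G$.

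On $G_2$ we have $s(y) > r(y)$, and the hypothesis $|f(y)| \leq 1$ gives the pointwise inequality $|f(y)|^{s(y)} \leq |f(y)|^{r(y)}$. Therefore
\[ \int_{G_2} |f(y)|^{s(y)}\,dy \leq \int_{G_2} |f(y)|^{r(y)}\,dy. \]
Adding the two estimates and bounding each of $\int_{G_1}$ and $\int_{G_2}$ of $|f|^{r(y)}$ by the integral over $G$ yields
\[ \int_G |f(y)|^{s(y)}\,dy \leq (C+1)\int_G |f(y)|^{r(y)}\,dy + \int_G \frac{1}{(e+d(x_0,y))^{ts_-(G)}}\,dy, \]
which is the desired inequality (after absorbing $C+1$ into a new constant).

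There is no real obstacle here; the only subtlety is making sure the ``tail'' integral from Lemma~\ref{DCU2.8} on $G_1$ can be replaced by the tail integral on $G$ with the exponent $s_-(G)$, which is clear since passing to a larger set and a smaller essential infimum of the exponent only increases the bound. The role of the assumption $|f| \leq 1$ is precisely to handle $G_2$, where the inequality from Lemma~\ref{DCU2.8} fails in general; monotonicity of $u \mapsto |f|^u$ on $[0,\infty)$ for $|f|\leq 1$ does all the work there.
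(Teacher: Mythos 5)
Your proof is correct and takes essentially the same approach as the paper's: split $G$ according to the sign of $r-s$, apply Lemma~\ref{DCU2.8} on the piece where $r \geq s$, and use monotonicity of $u \mapsto |f|^u$ (valid since $|f|\leq 1$) on the complementary piece. One minor point of comparison: on the set where $s(y) > r(y)$, the paper performs a further split according to whether $|f(y)| \geq (e+d(x_0,y))^{-t}$ (as in the proof of Lemma~\ref{DCU2.8}) and handles each sub-piece separately, whereas you observe---correctly---that $|f| \leq 1$ already yields $|f|^{s(y)} \leq |f|^{r(y)}$ pointwise on the whole piece, making that extra split unnecessary. Your version is thus slightly cleaner without changing the substance of the argument. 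Your remark about replacing $s_-(G_1)$ by $s_-(G)$ in the tail integral is also correct, since $e+d(x_0,y) > 1$ and $s_-(G_1) \geq s_-(G)$.
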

\begin{proof}
Define the two sets $A=\{y \in G\,:\,r(y) \geq s(y)\}$ and $B=G \setminus A$. Lemma~\ref{DCU2.8} takes care of $A$. For $B$, construct $B' = \{y \in B\,:\,|f(y)| \geq (e+d(x_0,y))^{-t}\}$ and observe that the $B \setminus B'$ component holds as in the previous proof. But since $|f(y)| \leq 1$,
\[ |f(y)|^{s(y)} = |f(y)|^{r(y)}|f(y)|^{s(y)-r(y)} \leq |f(y)|^{r(y)}. \]
Since $C \geq 1$, this proves the inequality.
\end{proof}

Our proof of the final lemma in this section is based on the proof of \cite[Lemma 3.1]{Adamowicz}.

\begin{lemma} \label{DCU2.9} Given an exponent $\pp \in \text{LH}$, for all balls $B \subseteq X$,
\[ \mu(B)^{p_-(B)-p_+(B)} \leq C. \]
\end{lemma}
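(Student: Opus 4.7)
Since $p_-(B) - p_+(B) \leq 0$, the inequality is trivial when $\mu(B) \geq 1$, so the task reduces to bounding $(p_+(B) - p_-(B)) \log(1/\mu(B)) \leq C$ in the remaining case $\mu(B) < 1$. My plan is to quantify $\log(1/\mu(B))$ in terms of the radius $r$ of $B$ and the basepoint distance $s := d(x, x_0)$ via the Lower Mass Bound (Lemma~\ref{LMB}), and then to absorb each resulting logarithmic factor using $\LLH$ and $\GLH$ in complementary regimes.

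Writing $B = B(x, r)$ and observing that $B(x_0, 1) \subset B(x, A_0(1+s))$, applying Lemma~\ref{LMB} to this outer ball will yield
\[
\mu(B) \;\geq\; C\,\frac{r^\alpha}{(1+s)^\alpha}, \qquad \alpha := \log_2 C_\mu,
\]
as long as $r \leq A_0(1+s)$; in the opposite regime $B \supset B(x_0,1)$, and $\mu(B)$ is already bounded below by a uniform positive constant, so the lemma follows at once. Taking logs gives $\log(1/\mu(B)) \leq C' + \alpha \log(1/r) + \alpha \log(1+s)$, and I then split $(p_+(B) - p_-(B)) \log(1/\mu(B))$ across these three summands.

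For the $\alpha\log(1/r)$ summand: when $r < 1/(4A_0)$ we have $\diam B < 1/2$, so $\LLH$ applied to any $y, z \in B$ gives $p_+(B) - p_-(B) \leq C_0/\log(1/(2A_0 r))$, and the ratio $\log(1/r)/\log(1/(2A_0 r))$ is uniformly bounded; for $r \geq 1/(4A_0)$ the factor $\log(1/r)$ is itself bounded. For the $\alpha\log(1+s)$ summand: when $s \geq 2A_0 r$ each $y \in B$ satisfies $d(y,x_0) \geq s/(2A_0)$, so $\GLH$ yields $p_+(B) - p_-(B) \leq 2C_\infty/\log(e + s/(2A_0))$, again cancelling $\log(1+s)$ up to a uniformly bounded ratio; when $s < 2A_0 r$ and $r$ is small, $\log(1+s)$ is already bounded directly. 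The constant summand is controlled trivially via $p_+(B) - p_-(B) \leq p_+ - p_- < \infty$.

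The main technical obstacle is the remaining regime where $B$ has large radius but sits close to the basepoint (explicitly $s < 2A_0 r$ with $r \geq 1/(4A_0)$), since here neither log-Hölder condition forces $p_+(B) - p_-(B)$ to vanish. The resolution is that for such balls the lower bound $\mu(B) \geq Cr^\alpha/(1+s)^\alpha$ combined with $1+s \leq 1+2A_0 r$ and $r \geq 1/(4A_0)$ gives $\mu(B) \geq c > 0$ uniformly (alternatively, whenever $s \leq r/(2A_0)$ a quasi-triangle computation gives $B(x_0, r/(2A_0)) \subset B$ directly), so $\log(1/\mu(B))$ is bounded and the trivial estimate on $p_+(B) - p_-(B)$ closes the argument.
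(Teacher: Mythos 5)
Your proof is correct, and it takes a genuinely different route from the paper's. The paper's argument fixes the center $y_0$ of $B$, reduces to bounding $\mu(B)^{-|p(x)-p(y_0)|}$, then splits on $r \geq 1$ versus $r < 1$: for $r\ge 1$ it uses the assumption $\mu(B) < C_1$ to show geometrically that $B_0 \cap 2A_0 B = \varnothing$, deduces $d(x_0,y_0) > 2A_0 r$, and invokes $\GLH$; for $r<1$ it factors $\mu(B)^{-1} \lesssim r^{-\alpha}\mu(r^{-1}B)^{-1}$, controls the $r^{-\alpha}$ piece by $\LLH$, and implicitly reduces the $\mu(r^{-1}B)$ piece back to the $r\ge 1$ case. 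Your argument instead applies the Lower Mass Bound once to get the uniform estimate $\mu(B) \gtrsim r^\alpha/(1+s)^\alpha$ with $s = d(x,x_0)$, takes logs to decompose $\log(1/\mu(B))$ into three additive pieces, and matches each piece to a complementary regime: $\LLH$ kills $\alpha\log(1/r)$ when $r$ is small and $\GLH$ kills $\alpha\log(1+s)$ when $s\gtrsim r$, with the residual regime (large $r$, $s \lesssim r$) ruled out because the same lower mass bound then forces $\mu(B)$ to be bounded below. This is cleaner in that it avoids the recursive reduction from $B$ to $r^{-1}B$, makes explicit which $\LH$ hypothesis is doing what, and replaces the paper's geometric separation argument ($B_0 \cap 2A_0 B = \varnothing$) with the more transparent observation that a ball with large radius near the basepoint simply cannot have small measure. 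One minor remark: your case split across the three summands is slightly asymmetric, since in the final regime you do not bound the $\log(1+s)$ summand per se but rather observe that the entire quantity $(p_+(B)-p_-(B))\log(1/\mu(B))$ is controlled; this is harmless but worth stating explicitly in a final write-up. You should also note, as the paper does, that $p_\pm(B)$ are essential extrema, so a limiting argument (using the continuity of $\pp$ guaranteed by $\LLH$) is needed to pass from pointwise bounds on $|p(y)-p(z)|$ to bounds on $p_+(B)-p_-(B)$.
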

\begin{proof}
Fix $B=B(y_0,r)$ and define $B_0=B(x_0,1)$, where $x_0$ is the $\text{LH}_\infty$ condition base point. Also let $k = 2\ceil{\log_24A_0}+2$ and $C_1 = \mu(B_0)/C_\mu^k$. We will show that for any $x,y \in B$,
\[ \mu(B)^{-|p(x)-p(y)|} \leq C; \]
a simple limiting argument shows that this is equivalent to the stated form, by continuity of $\pp$.
If $\mu(B) \geq \min\{1,C_1\}$ then
\[ \mu(B)^{-|p(x)-p(y)|} \leq \min\{1,C_1\}^{-|p(x)-p(y)|} \leq \min\{1,C_1\}^{-p_+}, \]
since $\pp \in \LH$ implies $|p(x)-p(y_0)| \leq p_+ < \infty$. Thus we may assume $\mu(B) \leq \min\{1,C_1\}$.

We begin by asserting that we need only prove the inequality when one of the points is the center point $y_0$ of $B$. If this is not the case, then
\[ \mu(B)^{-|p(x)-p(y)|} = \mu(B)^{-|p(x)-p(y_0)+p(y_0)-p(y)|} \leq \mu(B)^{-|p(x)-p(y_0)|-|p(y)-p(y_0)|}, \]
and so it suffices to prove that
\[ \mu(B)^{-|p(x)-p(y_0)|} \leq C \]
for any $x \in B$.

We consider first the case where $r \geq 1$. For any $y \in r^{-1}B$, we have that
\[ d(x_0,y) \leq A_0(d(x_0,y_0) + d(y_0,y)) \leq A_0(1+d(x_0,y_0)). \]
Consequently $r^{-1}B \subseteq A_0(1+d(x_0,y_0))B_0$, and so by the lower mass bound (Lemma~\ref{LMB}),
\begin{multline*} \mu(B_0) \leq \mu(A_0(1+d(x_0,y_0))B_0) \\ \leq C(1+d(x_0,y_0))^{\log_2C_\mu}\mu(r^{-1}B) \leq C(1+d(x_0,y_0))^{\log_2C_\mu}\mu(B). \end{multline*}
Dividing by $\mu(B)$ and raising to the power of $|p(x)-p(y_0)|$, we get
\begin{equation} \mu(B)^{-|p(x)-p(y_0)|} \leq C(1+d(x_0,y_0))^{\log_2C_\mu|p(x)-p(y_0)|}. \label{Abound} \end{equation}
To estimate the quantity on the right of \eqref{Abound}, we argue that $B_0 \cap 2A_0B = \varnothing$. If to the contrary there exists $y \in B_0 \cap 2A_0B$, then for any $z \in B_0$ we have
\[ d(y_0,z) \leq A_0(d(y_0,y)+d(y,z)) \leq 2A_0(1+A_0r) \leq 4A_0^2r, \]
since $A_0,r \geq 1$. Consequently $B_0 \subseteq 4A_0^2B$. From the doubling condition,
\[ \mu(B) \geq \mu(4A_0^2B)/C_\mu^k \geq \mu(B_0)/C_\mu^k = C_1, \]
contrary to assumption. Hence the claim is true; in particular $x_0 \not\in 2A_0B$, so $d(x_0,y_0) > 2A_0r$. By the quasi-triangle inequality,
\[ d(x_0,y_0) \leq A_0(d(x_0,x) + d(x_0,x)) \leq A_0(r+d(y_0,x)). \]
Since $d(x_0,y_0) > 2A_0r$ we have that $d(x_0,x) > r$ and so $d(x_0,y_0) \leq 2A_0d(x_0,x)$. Thus
\[ (1+d(x_0,y_0))^{|p(x)-p(y_0)|} \leq (1+d(x_0,y_0))^{|p(y_0)-p_\infty|}(1+2A_0d(x_0,x))^{|p(x)-p_\infty|}. \]
That this is bounded by a constant is implied by the $\GLH$ condition and the fact that the function
\begin{equation} f(x) = \frac{\log(e+ax)}{\log(e+bx)} \label{function} \end{equation}
is bounded on $x>0$ by $\frac{a}{b}$ when $a>b>0$. This completes the case when $r \geq 1$.

If $r < 1$, we argue much as before with the lower mass bound to obtain
\[ \mu(B)^{-|p(x)-p(y_0)|} \leq Cr^{-|p(x)-p(y_0)|}\mu(r^{-1}B)^{-|p(x)-p(y_0)|}; \]
The $r^{-|p(x)-p(y_0)|}$ term is bounded by the $\LLH$ condition. 

\end{proof}


\section{The $\App$ Condition} \label{Ap}
In this section we develop the $\App$ condition in spaces of homogeneous type. Our first lemma characterizes various properties of $A_\infty$ weights. For a proof, see \cite[Chapter I, Theorem 15]{S&T}.
\begin{lemma} \label{Ainfty} Given a weight $W$, the following are equivalent.
\begin{itemize}
\item $W \in A_\infty = \bigcup_{p \geq 1} A_p$.
\item There exist constants $\epsilon > 0$ and $C_2 > 1$ such that given any ball $B$ and any measurable set $E \subseteq B$,
\[ \frac{\mu(E)}{\mu(B)} \leq C_2\left(\frac{W(E)}{W(B)}\right)^\epsilon. \]
\item $W$ is doubling (in the sense that the measure $\nu$ given by $d\nu(x) = W(x)\,d\mu(x)$ is doubling) and there exist constants $\delta > 0$ and $C_1 > 1$ such that given any ball $B$ and any measurable set $E \subseteq B$,
\[ \frac{W(E)}{W(B)} \leq C_1\left(\frac{\mu(E)}{\mu(B)}\right)^\delta. \]
\end{itemize}
\end{lemma}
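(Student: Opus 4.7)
The plan is to prove the cycle of implications $(1) \Rightarrow (2) \Rightarrow (3) \Rightarrow (1)$, with the principal obstacle being the final step, which requires a reverse Hölder inequality of Gehring type. The implications out of (1) are straightforward applications of Hölder's inequality; the nontrivial work is in recovering an $A_p$ constant from the purely qualitative comparability encoded in (2) or (3).

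For $(1) \Rightarrow (2)$, assume $W \in A_p$ for some $p \geq 1$. Given a ball $B$ and $E \subseteq B$, write
\[ \frac{\mu(E)}{\mu(B)} = \avgint_B \bigl(\chi_E W^{1/p}\bigr)\, W^{-1/p}\, d\mu \]
and apply Hölder's inequality with conjugate exponents $p$ and $p'$, then absorb the $W^{-1/(p-1)}$ factor using the defining $A_p$ condition
\[ \avgint_B W \cdot \Bigl(\avgint_B W^{-1/(p-1)}\Bigr)^{p-1} \leq [W]_{A_p}. \]
This yields $\mu(E)/\mu(B) \leq C\bigl(W(E)/W(B)\bigr)^{1/p}$, which is (2) with $\epsilon = 1/p$. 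The case $p=1$ is even simpler, using the pointwise $A_1$ bound $W(x) \geq C\,\essinf_B W$ directly.

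For $(2) \Rightarrow (3)$, I would first extract doubling of $W$ by applying (2) to $B \subseteq 2B$: this gives $\mu(B)/\mu(2B) \leq C_2\bigl(W(B)/W(2B)\bigr)^\epsilon$, and since $\mu$ is doubling, $\mu(B)/\mu(2B) \geq 1/C_\mu$, so $W(2B) \leq C W(B)$. For the power-type growth estimate, the standard tool is a Calderón-Zygmund stopping-time decomposition with respect to $W$ on the Christ-type dyadic cubes to be constructed in Section~\ref{cubes}. Starting from $B$, one stops at subcubes where the $W$-average jumps by a fixed factor over its parent; condition (2) forces the $\mu$-measure of the corresponding stopping region to decay geometrically with generation, and summing the resulting geometric series yields the desired bound $W(E)/W(B) \leq C_1\bigl(\mu(E)/\mu(B)\bigr)^\delta$.

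The main obstacle is $(3) \Rightarrow (1)$, and the standard route is through a reverse Hölder inequality: condition (3) should imply that there exist $s > 1$ and $C > 0$ with $\bigl(\avgint_B W^s\bigr)^{1/s} \leq C \avgint_B W$ for every ball $B$. This is the analog of Gehring's lemma on a space of homogeneous type, which I would prove by a good-$\lambda$ / Calderón-Zygmund stopping-time argument on the Christ cubes, using the lower mass bound (Lemma~\ref{LMB}) to control geometric constants in place of Euclidean scaling. Once the reverse Hölder estimate is in hand, one chooses $p > 1$ so that $p' < s$ and bounds $\bigl(\avgint_B W^{-1/(p-1)}\bigr)^{p-1}$ via Hölder's inequality combined with the reverse Hölder inequality, recovering $[W]_{A_p} < \infty$. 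The genuine work lies in adapting the Euclidean Gehring machinery to the quasi-metric setting; the rest of the argument is essentially formal.
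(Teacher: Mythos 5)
The paper does not prove this lemma at all: it cites Str\"omberg and Torchinsky, \cite[Chapter I, Theorem 15]{S&T}. So your from-scratch sketch is solving a harder problem than the paper sets itself, and the comparison is really between your sketch and the classical argument it is trying to reproduce. The implication $(1)\Rightarrow(2)$ is fine as you have it. The other two steps have genuine gaps.

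In $(2)\Rightarrow(3)$, the logic attributing geometric decay of $\mu(\Omega_k)$ to condition (2) is off. Stopping at cubes $Q$ where $\avgint_Q W > K^k\avgint_{Q_0}W$ gives $\mu(\Omega_k)\leq K^{-k}\mu(Q_0)$ by Chebyshev alone; (2) plays no role there. Moreover, the naive sum $W(E) \leq \sum_k K^{k+1}(\avgint_{Q_0}W)\,\mu(E\cap\Omega_k)$ does not converge with only the Chebyshev decay. What you actually need is geometric decay of $W(\Omega_k)$, and that is where (2) enters, applied to complements: if $\mu(\Omega_{k+1}\cap Q)/\mu(Q)\leq 1/K$ for a stopping cube $Q$ of generation $k$, then applying (2) to $Q\setminus\Omega_{k+1}\subseteq Q$ gives $W(Q\setminus\Omega_{k+1})\geq c_0W(Q)$, so $W(\Omega_{k+1}\cap Q)\leq(1-c_0)W(Q)$; iterating yields $W(\Omega_k)\leq(1-c_0)^kW(Q_0)$, and then optimizing over $k$ in $W(E)\leq W(\Omega_k)+CK^k(\avgint_{Q_0}W)\mu(E)$ gives the power bound. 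The complement trick is not a cosmetic omission; without it the argument fails.

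In $(3)\Rightarrow(1)$, the claim that the reverse H\"older inequality for $W$ with respect to $d\mu$, ``combined with H\"older's inequality,'' bounds $\avgint_B W^{-1/(p-1)}\,d\mu$ is false, and this is the serious gap. RHI for $W$ gives no upper control whatsoever on negative powers of $W$: one can shrink $W$ to be arbitrarily small on a small subset of $B$ while leaving $(\avgint_B W^s)^{1/s}$ and $\avgint_B W$ essentially unchanged, so $\avgint_B W^{-1/(p-1)}$ can be made as large as desired. (This is also why local $\pp$-integrability of $W^{-1}$ is not automatic from RHI for $W$.) The correct route to $(1)$ passes through the comparability condition again: $(3)$ implies there are $\alpha,\beta\in(0,1)$ with $\mu(E)\leq\alpha\mu(B)\Rightarrow W(E)\leq\beta W(B)$; taking complements turns this into the same type of condition for the weight $W^{-1}$ with respect to the reference measure $dW$; and running the Gehring stopping-time machinery a second time, now over $(X,dW)$, produces a reverse H\"older inequality for $W^{-1}$ in $L^r(dW)$ for some $r>1$. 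Unwinding $\bigl(\avgint_B W^{-r}\,dW\bigr)^{1/r}\leq C\avgint_B W^{-1}\,dW$ is exactly the $A_p$ condition for $p=r'$. So the Gehring machinery must be invoked twice, once with respect to $d\mu$ and once with respect to $dW$; a single RHI for $W$ cannot close the loop. Given that the full argument (including a Christ-cube Gehring lemma adapted to the quasi-metric setting, run twice with two different reference measures) is substantial and not novel, the paper's decision to cite \cite{S&T} rather than reprove is the right call.
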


To utilize the properties described in Lemma~\ref{Ainfty}, we will use the $\App$ condition to construct a weight $W$ in $A_\infty$. To do so, we require the following lemmas.

\begin{lemma} \label{normbound} Given an exponent $\pp \in \mathcal{P}(X)$, if $w \in \App$, then there exists a constant $C$ depending on $\pp$ and $w$ such that given any ball $B$ and any measurable set $E \subset B$,
\[ \frac{\mu(E)}{\mu(B)} \leq C\frac{\Norm{w\chi_E}_\pp}{\Norm{w\chi_B}_\pp}. \]
\end{lemma}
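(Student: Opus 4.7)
The plan is short: the inequality should follow directly from H\"older's inequality (Lemma~\ref{Holder}) together with the $\App$ condition (Definition~\ref{App}), so the only ``trick'' is choosing the right factorization.

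First, write
\[ \mu(E) = \int_X \chi_E(x)\,d\mu = \int_X \bigl(w(x)\chi_E(x)\bigr)\bigl(w(x)^{-1}\chi_E(x)\bigr)\,d\mu. \]
Applying Lemma~\ref{Holder} with the factors $w\chi_E \in \Lpp$ and $w^{-1}\chi_E \in \Lqq$ (the remark after Definition~\ref{App} ensures both are finite), this gives
\[ \mu(E) \leq 4\Norm{w\chi_E}_\pp\Norm{w^{-1}\chi_E}_\cpp. \]
Since $E \subseteq B$, monotonicity of the Luxemburg norm yields $\Norm{w^{-1}\chi_E}_\cpp \leq \Norm{w^{-1}\chi_B}_\cpp$.

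Next, invoke the $\App$ condition on the ball $B$: $\Norm{w\chi_B}_\pp\Norm{w^{-1}\chi_B}_\cpp \leq [w]_\App \mu(B)$. Using the remark after Definition~\ref{App} again, $\Norm{w\chi_B}_\pp$ is finite and positive (so division is legal), so we may rearrange to get
\[ \Norm{w^{-1}\chi_B}_\cpp \leq \frac{[w]_\App\,\mu(B)}{\Norm{w\chi_B}_\pp}. \]
Combining this with the H\"older estimate and dividing through by $\mu(B)$ yields
\[ \frac{\mu(E)}{\mu(B)} \leq 4[w]_\App\,\frac{\Norm{w\chi_E}_\pp}{\Norm{w\chi_B}_\pp}, \]
which is the claim with $C = 4[w]_\App$. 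There is no real obstacle here; the only point requiring a moment of care is justifying that $\Norm{w\chi_B}_\pp$ is strictly positive so that the rearrangement above is legitimate, which follows since $w$ is positive a.e.\ on the ball $B$ of positive measure.
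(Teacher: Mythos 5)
Your proof is correct and uses essentially the same argument as the paper: factor $\chi_E = (w\chi_E)(w^{-1}\chi_E)$, apply H\"older, and rearrange the $\App$ condition on $B$. (The paper writes the factorization as $(w\chi_E)(w^{-1}\chi_B)$, which is the same thing since $E\subset B$; your extra step invoking monotonicity of the norm to pass from $\chi_E$ to $\chi_B$ is a negligible difference. Minor typo: you wrote $\Lqq$ where you meant $L^{\cpp}$.)
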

\begin{proof} Fix $B$ and $E \subset B$. By H\"{o}lder's inequality and the $\App$ condition (Definition~\ref{App}),
\begin{multline*} \mu(E) = \int_X w(x)\chi_Ew(x)^{-1}\chi_B\,d\mu \\ = C\Norm{w\chi_E}_\pp\Norm{w^{-1}\chi_B}_\cpp \leq C\Norm{w\chi_E}_\pp\Norm{w\chi_B}^{-1}_\pp\mu(B). \end{multline*}
\end{proof}

\begin{lemma} \label{fracexp}
Given an exponent $\pp \in LH$ and a weight $w \in \App$, there exists a constant $C_0$ depending on $\pp$, $w$, and $X$ such that for all balls $B$,
\[ \Norm{w\chi_B}_\pp^{p_-(B)-p_+(B)} \leq C_0. \]
\end{lemma}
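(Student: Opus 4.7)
Plan: Set $\lambda := \Norm{w\chi_B}_\pp$. The argument will split into cases on the size of $\lambda$. If $\lambda \geq 1$, since $p_-(B) - p_+(B) \leq 0$, we immediately have $\lambda^{p_-(B) - p_+(B)} \leq 1$, so the inequality holds with $C_0 = 1$.

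The substantive case is $\lambda < 1$, where the plan is to establish a uniform lower bound of the form $\lambda \geq c\,\mu(B)^K$ for constants $c, K > 0$ depending on $\pp$, $w$, and $X$ but not on $B$. Once that is in hand, writing $\alpha := p_+(B) - p_-(B) \in [0, p_+]$ and invoking Lemma~\ref{DCU2.9} gives
\[
\lambda^{p_-(B) - p_+(B)} = \lambda^{-\alpha} \leq c^{-\alpha}\mu(B)^{-K\alpha} = c^{-\alpha}\bigl(\mu(B)^{p_-(B)-p_+(B)}\bigr)^{K} \leq \max(1, c^{-p_+})\,C^{K},
\]
which is the desired bound (using that $\alpha \leq p_+ < \infty$ to absorb the $c^{-\alpha}$ factor).

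To produce the lower bound, I would combine H\"older's inequality (Lemma~\ref{Holder}) with the $\App$ condition (Definition~\ref{App}) to obtain the two-sided comparison
\[
\mu(B)/4 \;\leq\; \lambda\,\Norm{w^{-1}\chi_B}_\cpp \;\leq\; [w]_\App\,\mu(B),
\]
which converts the problem into upper-bounding $\Norm{w^{-1}\chi_B}_\cpp$ by a mild power of $\mu(B)$. For balls contained in a fixed large ball $B_0$ centered at the LH$_\infty$ base point $x_0$, the Remark following Definition~\ref{App} gives $w^{-1}\chi_{B_0}\in L^\cpp$, so monotonicity of the norm in the set yields $\Norm{w^{-1}\chi_B}_\cpp\leq\Norm{w^{-1}\chi_{B_0}}_\cpp$, a fixed finite constant; combined with $\mu(B)\leq\mu(B_0)$ and a Lemma~\ref{LMB}-type comparison of $\mu(B)$ with a power of the radius, this produces the desired $\lambda\gtrsim\mu(B)^K$. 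For balls escaping $B_0$, I would use LH$_\infty$ (invoking Lemma~\ref{basepoint} to recentre if necessary) to observe that $p_+(B) - p_-(B)$ is already small for such balls, which permits a cruder bound on $\Norm{w^{-1}\chi_B}_\cpp$ while still yielding the required estimate after being raised to the power $\alpha$.

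The main obstacle I anticipate is obtaining the uniform lower bound $\lambda\geq c\,\mu(B)^K$ across all regimes: the small-ball case (where LLH and local $\cpp$-integrability of $w^{-1}$ drive the estimate) and the large-ball case (where LH$_\infty$ forces $p_+(B) - p_-(B)$ to be small enough to absorb any blow-up in $\Norm{w^{-1}\chi_B}_\cpp$) must be glued together. This gluing is the only place where the constant $C_0$ genuinely depends on $w$ beyond $[w]_\App$, since it is fed by the finite quantity $\Norm{w^{-1}\chi_{B_0}}_\cpp$.
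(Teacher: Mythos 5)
Your overall strategy is the same as the paper's: reduce to $\Norm{w\chi_B}_\pp < 1$, combine H\"older's inequality with the $\App$ condition to relate $\Norm{w\chi_B}_\pp$ to $\mu(B)$, and finish with Lemma~\ref{DCU2.9}; and the case $B \subseteq B_0$ is handled correctly, with the lower bound $\Norm{w\chi_B}_\pp \geq \mu(B)/(4\Norm{w^{-1}\chi_{B_0}}_\cpp)$ following immediately from H\"older. However, there is a genuine gap in the claim that $p_+(B) - p_-(B)$ is already small for every ball $B$ escaping $B_0$. This is false: take $B = B(x_0, R)$ with $R$ large. Such a ball is not contained in $B_0 = B(x_0,1)$, and yet $p_+(B) - p_-(B) \to p_+ - p_-$ as $R \to \infty$, which need not be small. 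The $\GLH$ condition controls $|p(x) - p_\infty|$ only where $d(x_0,x)$ is large; a ball that merely fails to sit inside $B_0$ can still put most of its mass near $x_0$.

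The cure is to split the ``escaping'' regime further, as the paper does, into two sub-cases. For balls that are large and near $x_0$ (say $r > 1$ and $d(x_0, x_c(B)) \lesssim r$), you do not need $p_+(B)-p_-(B)$ to be small at all: a bounded dilate $CB$ then contains $B_0$, so monotonicity plus the $\App$ condition on $CB$ plus doubling give $\Norm{w^{-1}\chi_B}_\cpp \lesssim \mu(CB)/\Norm{w\chi_{CB}}_\pp \lesssim \mu(B)/\Norm{w\chi_{B_0}}_\pp$, and hence $\Norm{w\chi_B}_\pp \gtrsim \Norm{w\chi_{B_0}}_\pp$, a fixed positive constant; the exponent $\alpha \leq p_+$ then does no harm. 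Only for balls whose centers lie genuinely far from $x_0$ relative to their radius does your $\GLH$ argument apply, and there it must be paired with a quantitative comparison of $\mu(B)$ against the enveloping ball $B(x_0, Cd(x_0,x_c(B)))$ via the lower mass bound, so that the growth of that ratio is killed by the smallness of $\alpha$. Without the intermediate sub-case, the proposed uniform lower bound $\Norm{w\chi_B}_\pp \gtrsim \mu(B)^K$ cannot be made to hold for all balls.
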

\begin{proof}
Our proof is reminiscent of the proof of Lemma~\ref{DCU2.9}. Fix $B=B(y_0,r)$ and define $B_0=B(x_0,1)$. If $\Norm{w\chi_B}_\pp \geq 1$, then $\Norm{w\chi_B}_\pp^{p_-(B)-p_+(B)} \leq 1$, so we may assume that $\Norm{w\chi_B}_\pp<1$. We consider three cases; first, suppose $r \leq 1$ and $d(x_0,y_0) \leq 2A_0$. By the quasi-triangle inequality (Definition~\ref{quasi}), for any point $y \in B$, we have that
\[ d(x_0,y) \leq A_0(d(y_0,y) + d(x_0,y_0)) \leq A_0(r + 2A_0) \leq A_0(1+2A_0), \]
and so
\[ B \subseteq A_0(1+2A_0)B_0 \equiv B_1. \]
If we apply H\"{o}lder's inequality, the lower mass bound on $B_0$ and $B_1$, and the $\App$ condition, we get
\begin{multline} \mu(B) = \int_B w(x)w(x)^{-1}\,d\mu  \leq C\Norm{w\chi_B}_\pp\Norm{w^{-1}\chi_{B}}_\cpp  \\ \leq C\Norm{w\chi_B}_\pp\Norm{w^{-1}\chi_{B_1}}_\cpp (2A_0)^{\log_2C_\mu}\frac{\mu(B_0)}{\mu(B_1)}  \leq C\Norm{w\chi_B}_\pp\Norm{w\chi_{B_1}}^{-1}_\pp.\label{eq:1} \end{multline}
Here the constant depends on both $X$ and $x_0$. After rearranging, raising to the power $p_-(B)-p_+(B)$, and applying Lemma \ref{DCU2.9}, we obtain
\[ \Norm{w\chi_B}_\pp^{p_-(B)-p_+(B)} \leq C\mu(B)^{p_-(B)-p_+(B)}\Norm{w\chi_{B_q}}^{p_-(B)-p_+(B)} \leq  C(1+\Norm{w\chi_{B_1}}_\pp^{-1})^{p_+-p_-}, \]
which is a bound independent of $B$.

Consider now the case where $r > 1$ and $d(x_0,y_0) \leq 2A_0r$. Applying the quasi-triangle inequality as before,
\[ B_0 \subseteq A_0(1+2A_0r)B \equiv B_2. \]
Using H\"{o}lder's inequality and the $\App$ condition as in the previous case,
\begin{multline} \mu(B) 
\leq C\Norm{w\chi_B}_\pp\Norm{w^{-1}\chi_B}_{\cpp} \leq C\Norm{w\chi_B}_\pp\Norm{w^{-1}\chi_{B_2}}_\cpp \\ \leq C\Norm{w\chi_B}_\pp\mu(B_2)\Norm{w\chi_{B_2}}_\pp^{-1} \leq C\Norm{w\chi_B}_\pp\mu(B)\Norm{w\chi_{B_0}}_\pp^{-1}.\label{eq:2} \end{multline}
Thus
\[ \Norm{w\chi_B}_\pp^{p_-(B)-p_+(B)} \leq C(1+\Norm{w\chi_{B_0}}_\pp^{-1})^{p_+-p_-}. \]

Consider now the remaining case, namely when $d(x_0,y_0)>2A_0\max\{1,r\}$. Let $d=2A_0d(x_0,y_0)$ so that $B,B_0 \subseteq B(x_0,d) \equiv B_3$. Arguing as we did in inequality \eqref{eq:1} (if $1 \geq r$) or \eqref{eq:2} (if $r>1$) with $B_3$ in place of $B_1$ or $B_2$, we get
\[ \mu(B) \leq C\mu(B_3)\Norm{w\chi_B}_\pp\Norm{w\chi_{B_3}}_\pp^{-1}. \]
In order to bring $\mu(B_3)$ into the constant as in the previous cases and obtain the corresponding inequality, we need
\[ \mu(B_3)^{p_+(B)-p_-(B)} \leq C. \]
To see that this is the case, observe that $\pp$ is continuous (since it is in $\LLH$) and so there exist $y_1,y_2 \in \overline{B}$ for which $p(y_1)=p_-(B)$ and $p(y_2)=p_+(B)$. And since 
\[d(x_0,y_0) \leq A_0(d(x_0,y_k)+r) \leq A_0d(x_0,y_k)+\frac{1}{2}d(x_0,y_0)\] for $k=1,2$, we have that $d(x_0,y_k) \geq (2A_0)^{-1}d(x_0,y_0)$, so the $\GLH$ condition implies
\[ p_+(B)-p_-(B) \leq |p(y_1)-p_\infty| + |p(y_2)-p_\infty| \leq \frac{C}{\log(e+(2A_0)^{-1}d(x_0,y_0))}. \]
Using this together with the lower mass bound,
\[ \mu(B_3) \leq Cd^{\log_2C_\mu}\mu(B_0) = C(2A_0d(x_0,y_0))^{\log_2C\mu}\mu(B_0) \leq C(e+A_0d(x_0,y_0))^{log_2C_\mu}, \]
we get that
\begin{multline*} \mu(B_3)^{p_+(B)-p_-(B)} \leq \left[C(e+A_0d(x_0,y_0))^{log_2C\mu}\right]^{C/\log(e+(2A_0)^{-1}d(x_0,y_0))} \\ \leq Ce^{C\log_2C_\mu\log(e+A_0d(x_0,y_0))/\log(e+(2A_0)^{-1}d(x_0,y_0))} \leq Ce^{C\log_2C_\mu}. \end{multline*}
This last inequality is from the bound on \eqref{function}. Since the above bound is independent of $B$,
\[ [\mu(B)\Norm{w\chi_{B_3}}_\pp]^{p_+(B)-p_-(B)} \leq C \Norm{w\chi_B}_\pp^{p_+(B)-p_-(B)}. \]
If we apply Lemma \ref{DCU2.9} on the left and the bound just derived on the right, we obtain
\[ \Norm{w\chi_B}^{p_-(B)-p_+(B)}_\pp \leq C(1+\Norm{w\chi_{B_0}}^{-1})^{p_+-p_-}. \]

\end{proof}

We can now prove the following lemma, which will allow us to apply Lemma~\ref{Ainfty} to weights in variable exponent spaces.

\begin{lemma} \label{Ap to Ainfty}
Given an exponent $\pp \in LH$ and a weight $w \in \App$, we have that $W(\cdot) = w(\cdot)^\pp \in A_\infty$.
\end{lemma}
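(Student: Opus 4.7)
The plan is to verify the second equivalent condition in Lemma~\ref{Ainfty} with $\epsilon = 1/p_+$; that is, to show that for every ball $B$ and every measurable $E \subseteq B$,
\[
\frac{\mu(E)}{\mu(B)} \leq C \left(\frac{W(E)}{W(B)}\right)^{1/p_+}.
\]
Applying Lemma~\ref{normbound} and raising both sides to the $p_+$-power reduces this to
\[
\left(\frac{\Norm{w\chi_E}_\pp}{\Norm{w\chi_B}_\pp}\right)^{p_+} \leq C\,\frac{W(E)}{W(B)}.
\]
Setting $\lambda_F := \Norm{w\chi_F}_\pp$, Lemma~\ref{conditionalbound} (applied to $w\chi_F$, which vanishes off $F$) relates $\lambda_F$ to $W(F) = \int_F w^{p(x)}\,d\mu$: $\lambda_F^{p_+(F)} \leq W(F) \leq \lambda_F^{p_-(F)}$ when $\lambda_F \leq 1$, with reversed inequalities when $\lambda_F \geq 1$. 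Lemma~\ref{fracexp} provides the crucial bridge $\lambda_B^{p_-(B)-p_+(B)} \leq C_0$, which in the small-norm regime sharpens $W(B) \leq \lambda_B^{p_-(B)}$ into $W(B) \leq C_0\,\lambda_B^{p_+(B)}$.

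I would then split into three cases based on the sizes of $\lambda_E \leq \lambda_B$ relative to $1$. In Case (i), $\lambda_B \leq 1$: combining the sharpened estimate for $W(B)$ with $W(E) \geq \lambda_E^{p_+(E)}$ reduces the claim to $\lambda_E^{p_+-p_+(E)} \leq C\,\lambda_B^{p_+-p_+(B)}$, which follows from the monotonicity of $\lambda \mapsto \lambda^a$ on $(0,1]$, together with $\lambda_E \leq \lambda_B$ and $p_+(E) \leq p_+(B) \leq p_+$. Case (ii), $\lambda_E \leq 1 < \lambda_B$, is immediate, since the inequality then compares a nonnegative power of a quantity $\leq 1$ with a nonnegative power of a quantity $\geq 1$.

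The main obstacle is Case (iii), $1 < \lambda_E \leq \lambda_B$, where Lemma~\ref{fracexp} is vacuous. My plan is to first establish the dual sharpening $W(B) \leq C'\,\lambda_B^{p_-(B)}$, equivalent to the uniform bound $\lambda_B^{p_+(B)-p_-(B)} \leq C'$ in this regime. I would split further on whether $\lambda_B \leq K\mu(B)$, in which case Lemma~\ref{DCU2.9} gives $\mu(B)^{p_+(B)-p_-(B)} \leq C$ and hence the desired bound; or $\lambda_B > K\mu(B)$, in which case the $\App$ inequality of Definition~\ref{App} forces $\Norm{w^{-1}\chi_B}_\cpp \leq 1$, and Lemma~\ref{fracexp} applied to $w^{-1} \in A_\cpp$ (available by the symmetry of Definition~\ref{App}) together with Lemma~\ref{DCU2.9} closes this subcase. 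With $W(B) \asymp \lambda_B^{p_-(B)}$ in hand, the inequality reduces to $\lambda_E^{p_+-p_-(E)} \leq C\,\lambda_B^{p_+-p_-(B)}$, which holds by monotonicity on $[1,\infty)$ together with $\lambda_E \leq \lambda_B$ and $p_-(B) \leq p_-(E)$.
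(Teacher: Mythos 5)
Your Cases (i) and (ii), where $\lambda_B \leq 1$ or $\lambda_E \leq 1 \leq \lambda_B$, are correct and closely parallel the paper's first two cases: both reduce to the ratio $\Norm{w\chi_E}_\pp/\Norm{w\chi_B}_\pp$ via Lemma~\ref{normbound}, then convert norms to modulars via Lemma~\ref{conditionalbound}, using Lemma~\ref{fracexp} to absorb the gap between $p_-(B)$ and $p_+(B)$.

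Your Case (iii) diverges from the paper, and it has a genuine gap. The pivotal intermediate claim is $\lambda_B^{p_+(B)-p_-(B)} \leq C'$ whenever $\lambda_B > 1$, but this is \emph{false} in general: take $X=\R^n$ with Lebesgue measure and $w=1$ (so $\lambda_B = \Norm{\chi_B}_\pp$), with $\pp$ nonconstant near the base point, say $p(x_0)=2$ and $p_\infty = 3$. For $B = B(x_0,R)$ with $R$ large, $p_+(B)-p_-(B)$ stays bounded away from zero (since $B$ always contains $x_0$), while $\lambda_B \geq \mu(B)^{1/p_+(B)} \to \infty$, so $\lambda_B^{p_+(B)-p_-(B)}\to\infty$. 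Your subcase argument is also internally flawed in two places. First, Lemma~\ref{DCU2.9} asserts $\mu(B)^{p_-(B)-p_+(B)} \leq C$ (a bound useful when $\mu(B)$ is small); you quote it as giving $\mu(B)^{p_+(B)-p_-(B)} \leq C$, which is the reciprocal and is false for large balls by the same example. Second, in the subcase $\lambda_B > K\mu(B)$ you invoke Lemma~\ref{fracexp} for $w^{-1}\in A_\cpp$, but that requires $\cpp \in \LH$, hence $p_->1$; this is not a hypothesis of the lemma being proved, and it is precisely the kind of step the paper explicitly flags as a gap in earlier work (see the discussion after Conjecture~\ref{conj}).

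The paper handles this regime very differently: it does not attempt to compare $p_+(B)$ with $p_-(B)$ at all. Instead it works with the single exponent $p_\infty$, applying Lemma~\ref{DCU2.7} to the modulars $\int_B W(x)\lambda^{-p(x)}\,d\mu$ and $\int_B W(x)\lambda^{-p_\infty}\,d\mu$ and controlling the error term $\int_X W(x)(e+d(x_0,x))^{-tp_\infty}\,d\mu$ for a large fixed $t$ by a dyadic-annulus estimate that uses only Lemma~\ref{normbound} and the lower mass bound. This yields $\Norm{w\chi_B}_\pp \approx W(B)^{1/p_\infty}$ whenever $\Norm{w\chi_B}_\pp\geq 1$ (Corollary~\ref{cor}), from which the desired $A_\infty$ inequality follows directly and without ever needing $\cpp \in \LH$. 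To repair your Case (iii) you would need to adopt something like this $p_\infty$-based strategy rather than trying to bound $\lambda_B^{p_+(B)-p_-(B)}$.
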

\begin{proof}
Fix a ball $B$ and a measurable set $E \subseteq B$. We will show that
\begin{equation}\label{pplus} \frac{\mu(E)}{\mu(B)} \leq C\left(\frac{W(E)}{W(B)}\right)^{1/p_+}, \end{equation}
which by Lemma \ref{Ainfty} is sufficient to show $W(\cdot) \in A_\infty$. We will prove this in three cases. Consider first the case that $\Norm{w\chi_B}_\pp \leq 1$. By Lemma \ref{normbound},
\begin{align*} \frac{\mu(E)}{\mu(B)} &\leq C\frac{\Norm{w\chi_E}_\pp}{\Norm{w\chi_B}_\pp} \leq C\frac{\Norm{w\chi_E}_\pp}{\Norm{w\chi_B}_\pp^{p_-(B)/p_+(B)}\Norm{w\chi_B}_\pp^{1-p_-(B)/p_+(B)}}. \end{align*}
If we appeal to Lemma \ref{conditionalbound} for the inequalities $\Norm{w\chi_E}_\pp \leq W(E)^{1/p_+(B)}$ and $\Norm{w\chi_B}_\pp^{p_-(B)} \geq W(E)$, then apply Lemma \ref{fracexp} on the remaining term, we get that
\[ \frac{\mu(E)}{\mu(B)} \leq C\left(\frac{W(E)}{W(B)}\right)^{1/p_+}\Norm{w\chi_B}^{p_-(B)/p_+(B)-1} \leq C\left(\frac{W(E)}{W(B)}\right)^{1/p_+}. \]
Now considering the case $\Norm{w\chi_E}_\pp \leq 1 \leq \Norm{w\chi_B}_\pp$, by the same lemmas as before,
\begin{multline*} \frac{\mu(E)}{\mu(B)} \leq C\frac{\Norm{w\chi_E}_\pp}{\Norm{w\chi_B}_\pp} \leq C\frac{\Norm{w\chi_E}_\pp}{\Norm{w\chi_B}_\pp^{p_-(B)/p_+(B)}\Norm{w\chi_B}_\pp^{1-p_-(B)/p_+(B)}} \\ \leq C\frac{W(E)^{1/p_+}}{\Norm{w\chi_B}^{p_-(B)/p_+(B)}\Norm{w\chi_B}^{1-p_-(B)/p_+(B)}},
\end{multline*}
which, given $\Norm{w\chi_B}_\pp \geq 1$ and $p_+ \geq p_+(B)$, yields
\[ \frac{\mu(E)}{\mu(B)} \leq C\left(\frac{W(E)}{W(B)}\right)^{1/p_+}. \]
The third case is $\Norm{w\chi_E}_\pp \geq 1$. Let $\lambda=\Norm{w\chi_B}_\pp \geq \Norm{w\chi_E}$. Since $\pp \in \GLH$, by Lemma \ref{DCU2.7} with $d\mu$ replaced by $W(x)\,d\mu$, for all $t>1$ there exists a constant $C_t$ for which
\begin{equation} \int_B \frac{W(x)}{\lambda^{p_\infty}}\,d\mu \leq C_t\int_B\frac{W(x)}{\lambda^{p(x)}}\,d\mu + \int_B\frac{W(x)}{(e+d(x_0,x))^{tp_\infty}}\,d\mu. \label{thirdcase} \end{equation}
The first integral on the right hand side is less than 1 by Lemma \ref{unity}. We claim that the same is true of the second term for sufficiently large $t$ independent of $B$. This is obvious if $W(X) < \infty$, since
\[ \int_X\frac{W(x)}{(e+d(x_0,x))^{tp_\infty}}\,d\mu \leq Ce^{-tp_\infty}W(X), \]
which may be made arbitrarily small. If on the other hand $W(X)=\infty$, let $B_k=B(x_0,2^k)$. Then by Lemma \ref{conditionalbound},
\begin{align*} \int_X\frac{W(x)}{(e+d(x_0,x))^{tp_\infty}}d\mu &\leq e^{-tp_\infty}W(B_0)+C\sum_{k=1}^\infty \int_{B_k \setminus B_{k-1}}\frac{W(x)}{(e+d(x_0,x))^{tp_\infty}}\,d\mu \\ &\leq e^{-tp_\infty}W(B_0)+C\sum_{k=1}^\infty 2^{-ktp_\infty}W(B_k) \\ &\leq e^{-tp_\infty}W(B_0)+C\sum_{k=1}^\infty 2^{-ktp_\infty}\max\bigg\{\Norm{w\chi_{B_k}}_\pp^{p_+},\Norm{w\chi_{B_k}}_\pp^{p_-}\bigg\} \\
&\leq e^{-tp_\infty}W(B_0) + C\sum_{k=1}^\infty 2^{-ktp_\infty}\Norm{w\chi_{B_k}}_\pp^{p_+}.
\end{align*}
The last inequality comes from the fact that $\Norm{w\chi_{B_k}}_\pp>1$ for all $k$ sufficiently large, by continuity of the measure $dW=W(x)\,d\mu$ and the fact that $X = \bigcup_{k=1}^\infty B_k$. By Lemma \ref{normbound},
\[ \Norm{w\chi_{B_k}}_\pp \leq C\frac{\mu(B_k)}{\mu(B_0)}\Norm{w\chi_{B_0}}_\pp \leq C2^{k\log_2C_\mu}. \]
Combining these two estimates yields
\begin{equation} \label{tsum} \int_X \frac{W(x)}{(e+d(x_0,x))^{tp_\infty}}d\mu \leq e^{-tp_\infty}W(B_0)+C\sum_{k=1}^\infty 2^{kp_+\log_2C_\mu-ktp_\infty}. \end{equation}
For $t>p_\infty/\log_2C_\mu^{p_+}$ the sum converges, and choosing $t$ sufficiently large (independent of $B$) makes the right hand side less than 1. Thus the right hand side of \eqref{thirdcase} is bounded, and so we may rearrange to obtain
\begin{equation}\label{sumboundop} W(B)^{1/p_\infty} \leq C\Norm{w\chi_B}_\pp. \end{equation}

Now repeating the argument switching $B$ with $E$ and $\pp$ with $p_\infty$, we get
\[ 1 \leq \int_E \frac{W(x)}{\lambda^{p(x)}}\,d\mu \leq C_t\int_E\frac{W(x)}{\lambda^{-p_\infty}}\,d\mu + \int_E\frac{W(x)}{(e+d(x_0,x))^{tp_\infty}}\,d\mu. \]
As before, we can make the rightmost term less than $1/2$, so that
\begin{equation} \lambda^{p_\infty} = \Norm{w\chi_E}_\pp^{p_\infty} \leq CW(E).  \label{sumbound} \end{equation}
Then by Lemma \ref{normbound},
\[ \frac{\mu(E)}{\mu(B)} \leq C\frac{\Norm{w\chi_E}_\pp}{\Norm{w\chi_Bs}_\pp} \leq C\left(\frac{W(E)}{W(B)}\right)^{1/{p_\infty}} \leq C\left(\frac{W(E)}{W(B)}\right)^{1/{p_+}}. \]
\end{proof}

From the latter stages of the proof of Lemma~\ref{Ap to Ainfty}, we may pull the following corollary.
\begin{corollary} \label{cor}
Given an exponent $\pp \in \LH$, if $w \in \App$ is a weight satisfying $\Norm{w\chi_B}_\pp \geq 1$ on a ball $B$, then $\Norm{w\chi_B}_\pp \approx W(B)^{1/p_\infty}$.
\end{corollary}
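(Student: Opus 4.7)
The corollary is essentially a repackaging of the two-sided estimates hidden in the third case of the proof of Lemma~\ref{Ap to Ainfty}, so the plan is to extract both inequalities by applying Lemma~\ref{DCU2.7} twice, once with $s(\cdot)\equiv p_\infty$ and $r(\cdot)=p(\cdot)$, and once with those roles reversed. Throughout, set $\lambda=\Norm{w\chi_B}_\pp\geq 1$ and take $f\equiv 1/\lambda$, which is the constant function bounded by $1$ required by Lemma~\ref{DCU2.7}. The hypothesis $\pp\in\LH$ supplies the needed modulus of continuity $|p(y)-p_\infty|\leq C_\infty/\log(e+d(x_0,y))$.

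For the inequality $W(B)^{1/p_\infty}\leq C\Norm{w\chi_B}_\pp$, I would apply Lemma~\ref{DCU2.7} with the measure $d\mu$ replaced by $W\,d\mu$, $s(\cdot)\equiv p_\infty$, $r(\cdot)=p(\cdot)$, and $G=B$, yielding
\[ \int_B \lambda^{-p_\infty} W(x)\,d\mu \leq C_t\int_B \lambda^{-p(x)} W(x)\,d\mu+\int_B\frac{W(x)}{(e+d(x_0,x))^{tp_\infty}}\,d\mu. \]
The first term on the right equals $C_t$ by Lemma~\ref{unity}, and the tail is bounded as in the proof of Lemma~\ref{Ap to Ainfty} (the case split on $W(X)$ finite/infinite, dyadic annuli $B_k=B(x_0,2^k)$, and Lemma~\ref{normbound} giving $\Norm{w\chi_{B_k}}_\pp\leq C2^{k\log_2 C_\mu}$) by choosing $t$ large enough so that the series \eqref{tsum} converges to less than a fixed constant. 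Rearranging yields $W(B)/\lambda^{p_\infty}\leq C$, which is the desired bound.

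For the reverse inequality $\Norm{w\chi_B}_\pp\leq CW(B)^{1/p_\infty}$, I would swap the roles of $s$ and $r$ in Lemma~\ref{DCU2.7}, applying it with $s(\cdot)=p(\cdot)$ and $r(\cdot)\equiv p_\infty$ (still with $f\equiv 1/\lambda$ and weighted measure $W\,d\mu$), producing
\[ \int_B \lambda^{-p(x)}W(x)\,d\mu \leq C_t\int_B \lambda^{-p_\infty}W(x)\,d\mu+\int_B\frac{W(x)}{(e+d(x_0,x))^{tp_-(B)}}\,d\mu. \]
The left side equals $1$ exactly, by Lemma~\ref{unity} applied to $\Norm{w\chi_B}_\pp=\lambda$. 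The tail term is again controlled uniformly in $B$ by choosing $t$ large (here one uses $p_-(B)\geq p_-\geq 1$ to reduce to the same global tail estimate as before), so that it is at most $1/2$. The remaining bound gives $\tfrac12\leq C_tW(B)/\lambda^{p_\infty}$, hence $\lambda^{p_\infty}\leq CW(B)$.

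The main technical obstacle is ensuring that the tail bounds are uniform in $B$: the argument must show that the single choice of $t$ works for every ball simultaneously, which is precisely why the derivation of \eqref{tsum} relies only on the global quantities $p_+$, $p_-$, $p_\infty$, $C_\mu$, and $\Norm{w\chi_{B_0}}_\pp$. Once that uniformity is in hand, the two inequalities combine to give $\Norm{w\chi_B}_\pp\approx W(B)^{1/p_\infty}$ whenever $\Norm{w\chi_B}_\pp\geq 1$.
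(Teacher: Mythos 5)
Your proposal is correct and follows the paper's own route: the corollary is extracted from the third case of the proof of Lemma~\ref{Ap to Ainfty} by applying Lemma~\ref{DCU2.7} twice with $f\equiv 1/\Norm{w\chi_B}_\pp$ against the measure $W\,d\mu$, once with $s(\cdot)\equiv p_\infty$, $r(\cdot)=p(\cdot)$ and once with the roles reversed, then controlling the tail uniformly in $B$ via the series estimate \eqref{tsum}. You are in fact slightly more careful than the paper, which in the second application writes $tp_\infty$ in the tail where Lemma~\ref{DCU2.7} literally gives $tp_-(B)$; your observation that $p_-(B)\geq p_-$ reduces this to the same global tail estimate.
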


We conclude this section with a lemma that will allow for the reduction from our main result to the unweighted case.

\begin{lemma} If $\pp \in \LH$ and $p_- > 1$, then $1 \in \App$. \end{lemma}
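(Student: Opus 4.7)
The plan is to establish $\Norm{\chi_B}_\pp\Norm{\chi_B}_\cpp \leq C\mu(B)$ for every ball $B$, which is exactly the $\App$ condition for $w\equiv 1$. Since the behavior of the characteristic function norms differs sharply between small and large balls, I would split into cases according to whether $\mu(B) \leq 1$ or $\mu(B) \geq 1$.

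For $\mu(B) \leq 1$, a direct modular computation suffices. Setting $\lambda = \mu(B)^{1/p_+(B)} \leq 1$, the inequality $(1/\lambda)^{p(y)} \leq (1/\lambda)^{p_+(B)}$ gives $\rho(\chi_B/\lambda) \leq (1/\lambda)^{p_+(B)}\mu(B) = 1$, so $\Norm{\chi_B}_\pp \leq \mu(B)^{1/p_+(B)}$. The symmetric argument, using $(p')_+(B) = p_-(B)/(p_-(B)-1)$, yields $\Norm{\chi_B}_\cpp \leq \mu(B)^{(p_-(B)-1)/p_-(B)}$. Multiplying, the product equals $\mu(B) \cdot \mu(B)^{-(p_+(B)-p_-(B))/(p_-(B)p_+(B))}$. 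Since $p_-(B)p_+(B) \geq 1$ and $\mu(B) \leq 1$, the excess factor is dominated by $\mu(B)^{-(p_+(B)-p_-(B))}$, which is $\leq C$ by Lemma~\ref{DCU2.9}.

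For $\mu(B) \geq 1$, the naive estimates produce an excess factor that grows with $\mu(B)$, so I would instead reduce to the constant exponent $p_\infty$ via Lemma~\ref{DCU2.7}. Set $\lambda = \mu(B)^{1/p_\infty} \geq 1$ and $f = \chi_B/\lambda$ (so $|f| \leq 1$); applying Lemma~\ref{DCU2.7} with $s = p$, $r = p_\infty$, and $G = B$ produces
\[ \int_B (1/\lambda)^{p(y)}\,d\mu \leq C\mu(B)/\lambda^{p_\infty} + \int_B (e+d(x_0,y))^{-tp_-(B)}\,d\mu = C + \mathrm{tail}. \]
For $t > (\log_2 C_\mu)/p_-$, a standard annular decomposition of $X$ using doubling bounds the tail by the convergent integral $\int_X (e+d(x_0,y))^{-tp_-}\,d\mu$ uniformly in $B$. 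The modular is therefore bounded by an absolute constant, and Lemma~\ref{normish} gives $\Norm{\chi_B}_\pp \leq C\mu(B)^{1/p_\infty}$. The same argument applied to $\cpp$---which lies in $\LH$ with $p'_\infty = p_\infty/(p_\infty - 1)$ precisely because $p_- > 1$---yields $\Norm{\chi_B}_\cpp \leq C\mu(B)^{1/p'_\infty}$, and the product is $C\mu(B)^{1/p_\infty + 1/p'_\infty} = C\mu(B)$.

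The main subtle point is the indispensability of the hypothesis $p_- > 1$: without it, $(p')_+ = \infty$ and the conjugate-side application of Lemma~\ref{DCU2.7} is unavailable, breaking the large-ball argument. The other issue requiring care is that the tail bound must be shown uniform in $B$, which forces the choice of $t$ to depend only on $p_-$ and $\log_2 C_\mu$ rather than on the ball.
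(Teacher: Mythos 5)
Your proof is correct and takes essentially the same approach as the paper. The small-ball case is the paper's argument verbatim (a direct modular bound giving $\Norm{\chi_B}_\pp \leq \mu(B)^{1/p_+(B)}$, its conjugate analogue, and then Lemma~\ref{DCU2.9} to absorb the excess power of $\mu(B)$), and the large-ball case is exactly the content of the paper's reference to Corollary~\ref{cor} with $w=1$, whose proof is itself the application of Lemma~\ref{DCU2.7} together with the annular tail estimate that you carry out explicitly; you have simply unpacked that citation rather than invoking it.
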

\begin{proof} Fix a ball $B$. If $\mu(B) \leq 1$, then by Lemma \ref{conditionalbound},
\[ \Norm{\chi_B}^{p_+(B)}_\pp \leq \int_B 1^{p(x)}d\mu = \mu(B), \]
which implies
\[ \Norm{\chi_B}_\pp \leq C\mu(B)^{1/p_+(B)}, \]
and by the same argument applied to $\cpp$,
\[ \Norm{\chi_B}_\cpp \leq C\mu(B)^{1/(p')_+(B)} = C\mu(B)^{1-1/p_-(B)}. \]
Thus (applying Lemma \ref{DCU2.9})
\[ \Norm{\chi_B}_\pp\Norm{\chi_B}_\cpp \leq C[\mu(B)^{p_-(B)+p_+(B)}]^{1/p_+(B)p_-(B)}\mu(B) \leq K\mu(B), \]
which is the desired inequality.
Suppose now $\mu(B) > 1$. By an argument that is essentially the same as the proof of Corollary~\ref{cor} with $w=1$, we get that
\[ \Norm{\chi_B}_\pp \Norm{\chi_B}_\cpp \leq K\mu(B)^{1/{p_\infty}+1/{p'_\infty}} = K\mu(B). \]
\end{proof}


\section{Dyadic Cubes} \label{cubes}
Important to the proofs of many results of variable exponent spaces in $\mathbb{R}^n$ are the dyadic cubes of the form
\[ Q = [m_12^{-k},(m_1+1)2^{-k}) \times \cdots \times [m_n2^{-k},(m_n+1)2^{-k}), \quad m_1,\dots,m_n \in \mathbb{Z}. \]
Due to the usefulness of dyadic objects in many areas of harmonic analysis, a great deal of effort has gone into developing similar systems in metric and quasi-metric spaces, for example \cite{Christ,systems,Korvenpaa}. We will use the form of Hy\"oten and Kairema's construction \cite{systems} presented in \cite{bump}.

\begin{theorem} \label{dyadic} There exist constants $C_d>0,d_0>1$, and $0<\epsilon<1$ depending on $X$, a family $\mathcal{D} = \bigcup_{k \in \mathbb{Z}} \mathcal{D}_k$, called the \textbf{dyadic grid} on $X$ of subsets of $X$, called \textbf{dyadic cubes}, and a collection $\{x_c(Q)\}_{Q \in \mathcal{D}}$ of points such that:
\begin{enumerate}
\item For every $k \in \mathbb{Z}$ the cubes in $\D_k$ are pairwise disjoint and $X = \bigcup_{Q \in \mathcal{D}_k} Q$. We will refer to the cubes in $\mathcal{D}_k$ as cubes in the $k$\emph{th generation}.
\item If $Q_1,Q_2 \in \mathcal{D}$, then either $Q_1 \cap Q_2 = \varnothing$, or $Q_1 \subseteq Q_2$, or $Q_2 \subseteq Q_1$.
\item For any $Q_1 \in \mathcal{D}_k$, there exists at least one $Q_2 \in  \mathcal{D}_{k-1}$, which is called a \emph{child} of $Q_1$, such that $Q_2 \subseteq Q_1$, and there exists exactly one $Q_3 \in \mathcal{D}_{k+1}$, which is called a $\emph{parent}$ of $Q_1$, such that $Q_1 \subseteq Q_3$.
\item If $Q_2$ is a child of $Q_1$, then $\mu(Q_2) \geq \epsilon\mu(Q_1)$.
\item For every $k$ and $Q \in \mathcal{D}_k$, $B(x_c(Q),d_0^k) \subseteq Q \subseteq B(x_c(Q),C_dd_0^k)$.
\end{enumerate}
\end{theorem}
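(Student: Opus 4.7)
The plan is to construct the dyadic grid inductively, generation by generation, following the approach of Christ and its refinement by Hytönen--Kairema. First I would fix a constant $d_0 > 1$ sufficiently large relative to the quasi-metric constant $A_0$ (for instance $d_0 \geq 8A_0^3$), and for each $k \in \mathbb{Z}$ use Zorn's lemma (or a greedy procedure on a countable dense subset, using $\sigma$-finiteness of $\mu$) to select a maximal $d_0^k$-separated set of center points $\{x_c^k(\alpha)\}_{\alpha \in I_k}$, with the crucial additional requirement that the collection of centers be nested across generations: $\{x_c^{k+1}(\alpha) : \alpha \in I_{k+1}\} \subseteq \{x_c^k(\alpha) : \alpha \in I_k\}$. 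This nesting can be arranged by constructing the generations from coarsest to finest locally, or equivalently by choosing a sequence of maximal separated sets compatibly. Maximality immediately gives the covering bound $X = \bigcup_\alpha B(x_c^k(\alpha), d_0^k)$, while separation gives the disjointness $B(x_c^k(\alpha), d_0^k/(2A_0))$ pairwise disjoint.

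Next I would define a \emph{parent map} $\pi_k : I_k \to I_{k+1}$ by sending $\alpha$ to any index $\beta \in I_{k+1}$ with $d(x_c^k(\alpha), x_c^{k+1}(\beta)) < d_0^{k+1}$ (which exists by maximality at generation $k+1$); when the centers are nested one may simply take $\pi_k(\alpha) = \alpha$ whenever the $k$-th generation center is retained at generation $k{+}1$. The dyadic cubes $Q^k_\alpha$ are then defined by iteratively assigning each point $y \in X$ to a chain of centers across generations: at each generation $k$, $y$ is placed in $Q^k_\alpha$ where $\alpha$ is chosen consistently with the parent relation at all coarser scales (breaking ties by a fixed well-ordering of $I_k$). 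This procedure automatically delivers property (1) (each $\mathcal{D}_k$ is a partition) and property (2) (nestedness), since by construction $Q^k_\alpha \subseteq Q^{k+1}_{\pi_k(\alpha)}$.

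To verify the geometric sandwich (5), the key observation is that since $y \in Q^k_\alpha$ implies $y$ sits in the Voronoi-like region around $x_c^k(\alpha)$ in a nested-tie-broken sense, the triangle inequality iterated with geometric series $\sum_j d_0^{-j}$ and the constant $A_0$ yields inclusions $B(x_c^k(\alpha), c_1 d_0^k) \subseteq Q^k_\alpha \subseteq B(x_c^k(\alpha), c_2 d_0^k)$ for constants $c_1, c_2$ depending on $A_0, d_0$. Rescaling the grid by choosing $d_0$ slightly larger, one may absorb $c_1$ to be $1$ and take $C_d = c_2/c_1$. Properties (3) (existence of parents/children) and (4) (the $\epsilon$ lower bound on children) then follow from (5) together with the doubling condition on $\mu$: a child $Q^k_\beta$ of $Q^{k+1}_\alpha$ contains the ball $B(x_c^k(\beta), d_0^k)$, and the parent cube is contained in $B(x_c^{k+1}(\alpha), C_d d_0^{k+1})$, so the doubling constant applied $O(\log_2(C_d d_0))$ times produces a uniform $\epsilon$.

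The main obstacle I expect is producing the nested property (2) simultaneously with good quantitative control in (5): a naive Voronoi partition based on nearest center does not yield nested cells because the Voronoi region of a point $x_c^k(\alpha)$ need not be contained in the Voronoi region of its parent $x_c^{k+1}(\pi_k(\alpha))$. The resolution, as in Christ's construction, is to resolve ``boundary strips'' of width comparable to $d_0^k$ by assigning them inward to the child whose chain of ancestors agrees with the chain assigned to nearby interior points; this forces one to track the full chain of ancestors rather than making a local nearest-center choice, and it is this coupling across scales that requires the bulk of the technical work.
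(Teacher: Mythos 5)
The paper does not prove this theorem; it is quoted directly from Hytönen and Kairema's construction of dyadic systems on quasi-metric spaces, via the presentation in \cite{bump}. There is therefore no in-paper argument to compare against. Your sketch does correctly outline the known construction: maximal $d_0^k$-separated nested families of centers (with coverage from maximality and disjointness of the shrunken balls from separation), a parent map across generations, assignment of each point to a consistent ancestor chain rather than to a local nearest center, the geometric-series argument for the sandwich inclusions, and the doubling argument for the $\epsilon$-lower bound on children. You also correctly identify the genuine difficulty, namely that naive Voronoi cells are not nested and one must resolve boundary strips by tracking full ancestor chains, which is exactly the technical crux of Christ's original construction and its Hytönen--Kairema refinement. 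Two small cautions if you were to flesh this out: the statement ``rescaling $d_0$ slightly larger absorbs $c_1$ to $1$'' should be read as choosing the separation radii $r_k = c_1^{-1}d_0^k$ (the constant $c_1$ depends only on $A_0$ and the ratio $r_{k+1}/r_k$, not on the absolute scale, so this is legitimate), and the property that every cube has exactly one parent follows automatically from nesting plus the fact that each $\mathcal{D}_{k+1}$ partitions $X$, rather than being a separate verification.
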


In general, we may freely switch back and forth between the settings of cubes and balls. Consider, for example, the following equivalent formulation of the $\App$ condition.

\begin{lemma}[The $\App$ condition for cubes] \label{Apcubes}
Given a dyadic grid $\mathcal{D}$ and $\pp \in \LH$, if $w \in \App$, then there exists a constant $K_q$ such that for any $Q \in \mathcal{D}$,
\[ \Norm{w\chi_Q}_\pp \Norm{w^{-1}\chi_Q}_\cpp \leq K\mu(Q). \]
\end{lemma}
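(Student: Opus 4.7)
The plan is to exploit property (5) of Theorem~\ref{dyadic}, which sandwiches every dyadic cube between two concentric quasi-metric balls whose radii differ only by the fixed constant $C_d$. This reduces the cube version of the $\App$ condition almost immediately to the ball version in Definition~\ref{App}, with the doubling constant $C_\mu$ absorbing the difference in measure between the inner and outer balls.

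More precisely, I would fix $Q \in \mathcal{D}_k$ and set $B_1 = B(x_c(Q), d_0^k)$ and $B_2 = B(x_c(Q), C_d d_0^k)$, so that $B_1 \subseteq Q \subseteq B_2$. Since $w \geq 0$ and $w^{-1} \geq 0$, the pointwise bounds $w\chi_Q \leq w\chi_{B_2}$ and $w^{-1}\chi_Q \leq w^{-1}\chi_{B_2}$ together with monotonicity of the Luxembourg norm give
\[ \Norm{w\chi_Q}_\pp \Norm{w^{-1}\chi_Q}_\cpp \leq \Norm{w\chi_{B_2}}_\pp \Norm{w^{-1}\chi_{B_2}}_\cpp \leq [w]_\App \, \mu(B_2), \]
where the last inequality is the ball version of the $\App$ condition (Definition~\ref{App}).

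It then remains to control $\mu(B_2)$ by $\mu(Q)$. I would pick an integer $N$ with $2^N \geq C_d$ and iterate the doubling property $N$ times to obtain $\mu(B_2) \leq C_\mu^N \mu(B_1) \leq C_\mu^N \mu(Q)$, using $B_1 \subseteq Q$ in the last step. Setting $K_q = [w]_\App \cdot C_\mu^N$ yields the claim, with $K_q$ depending only on $X$ (through $C_\mu$ and $C_d$) and on $[w]_\App$. I do not anticipate any real obstacles here: the entire argument is a standard ``ball $\leftrightarrow$ cube'' comparison, and no use of the log-Hölder hypothesis on $\pp$ is actually needed for this particular statement (only that $w \in \App$ as stated). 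If anything, the mild subtlety worth flagging is just that one should verify that monotonicity $\Norm{f}_\pp \leq \Norm{g}_\pp$ whenever $0 \leq f \leq g$ holds in the general variable exponent setting, which is immediate from the definition of the modular and the Luxembourg norm.
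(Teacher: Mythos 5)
Your proof is correct and follows essentially the same route as the paper's: expand $Q$ to the outer ball $B(x_c(Q), C_d d_0^k)$, apply the ball-form $\App$ condition there, and then shrink back to $Q$ via $B(x_c(Q), d_0^k) \subseteq Q$ using a measure comparison between the concentric balls. The only cosmetic difference is that you iterate the doubling inequality directly to compare $\mu(B_2)$ with $\mu(B_1)$, whereas the paper invokes the Lower Mass Bound (Lemma~\ref{LMB}), but these give the same estimate; you are also right that the log-H\"older hypothesis plays no role in this lemma.
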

\begin{proof} Fix $Q \in \mathcal{D}_k$. Then by Theorem \ref{dyadic}, the $\App$ condition, and the lower mass bound,
\begin{multline*}
\Norm{w\chi_Q}_\pp\Norm{w^{-1}\chi_Q}_\cpp \leq \Norm{w\chi_{B(x_c(Q),C_dd_0^k)}}_\pp \Norm{w^{-1}\chi_{B(x_c(Q),C_drd_0^k)}}_\cpp \\
\leq K\mu(B(x_c(Q),Cd_0^k)) \leq C\mu(B(x_c(Q),d_0^k)) \leq C\mu(Q).
\end{multline*}
The constant $C$ is independent of $k$.
\end{proof}

In general, the argument in the proof of Lemma \ref{Apcubes}, in which we expand cubes to fill balls and then apply the lower mass bound to shrink back to cubes, may be used to show that any previously stated result is also true when balls are replaced by cubes. In particular, Lemmas~\ref{Ainfty} and \ref{fracexp} hold in this way. Another object which it is convenient to recast into a dyadic form is the maximal operator.

\begin{definition}
Given a weight $\sigma$ and a dyadic grid $\mathcal{D}$, define the \textbf{weighted dyadic maximal operator} $M_\sigma^\mathcal{D}$ with respect to $\mathcal{D}$ by
\[ M_\sigma^\mathcal{D} f(x) = \sup_{\substack{Q \ni x \\ Q \in \mathcal{D}}} \, \avgint_Q |f(y)|\,d\sigma \]
for any locally integrable function $f$. When $\sigma = 1$, we will denote $M^\mathcal{D}_\sigma$ simply by $M^\mathcal{D}$.
\end{definition}

The weighted dyadic maximal operator satisfies the same weak- and strong-type inequalities as the classical maximal operator. Given a fixed grid $\mathcal{D}$ and weight $\sigma$, for each $\lambda>0$, we define the set
\[ X^\mathcal{D}_\lambda = \{x \in X\,:\,M^\mathcal{D}_\sigma f(x) > \lambda\}. \]
Then the following lemma holds.

\begin{lemma} \label{pbound}
Given a dyadic grid $\mathcal{D}$ on $X$ and a weight $\sigma$, the dyadic maximal operator $M_\sigma^\mathcal{D}$ is weak $(1,1)$: for $f \in L^1(\sigma)$ and all $\lambda>0$,
\[ \sigma\left(X^\mathcal{D}_\lambda\right) \leq \frac{1}{\lambda}\int_X |f(x)|\,d\sigma. \]
Further, for $1<p<\infty$, $M_\sigma^\mathcal{D}$ is strong (p,p): there exists a constant $C$ depending on $p$ and $X$ such that for any $f \in L^p(\sigma)$,
\[ \int_X M_\sigma^\mathcal{D} f(x)^p\,d\sigma \leq C\int_X |f(x)|^p\,d\sigma. \]
\end{lemma}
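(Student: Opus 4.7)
The plan is to establish the weak $(1,1)$ bound first by a direct Calder\'on-Zygmund-style maximal cube decomposition, and then to obtain the strong $(p,p)$ bound via Marcinkiewicz interpolation against the trivial $L^\infty(\sigma) \to L^\infty(\sigma)$ bound (since $M_\sigma^\mathcal{D} f \leq \|f\|_\infty$ pointwise). This strategy mirrors the standard proof on $\mathbb{R}^n$ with Lebesgue measure, and the only features of the ambient space it uses are the partition and nesting properties of the dyadic grid from Theorem~\ref{dyadic}.

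For the weak $(1,1)$ bound, fix $f \in L^1(\sigma)$ and $\lambda > 0$, and let $\mathcal{F}$ be the collection of all $Q \in \mathcal{D}$ with $\frac{1}{\sigma(Q)}\int_Q |f|\,d\sigma > \lambda$. Every such $Q$ satisfies $\sigma(Q) < \lambda^{-1}\|f\|_{L^1(\sigma)}$, so the $\sigma$-measures of cubes in $\mathcal{F}$ are uniformly bounded. Since the ancestors of any $Q \in \mathcal{D}$ form a totally ordered chain of increasing $\sigma$-measure (by property (2) of Theorem~\ref{dyadic}), each $Q \in \mathcal{F}$ is contained in a unique maximal element of $\mathcal{F}$. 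Let $\{Q_j\}$ enumerate these maximal cubes; they are pairwise disjoint by the nesting property, and $X_\lambda^\mathcal{D} = \bigcup_j Q_j$ since every $x \in X_\lambda^\mathcal{D}$ lies in some $Q \in \mathcal{F}$. Then
\[ \sigma(X_\lambda^\mathcal{D}) = \sum_j \sigma(Q_j) < \sum_j \frac{1}{\lambda}\int_{Q_j} |f|\,d\sigma \leq \frac{1}{\lambda}\int_X |f|\,d\sigma. \]

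For the strong $(p,p)$ bound, proceed by the standard Marcinkiewicz argument: for $\lambda > 0$, split $f = f\chi_{\{|f| > \lambda/2\}} + f\chi_{\{|f| \leq \lambda/2\}}$. The second piece contributes at most $\lambda/2$ to $M_\sigma^\mathcal{D} f$, so $\{M_\sigma^\mathcal{D} f > \lambda\} \subseteq \{M_\sigma^\mathcal{D}(f\chi_{\{|f|>\lambda/2\}}) > \lambda/2\}$. Applying the weak $(1,1)$ bound to $f\chi_{\{|f|>\lambda/2\}}$ and using the layer-cake formula
\[ \int_X (M_\sigma^\mathcal{D} f)^p\,d\sigma = p\int_0^\infty \lambda^{p-1}\sigma(\{M_\sigma^\mathcal{D} f > \lambda\})\,d\lambda, \]
Fubini's theorem gives a bound of the form $C\cdot p/(p-1)\int_X |f|^p\,d\sigma$, which is the desired estimate.

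The main (mild) obstacle is the selection of maximal cubes in an abstract space of homogeneous type, where one does not automatically have an enclosing largest cube; but the uniform $\sigma$-measure bound $\sigma(Q) < \lambda^{-1}\|f\|_{L^1(\sigma)}$ combined with the linearly ordered chain of ancestors guaranteed by property (2) of Theorem~\ref{dyadic} eliminates this issue. Everything else is routine and identical to the Euclidean proof.
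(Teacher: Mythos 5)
Your strong $(p,p)$ argument via Marcinkiewicz interpolation (splitting $f$ at height $\lambda/2$, using the trivial $L^\infty$ bound on the small part and the weak $(1,1)$ bound on the large part, then layer cake plus Fubini) is correct and is a genuine alternative to the paper's route. The paper instead proves the strong type directly: for $f \in L^1(\sigma)\cap L^\infty(\sigma)$ it first checks that $\Norm{M^\mathcal{D}_\sigma f}_{L^p(\sigma)}$ is finite, then runs Tonelli and the weak $(1,1)$ bound to get $\Norm{M^\mathcal{D}_\sigma f}_{L^p(\sigma)}^p \leq C\Norm{f}_{L^p(\sigma)}\Norm{M^\mathcal{D}_\sigma f}_{L^p(\sigma)}^{p-1}$, cancels, and finishes by density. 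Your interpolation version dispenses with the a priori finiteness/density step, so there is something to be said for it.

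The weak $(1,1)$ argument, however, has a genuine gap in the selection of maximal cubes. You assert that each $Q \in \mathcal{F}$ is contained in a unique maximal element of $\mathcal{F}$ because the ancestors form an increasing chain with uniformly bounded $\sigma$-measure; but a bounded increasing sequence need not be finite, so this does not yield a maximal element. Concretely, if $\sigma(X) < \infty$ (which is allowed here, since $\sigma$ is an arbitrary weight), the whole ascending chain of ancestors of some $Q$ can lie in $\mathcal{F}$ while $\sigma$ of each ancestor stays below $\lambda^{-1}\Norm{f}_{L^1(\sigma)}$, and then no maximal cube exists. (Take $X = \mathbb{R}$, $\sigma$ integrable, $f \equiv 1$, $\lambda < 1$: every dyadic interval has average $1 > \lambda$.) The paper sidesteps this by first working with the truncated operator $M^n_\sigma$ over cubes of generation at most $n$, for which the ancestor chain inside any $\mathcal{F}$ is automatically finite and maximal cubes exist; one proves the weak bound uniformly in $n$, and since $M^n_\sigma f \uparrow M^\mathcal{D}_\sigma f$ pointwise, the sets $\{M^n_\sigma f > \lambda\}$ increase to $X^\mathcal{D}_\lambda$ and continuity of measure finishes the proof. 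Your argument can be repaired either by the same truncation, or by handling the infinite chains directly (their unions are pairwise disjoint from one another and from the maximal cubes, and each satisfies $\sigma(A) \leq \lambda^{-1}\int_A |f|\,d\sigma$ in the limit), but as written the step is missing.
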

\begin{proof}
For each integer $n$, define the truncated maximal operator
\[ M_\sigma^n f(x) = \sup_{\substack{x \in Q \in \mathcal{D}_k \\ k \leq n}} \, \avgint_Q |f(y)|\,d\sigma. \]
Observe that for every $x \in X$, the sequence $\left\{M_\sigma^k f(x)\right\}$ increases to $M_\sigma^\mathcal{D} f(x)$. Certainly, it is increasing and bounded; if $M_\sigma^\mathcal{D} f(x)<\infty$, then for any $\epsilon>0$ there exists a cube $Q$ for which
\[ M_\sigma^\mathcal{D} f(x)-\epsilon \leq \avgint_Q |f(y)|\,d\sigma; \]
but then for any $n$ greater than the generation of $Q$,
\[ \avgint_Q |f(y)|\,d\sigma \leq M_\sigma^n f(x), \]
and so the sequence converges. A similar argument shows that if $M_\sigma^\mathcal{D} f(x) = \infty$, then $M_\sigma^n f(x)$ can be made greater than any integer.

Therefore, by the monotone convergence theorem, it suffices to prove the weak-type inequality for the truncated maximal operator. To that end, fix $\lambda>0$. If $M_\sigma^n f(x)>\lambda$, then there exists a cube $Q_x$ containing $x$ such that
\[ \avgint_{Q_x} |f(y)|\,d\sigma > \lambda, \]
and $Q_x$ is of generation at most $n$. Without loss of generality, take $Q_x$ to be the maximal of all such cubes, and let its generation be $k$. Since there are countably many dyadic cubes, the set $\{Q_x\,:\,x \in X\}$ may be enumerated as $\{Q_j\}$. If $Q_i \cap Q_j \neq \varnothing$ for some $i \neq j$, then we have some containment $Q_i \subseteq Q_j$ (without loss of generality), and thus $Q_i=Q_j$ by maximality, so the cubes are mutually disjoint. Then
\[ \sigma\left(\{x \in X\,:\,M_\sigma^n f(x)>\lambda\}\right) = \sum_j \sigma(Q_j) \leq \frac{1}{\lambda}\sum_j \int_{Q_j} |f(y)|\,d\sigma \leq \int_X |f(y)|\,d\sigma. \]
This proves the weak-type inequality.

For the strong-type inequality,
\[ \avgint_Q |f(y)|\,d\sigma \leq \frac{1}{\sigma(Q)}\Norm{f}_{L^\infty(\sigma)} \int_Q \,d\sigma = \Norm{f}_{L^\infty(\sigma)} = \Norm{f}_{L^\infty(\sigma)}. \]
Now fix $1<p<\infty$ and $ f \in L^1(\sigma) \cap L^\infty(\sigma)$. Without loss of generality, assume $f\sigma \neq 0$. Then $M^\mathcal{D}_\sigma f \in L^{1,\infty}(\sigma) \cap L^\infty(\sigma)$,
and consequently by Tonelli's theorem,
\[ \int_X M^\mathcal{D}f(x)^p\,d\sigma = \int_0^\infty p\lambda^{p-1}\sigma\left(\{x \in X\,:\,M^\mathcal{D}_\sigma f(x) > \lambda\}\right)\,d\lambda \leq C\int_0^{\Norm{M^\mathcal{D}_\sigma f}_{L^\infty(\sigma)}}\lambda^{p-2}\,d\lambda < \infty. \]
Thus $0 < \Norm{M^\mathcal{D}_\sigma f}_{L^p(\sigma)}<\infty$. Hence, by the weak-type inequality, Tonelli's Theorem, and H\"{o}lder's inequality,
\begin{align*} \int_X M_\sigma^\mathcal{D} f(x)^p\sigma(x)\,d\mu &= p\int_0^\infty \lambda^{p-1}\sigma\left(\{x \in X\,:\,M_\sigma^\mathcal{D} f(x) > \lambda\}\right)\,d\lambda \\ &\leq p\int_0^\infty \lambda^{p-2}\int_X|f(x)|\,d\sigma\,d\lambda \\ &= p\int_X |f(x)|\int_{\{\lambda\,:\,M^\mathcal{D}_\sigma f(x)>\lambda\}} \lambda^{p-2}\,d\lambda\,\,d\sigma \\
&\leq \frac{p}{p-1}\int_X |f(x)|[M_\sigma^\mathcal{D} f(x)]^{p-1}\,d\sigma \\ &\leq C\Norm{f}_{L^p(\sigma)}\Norm{M_\sigma^\mathcal{D}f}_{L^p(\sigma)}^{p-1}. \end{align*}
Rearranging, we obtain that
\[ \int_X M_\sigma^\mathcal{D} f(x)^p\,d\sigma \leq C\int_X|f(x)|^p\,d\sigma, \]
which is the desired strong-type inequality. For general functions $f \in L^p(X)$, the desired inequality follows from an approximation argument if we use Lemma \ref{density} and the monotone convergence theorem.
\end{proof}

We now prove the Calder\'on-Zygmund decomposition for the maximal operator over spaces of homogeneous type. This result is known, but since we could not find the precise formulation we wanted, for completeness we include the proof here.

\begin{lemma}[Calder\'{o}n-Zygmund Decomposition]\label{CZ} If $\mu(X)=\infty$, given a weight $\sigma \in A_\infty$, let $\mathcal{D}$ be a dyadic grid on $X$. If $f \in L^1_{loc}(\sigma)$ is such that $\avgint_{Q^k} |f(x)|\sigma(x)\,d\mu \to 0$ for any nested sequence $\{Q_k \in \mathcal{D}_{k}\}_{k=0}^\infty$, where each $Q_k$ is a child of $Q_{k+1}$, then for each $\lambda>0$, there exists a (possibly empty) set $\{Q_j\}$, called the Calder\'on-Zygmund (CZ) cubes of $f$ at height $\lambda$, of pairwise disjoint dyadic cubes and a constant $C_{CZ}=C_{CZ}(\mathcal{D},X,\sigma)>1$, independent of $\lambda$, such that
\[ X^\mathcal{D}_\lambda = \bigcup_j Q_j. \]
Moreover, for each $j$,
\begin{equation} \lambda < \avgint_{Q_j} |f(x)|\,d\sigma \leq C_{CZ}\lambda. \label{CZdecomp} \end{equation}
If $\{Q^k_j\}$ are the Calder\'on-Zygmund cubes at height $a^k$ for $k \in \mathbb{Z}$ and $a>C_{CZ}$, define $E^k_j=Q^k_j \setminus X^\mathcal{D}_{a^{k+1}}$. These sets are pairwise disjoint for all $j$ and $k$, and $\sigma(E^k_j) \geq \frac{a-C_{CZ}}{a}\sigma(Q^k_j)$.

If $\mu(X) < \infty$, then the Calder\'on-Zygmund cubes may be constructed for any function $f \in L^1_{loc}(\sigma)$ and at any height $\lambda > \avgint_X |f(y)|\,d\sigma \equiv \lambda_0$, with \eqref{CZdecomp} still holding. In this case, the sets $E^k_j$ are defined only for $k > \log_a\lambda_0$, and are pairwise disjoint with $\sigma(E^k_j) \geq \frac{a-C_{CZ}}{a}\sigma(Q^k_j)$.
\end{lemma}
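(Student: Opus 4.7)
The plan is to build the Calderón–Zygmund cubes by maximal selection and then extract the two-sided bound on $\avgint_{Q_j} |f|\,d\sigma$ and the $E^k_j$-measure estimate by combining the child mass bound from Theorem~\ref{dyadic}(4) with the $A_\infty$ characterization of $\sigma$ in Lemma~\ref{Ainfty}.

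Assume first $\mu(X) = \infty$ and fix $\lambda > 0$. For each $x \in X^{\mathcal D}_\lambda$, the definition of $M^{\mathcal D}_\sigma$ yields at least one dyadic cube $Q \ni x$ with $\avgint_Q |f|\,d\sigma > \lambda$, and the hypothesis that $\avgint_{Q_k}|f|\,d\sigma \to 0$ along every nested chain forbids an infinite ascending chain of such cubes, so a (unique) maximal such cube $Q_x$ exists. Enumerating the distinct $Q_x$ as $\{Q_j\}$, dyadic nesting (Theorem~\ref{dyadic}(2)) together with maximality forces pairwise disjointness. The equality $X^{\mathcal D}_\lambda = \bigcup_j Q_j$ then holds: $\supseteq$ is by construction, $\subseteq$ because any $x \in Q_j$ has $Q_j$ itself as a witness that $M^{\mathcal D}_\sigma f(x) > \lambda$. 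The lower inequality in \eqref{CZdecomp} is by selection.

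The main step, and the principal obstacle in this generality, is the upper bound in \eqref{CZdecomp}. By maximality, the dyadic parent $\widehat{Q}_j$ satisfies $\avgint_{\widehat{Q}_j}|f|\,d\sigma \le \lambda$, so
\[
\avgint_{Q_j} |f|\,d\sigma \;\le\; \frac{\sigma(\widehat{Q}_j)}{\sigma(Q_j)}\avgint_{\widehat{Q}_j} |f|\,d\sigma \;\le\; \frac{\sigma(\widehat{Q}_j)}{\sigma(Q_j)}\,\lambda,
\]
and what remains is to bound the $\sigma$-ratio of parent to child. Unlike in $\R^n$, there is no dimensional bound available by inspection; instead, Theorem~\ref{dyadic}(4) gives $\mu(Q_j) \ge \epsilon\,\mu(\widehat{Q}_j)$, and since $\sigma \in A_\infty$ (with the cube version of Lemma~\ref{Ainfty} justified by the remark following Lemma~\ref{Apcubes}),
\[
\frac{\mu(Q_j)}{\mu(\widehat{Q}_j)} \;\le\; C_2\left(\frac{\sigma(Q_j)}{\sigma(\widehat{Q}_j)}\right)^{\!\beta}
\]
for some $\beta, C_2 > 0$. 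Combining these two inequalities yields $\sigma(\widehat{Q}_j)/\sigma(Q_j) \le (C_2/\epsilon)^{1/\beta}$, which we take as $C_{CZ}$.

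For the $E^k_j$ estimate, observe that any $Q^{k+1}_i$ has $\avgint_{Q^{k+1}_i}|f|\,d\sigma > a^{k+1} > a^k$, so it lies inside $X^{\mathcal D}_{a^k}$ and therefore, by dyadic nesting and maximality, inside some unique $Q^k_j$. Hence $X^{\mathcal D}_{a^{k+1}} \cap Q^k_j = \bigcup_{Q^{k+1}_i \subseteq Q^k_j} Q^{k+1}_i$, and using the lower bound in \eqref{CZdecomp} on each $Q^{k+1}_i$ together with the upper bound on $Q^k_j$,
\[
\sigma\!\left(X^{\mathcal D}_{a^{k+1}} \cap Q^k_j\right) \;\le\; \frac{1}{a^{k+1}}\int_{Q^k_j} |f|\,d\sigma \;\le\; \frac{C_{CZ}}{a}\,\sigma(Q^k_j),
\]
yielding $\sigma(E^k_j) \ge \frac{a - C_{CZ}}{a}\,\sigma(Q^k_j)$. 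Pairwise disjointness at fixed $k$ is inherited from the $Q^k_j$'s; across levels $l \ge k+1$, $E^l_i \subseteq X^{\mathcal D}_{a^{k+1}}$ which by construction is disjoint from $E^k_j$. The case $\mu(X) < \infty$ (where $X$ is itself a ball) requires only the adjustment that the top cube $X$ is excluded from being a CZ cube, which is guaranteed by the hypothesis $\lambda > \lambda_0 = \avgint_X|f|\,d\sigma$; every other step runs verbatim, with the index $k$ restricted to $a^k > \lambda_0$ so that the parent of each $Q^k_j$ still exists in the grid.
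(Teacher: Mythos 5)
Your proof is correct and follows essentially the same strategy as the paper: maximal cube selection, the parent bound for the upper half of \eqref{CZdecomp}, and the counting argument for $\sigma(E^k_j)$. The only (minor but genuine) divergence is how you bound the parent-to-child $\sigma$-ratio: the paper passes from cubes to the comparable balls of Theorem~\ref{dyadic}(5) and then applies the lower mass bound for the doubling measure $\sigma$, whereas you combine Theorem~\ref{dyadic}(4) (the $\mu$-mass bound $\mu(Q_j)\geq\epsilon\,\mu(\widehat Q_j)$) directly with the $A_\infty$ comparison of Lemma~\ref{Ainfty}, avoiding balls altogether. Both yield a $C_{CZ}$ depending only on $\mathcal D$, $X$, and $\sigma$; your route is arguably the cleaner one here since it does not need any geometric comparison of the centers $x_c(Q_j)$ and $x_c(\widehat Q_j)$.
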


\begin{proof}
Suppose first $\mu(X)=\infty$ and fix $\lambda > 0$. If $X^\mathcal{D}_\lambda$ is empty, then take $\{Q_j\}$ to be the empty set. Otherwise, fix $x \in X^\mathcal{D}_\lambda$. Then $x$ is contained in exactly one cube $Q^x_k$ of each generation $k$ and $M^\mathcal{D}_\sigma f(x) > \lambda$, so there exists at least one $k$ for which
\begin{equation} \avgint_{Q^x_k} |f(y)|\,d\sigma > \lambda. \label{max} \end{equation}
Since by assumption
\[ \lim_{k \to \infty} \avgint_{Q^x_k} |f(y)|\,d\sigma \to 0, \]
we may take $k$ to be the largest integer for which \eqref{max} holds. Let $\{Q_x\,:\,x \in X^\mathcal{D}_\lambda\}$ be the set of all such maximal cubes. As in the proof of Lemma~\ref{pbound}, this set must be countable and mutually disjoint. Clearly, $X^\mathcal{D}_\lambda$ is contained in the union of these cubes. Conversely, given any $z \in Q_x$ for some $x$, we have that
\[ M^\mathcal{D}_\sigma f(z) \geq \avgint_{Q_x} |f(y)|\,d\sigma > \lambda, \]
and so $z \in X^\mathcal{D}_\lambda$; consequently,
\[ X^\mathcal{D}_\lambda = \bigcup_j Q_j. \]
We now wish to show the inequalities \eqref{CZdecomp}. The first holds by choice of $Q_j$. For the second, the maximality of each $Q_j$ ensures that its parent, $\widehat{Q}_j$, satisfies
\[ \avgint_{\widehat{Q}_j} |f(y)|\,d\sigma \leq \lambda. \]
From this fact together with Lemma \ref{dyadic} and the lower mass bound,
\[ \avgint_{Q_j} |f(y)|\sigma(y)\,d\mu \leq \frac{\sigma(\widehat{Q}_j)}{\sigma(Q_j)}\lambda \leq \frac{\sigma(B(x_c(\widehat{Q}_j),Cd_0^{k+1}))}{\sigma(B(x_c(Q_j),d_0^k))}\lambda \leq Cd_0^{\log_2C_\mu}\lambda, \]
which is the second inequality in \eqref{CZdecomp}.

Now fix $a>C_{CZ}$ and consider the Calder\'on-Zygmund cubes $\{Q_j^k\}$ at heights $a^k$ for $k \in \mathbb{Z}$. For simplicity, we define $X_k \equiv X^\mathcal{D}_{a^k}$. Observe that $X_{k+1} \subset X_k$. Consequently, given any $Q_i^{k+1}$, the set $\{Q^x_k\}$ (constructed above) for an arbitrary $x \in Q_i^{k+1}$ contains $Q_i^{k+1}$, and so there exists $j$ such that $Q_i^{k+1} \subset Q_j^k$. 

We claim that this implies that the sets $E^k_j$ are pairwise disjoint for all $j,k$. To see this, consider two arbitrary sets $E^{k_1}_{j_1}$ and $E^{k_2}_{j_2}$ and suppose without loss of generality that $k_1 \leq k_2$. By the above argument, there exists $j_3$ such that $Q^{k_2}_{j_2} \subset Q^{k_1}_{j_3}$. If $j_3 = j_1$, then $k_1 \neq k_2$ and so disjointness arises from the containment $E^{k_2}_{j_2} \subset X_{k_2} \subset X_{k_1}$; otherwise, the disjointness of $Q^k_j$ for fixed $k$ implies that for $E^{k_1}_{j_1}$ and $E^{k_2}_{j_2}$.

Now fix $Q^k_j$; we have that
\begin{equation} \sigma(Q^k_j) = \sigma(Q^k_j \cap X_{k+1})+\sigma(E^k_j). \label{sparsedecomp}\end{equation}
By the properties listed above,
\begin{multline*} \sigma(Q^k_j \cap X_{k+1}) = \sum_{i:Q^{k+1}_i \subset Q^k_j} \sigma(Q^{k+1}_i) \\ \leq \frac{1}{a^{k+1}}\sum_{i:Q^{k+1}_i \subset Q^k_j}\int_{Q^{k+1}_i}|f(y)|\,d\sigma
\leq \frac{1}{a^{k+1}}\int_{Q^k_j}|f(y)|\,d\sigma \leq \frac{C_{CZ}}{a}\sigma(Q^k_j). \end{multline*}
After plugging this into \eqref{sparsedecomp} and rearranging, we obtain
\[ \sigma(E^k_j) \geq \frac{a-C_{CZ}}{a}\sigma(Q^k_j), \]
which is the desired inequality.

For $\mu(X)<\infty$, the proof is the same, with one exception. Since $X$ is bounded, for all cubes $Q$ sufficiently large, $Q=X$. As such, choosing $\lambda > \avgint_X |f(y)|\,d\sigma$ ensures that we may find maximal cubes as before.
\end{proof}


\section{Necessity} \label{necessity}
In this section we prove the necessity of the $\App$ condition in Theorem~\ref{goal}. Actually, we will prove necessity in Conjecture~\ref{conj}, but by monotonicity of the norm, we get that the strong-type inequality implies the weak-type, so to prove necessity in both results it suffices to demonstrate that any weight satisfying the latter is in $\App$.

To that end, let $w$ be such a weight and fix a ball $B \subseteq X$.
First, we will show that $w$ is $\pp$-integrable on $B$. Supposing to the contrary, since $p_+ < \infty$ we have from Lemma~\ref{normish} that $\Norm{w\chi_B}_\pp = \infty$. Fix $x \in B$ and choose any ball $E$ with $x \in E \subseteq B$. If we choose $f = \chi_E$ then $Mf(x) \geq \frac{\mu(E)}{\mu(B)}\chi_B$. Then for each $t < \frac{\mu(E)}{\mu(B)}$ the weak-type inequality implies that
\[ t\Norm{w\chi_B}_\pp \leq \Norm{t\chi_{\{x\in X\,:\,Mf(x)>t\}}w}_\pp \leq C\Norm{w\chi_E}_\pp. \]
Thus the right hand side must be infinite, and so by Lemma~\ref{normish},
\[ \int_E w(x)^{p(x)}\,d\mu = \infty. \]
Letting $E$ shrink to $x$ and applying the Lebesgue Differentiation Theorem (since $\mu$ is Borel regular; see \cite[Theorem 1.4]{Ahlfors}), we find that $w(x)^{p(x)} = \infty$ and thus $w(x)=\infty$ for almost every $x$, contrary to the definition of a weight. It follows that $w$ is locally $\pp$-integrable.

Now we show that $w \in \App$. We first assume that $\Norm{w^{-1}\chi_B}_\cpp < \infty$; later, we will see that this is necessarily the case. By the homogeneity of both the weak-type inequality and the $\App$ condition in $w$, we can assume that $\Norm{w^{-1}\chi_B}_\cpp = 1$.

We partition $B$ into the sets
\[ F_0 \equiv \{x \in B\,:\, p'(x) < \infty\}, \qquad F_\infty \equiv \{x \in B\,:\,p'(x) = \infty\}. \]
By the definition of the norm, for any $\lambda \in (\frac{1}{2},1)$,
\[ 1 < \rho_{\cpp}\left(\frac{w^{-1}\chi_B}{\lambda}\right) = \int_{F_0}\left(\frac{w(x)^{-1}}{\lambda}\right)^{p'(x)}\,d\mu + \lambda^{-1}\Norm{w^{-1}\chi_{F_\infty}}_\infty. \]
One of the terms on the right must be greater than $\frac{1}{2}$. More specifically, one of the following must be true: either $\Norm{w^{-1}\chi_{F_\infty}}_\infty \geq \frac{1}{2}$, or there exists $\lambda_0 \in (\frac{1}{2},1)$ for which $\int_{F_0}\left(\frac{w(x)^{-1}}{\lambda}\right)^{p'(x)}\,d\mu \geq \frac{1}{2}$ for any $\lambda \in [\lambda_0,1)$.
Suppose for now it is the first.

Fix $s > \Norm{w^{-1}\chi_{F_\infty}}_\infty^{-1} = \essinf_{x \in F_\infty}w(x)$. There exists a subset $E \subseteq F_\infty$ with $\mu(E)>0$ such that $w(E) \subseteq (0,s]$. Choose the function $f = \chi_E$. Since $\pp$ is identically 1 on $F_\infty$,
\[ \Norm{fw}_\pp = \Norm{w\chi_E}_\pp = w(E). \]
Further, we see that for all $x \in B$,
\[ Mf(x) \geq \frac{\mu(E)}{\mu(B)}. \]
Thus if we fix $t < \frac{\mu(E)}{\mu(B)}$, the weak-type inequality implies that
\[ t\Norm{w\chi_B}_\pp \leq t\Norm{w\chi_{\{x\,:\,Mf(x)>t\}}}_\pp \leq C\Norm{fw}_\pp = Cw(E). \]
If we take the supremum over all such $t$ and rearrange, we get that
\[ \frac{1}{\mu(B)}\Norm{w\chi_B}_\pp \leq C\frac{w(E)}{\mu(E)} \leq Cs. \]
Now taking the infimum over all such $s$, we get
\[ \frac{1}{\mu(B)}\Norm{w\chi_B}_\pp \leq C\Norm{w^{-1}\chi_{F_\infty}}_\infty^{-1} \leq 2C. \]
Since $\Norm{w^{-1}\chi_B}_\cpp = 1$, this is the $\App$ condition on $B$.

We now consider the case that
\[ \int_{F_0}\left(\frac{w(x)^{-1}}{\lambda}\right)^{p'(x)}\,d\mu \geq \frac{1}{2} \]
for all $\lambda \in [\lambda_0,1)$. If we define $F_R = \{x \in F_0\,:\,p'(x) < R\}$ for $R>1$, by the monotone convergence theorem for $\Lpp$ norms (Lemma~\ref{MCT}) we may find $R$ sufficiently large that
\[ \int_{F_R} \left(\frac{w(x)^{-1}}{\lambda_0}\right)^{p'(x)}\,d\mu > \frac{1}{3}.\]
Further, since $\Norm{w^{-1}\chi_B}_\cpp=1$, by Lemma~\ref{normish},
\begin{align*}
\int_{F_R}\left(\frac{w(x)^{-1}}{\lambda_0}\right)^{p'(x)}\,d\mu &\leq \int_{F_R}\left(\frac{2}{\lambda_0}\right)^{p'(x)}\left(\frac{w(x)^{-1}}{2}\right)^{p'(x)}\,d\mu \\ &\leq \left(\frac{2}{\lambda_0}\right)^R \int_{F_R}\left(\frac{w(x)^{-1}}{2}\right)^{p'(x)}\,d\mu \\ &\leq \left(\frac{2}{\lambda_0}\right)^R \\ &< \infty.
\end{align*}
Now define the function
\[ G(\lambda) = \int_{F_R} \left(\frac{w(x)^{-1}}{\lambda}\right)^{p'(x)}\,d\mu. \]
Then we know from the above computations that $\frac{1}{3} < G(\lambda_0) < \infty$ and by the dominated convergence theorem that $G$ is continuous on $[\lambda_0,1]$. If $G(1) \geq \frac{1}{3}$, then by Lemma~\ref{normish}, for any $\lambda \in [\lambda_0,1)$,
\[ \frac{1}{3\lambda} \leq \frac{1}{\lambda}\int_{F_R}w(x)^{-p'(x)}\,d\mu \leq G(\lambda) \leq \lambda^{-R} < \infty. \]
Now by taking $\lambda$ sufficiently close to 1, we may make $\lambda^{-R} \leq 2$, so that
\begin{equation} \frac{1}{3} \leq \int_{F_R}\left(\frac{w(x)^{-1}}{\lambda}\right)^{p'(x)}\,d\mu \leq 2. \label{finally} \end{equation}
On the other hand, if $G(1) < \frac{1}{3}$, then by continuity there is some $\lambda \in (\lambda_0,1)$ such that $G(\lambda)=\frac{1}{3}$, and so by choosing this $\lambda$ we get that \eqref{finally} holds in this case as well.

Having fixed $\lambda$, we now choose our function to be
\[ f(x) = \frac{w(x)^{-p'(x)}}{\lambda^{p'(x)-1}}\chi_{F_R}. \]
Then
\[ \rho_{\pp}(fw) = \int_{F_R}\left(\frac{w(x)^{-1}}{\lambda}\right)^{p'(x)}\,d\mu  \leq 2. \]
Hence, by the proof of Lemma~\ref{normish}, $\Norm{fw}_\pp \leq 2^{1/(p')_-}$. On the other hand, for all $x \in B$,
\[ Mf(x) \geq \avgint_B f(x)\,d\mu =  \frac{\lambda}{\mu(B)}\int_{F_R}\left(\frac{w(x)^{-1}}{\lambda}\right)^{p'(x)}\,d\mu \geq \frac{\lambda}{3\mu(B)}. \]
Thus for $t < \frac{\lambda}{3\mu(B)}$, by the weak-type inequality,
\[ C \geq C\Norm{fw}_\pp \geq t\Norm{w\chi_{\{x\,:\,Mf(x)>t\}}}_\pp \geq t\Norm{w\chi_B}_\pp, \]
which after taking the supremum over all such $t$ is the $\App$ condition.

It remains to be shown that $w \in \App$ if $\Norm{w^{-1}\chi_B}_\cpp = \infty$. To that end, fix $\epsilon > 0$ and define the weight $w_\epsilon(x) = w(x)+\epsilon$. Then $w_\epsilon^{-1} \leq \epsilon^{-1} < \infty$ and so $\Norm{w_\epsilon^{-1}\chi_B}_\cpp < \infty$. We observe that
\begin{align*} \Norm{w_\epsilon\chi_{\{x \in X\,:\,Mf(x)>t\}}}_\pp &\leq \Norm{w\chi_{\{x \in X\,:\,Mf(x)>t\}}}_\pp + \epsilon\Norm{\chi_{\{x \in X\,:\,Mf(x)>t\}}}_\pp.
\intertext{Since $\pp \in \LH$, $M$ satisfies the weak type inequality on $L^\pp(X,\mu)$. This is a result of the sufficiency argument (Section~\ref{sufficiency}) if $p_->1$, and in general it is one case in the main result of \cite{Shukla}. Consequently,}
&\leq C\Norm{fw}_\pp + C\Norm{\epsilon f}_\pp \\ &\leq 2C\Norm{fw_\epsilon}_\pp.
\end{align*}
This shows that $w_\epsilon$ satisfies the weak-type inequality, and does so with a constant depending only on the weak-type inequality constants of $w$ and $1$, both of which are independent of $\epsilon$. From the argument with $\Norm{w^{-1}\chi_B}_\cpp < \infty$, it follows that $w_\epsilon \in \App$. In fact, careful inspection of the previous argument will show that
\[ \Norm{w\chi_B}_\pp\Norm{w_\epsilon^{-1}\chi_B}_\cpp \leq \Norm{w_\epsilon\chi_B}_\pp\Norm{w_\epsilon^{-1}\chi_B}_\cpp \leq K\mu(B) \]
with $K$ depending only on $\pp$ and the weak-type inequality constant (in the $F_\infty$ case the dependency is only on the former, while the $F_0$ case involves $(p')_-$). Since as we said before this is independent of $\epsilon$, we have that $K$ is independent of $\epsilon$. Thus since $w_\epsilon^{-1}$ increases to $w^{-1}$ pointwise, by Lemma~\ref{MCT}, we get that $w \in \App$. While this completes the proof of necessity, it is of note that $w \in \App$ in turn implies that the assumption $\Norm{w^{-1}\chi_B}_\cpp < \infty$ must have been true originally.


\section{Sufficiency} \label{sufficiency}
In this section we prove sufficiency in Theorem \ref{goal}. We first assume that $\mu(X) = \infty$; the finite measure case is much simpler, as we will later show. Consider the following lemma, which is proved in \cite{systems,Kairema}.
\begin{lemma} \label{reduction}
There exists a finite family $\{\mathcal{D}_i\}_{i=1}^N$ of dyadic grids such that
\[ Mf(x) \leq C\sum_{i=1}^N M^{\mathcal{D}_i}f(x) \]
for any function $f$ and almost every $x \in X$.
\end{lemma}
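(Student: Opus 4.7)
The plan is to reduce the lemma to the existence of a finite family of \emph{adjacent dyadic systems}, due to Hyt\"onen and Kairema, which I would quote from \cite{systems, Kairema} rather than reprove. Specifically, there exist dyadic grids $\mathcal{D}_1,\ldots,\mathcal{D}_N$ on $X$ (each as in Theorem~\ref{dyadic}) together with a constant $C_0$, depending only on the quasi-metric constant $A_0$, the doubling constant $C_\mu$, and the dyadic parameters $C_d, d_0, \epsilon$, such that for every quasi-metric ball $B \subseteq X$ there exist an index $i \in \{1,\ldots,N\}$ and a cube $Q \in \mathcal{D}_i$ with $B \subseteq Q$ and $\mu(Q) \leq C_0 \mu(B)$.

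Given this fact, the proof becomes essentially a one-line calculation. First I would fix $x \in X$ and any ball $B$ containing $x$; by the adjacency property, select $i$ and $Q \in \mathcal{D}_i$ with $B \subseteq Q$, $\mu(Q) \leq C_0 \mu(B)$, and in particular $x \in Q$. Then
\[
\avgint_B |f(y)|\,d\mu \;\leq\; \frac{\mu(Q)}{\mu(B)} \avgint_Q |f(y)|\,d\mu \;\leq\; C_0 \, M^{\mathcal{D}_i} f(x) \;\leq\; C_0 \sum_{j=1}^{N} M^{\mathcal{D}_j} f(x).
\]
Taking the supremum over all balls $B$ containing $x$ yields $Mf(x) \leq C \sum_{j=1}^N M^{\mathcal{D}_j} f(x)$ pointwise, which is stronger than the almost-everywhere statement the lemma requires.

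The main obstacle is genuinely the construction of the adjacent dyadic systems, but since this is the content of the cited result I would simply invoke it. For context, in $\mathbb{R}^n$ this is the classical \emph{one-third trick}: finitely many translates of the standard dyadic grid together have the sandwiching property because the diameters of dyadic cubes are comparable to $2^{-k}$ and one can always avoid having a ball straddle a hyperplane shared by cubes in every grid simultaneously. In a general space of homogeneous type one lacks translations, and the analogous construction of Hyt\"onen-Kairema proceeds by running the construction of Theorem~\ref{dyadic} in parallel with several independently chosen families of reference centers, producing $N = N(A_0,C_\mu)$ grids whose cubes together cover all ball-scales with bounded overlap. Since this machinery is by now standard, and the conclusion of the lemma follows from it immediately, I would not reproduce the construction.
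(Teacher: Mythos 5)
Your proof is correct and takes the same route as the paper, which simply cites the Hyt\"onen--Kairema adjacent dyadic systems result from \cite{systems,Kairema} for this lemma; you have merely spelled out the standard two-line deduction (every ball is sandwiched in a cube of comparable measure from one of finitely many grids, hence $Mf \leq C\sum_i M^{\mathcal{D}_i}f$ pointwise) that the paper leaves implicit in the citation.
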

As a result of Lemma $\ref{reduction}$, to prove the boundedness of $M$ it suffices to prove the boundedness of $M^\mathcal{D}$ for an arbitrary dyadic grid $\mathcal{D}$. To that end, fix an exponent $\pp$ with $1<p_- \leq p_+<\infty$, a weight $w \in \App$, and a function $f$; without loss of generality we may assume that $f$ is nonnegative and that $\Norm{fw}_\pp = 1$. It is useful to define the weights $W(\cdot) = w(\cdot)^\pp$ and $\sigma(\cdot) = w(\cdot)^{-\cpp}$, both of which are in $A_\infty$ by Lemma~\ref{Ap to Ainfty} and hence doubling by Lemma~\ref{Ainfty}.

We will want to form the Calder\'on-Zygmund cubes of $f$ (with respect to $\mu$). In order to do so, we must show that $\avgint_{Q_k} |f(x)|\,d\mu \to 0$ as $k \to \infty$ for any nested sequence $\{Q_k\}_{k=1}^\infty$ with $Q_{k-1} \subseteq Q_k \in \mathcal{D}_k$. Fix such a sequence; considering $k=1$, we have as a consequence of $W$ being doubling that
\[ W(Q_1) \leq W(B(x_c(Q_1),C_dd_0)) \leq C_W^{\log_2C_d} W(B(x_c(Q_1),d_0)). \]
By a similar argument, for any $k$,
\[
\frac{1}{W(Q_k)} \leq \frac{C}{W(B(x_c(Q_k),C_dd_0^k))}. \]
Combining these two estimates and applying Lemma \ref{Ainfty}, we get
\[ \frac{W(Q_1)}{W(Q_k)} \leq C\frac{W(B(x_c(Q_1),d_0))}{W(B(x_c(Q_k),C_dd_0^k))} \leq C\left(\frac{\mu(B(x_c(Q_1,d_0))}{\mu(B(x_c(Q_k),C_dd_0^k))}\right)^\delta. \]
If we rearrange and apply the lower mass bound,
\[ W(Q_k) \geq C \mu(B(x_c(Q_k),C_dd_0^k))^\delta \geq C\mu(B(x_c(Q_1),Cd_0^k))^\delta. \]
As $k \to \infty$, by continuity of $\mu$ and the fact that $X = \bigcup_{k=1}^\infty B(x_c(Q_1),Cd_0^k)$, the right side approaches $C\mu(X)^\delta=\infty$, and thus $W(Q_k) \to \infty$. By Lemma~\ref{Holder}, the $\App$ condition, and Lemma~\ref{conditionalbound} respectively, for all $k$ sufficiently large,
\[ \avgint_{Q_k} f(x)\,d\mu \leq C\Norm{fw}_\pp\mu(Q_k)^{-1}\Norm{w^{-1}\chi_{Q_k}}_\cpp \leq C\Norm{w\chi_{Q_k}}^{-1}_\pp \leq CW(Q_k)^{-1/{p_+}}. \]
This gives us the desired limit.

Define $\sigma(x) = w(x)^{-{p'(x)}}$ and decompose $f$ as  $f=f_1+f_2$ where $f_1 = f\chi_{\{f\sigma^{-1} > 1\}}$ and $f_2 = f\chi_{\{f\sigma^{-1} \leq 1\}}$. By sublinearity, $M^\mathcal{D}f \leq M^\mathcal{D}f_1 + M^\mathcal{D}f_2$, and by Lemma \ref{conditionalbound}, for $i=1,2$,
\begin{equation} \int_X |f_i(x)|^{p(x)}w(x)^{p(x)}\,d\mu \leq \Norm{f_iw}_\pp \leq 1. \label{splitnorm} \end{equation}
Hence by Lemma \ref{normish}, to prove the desired inequality it suffices to show that there exists a constant $C$ depending on $X$, $\pp$, and $w$ such that
\begin{equation} \int_X \left(M^\mathcal{D}f_i(x)\right)^{p(x)}w(x)^{p(x)}\,d\mu \leq C, \quad i=1,2. \label{splitbound} \end{equation}

We begin with the estimate of \eqref{splitbound} for $f_1$. Choose $a > C_{CZ}$ and for each $k \in \mathbb{Z}$ let
\[ X_k = \{x \in X\,:\,M^\mathcal{D}f_1(x) > a^{k+1}\}. \]
Since $f \in L^1_{\text{loc}}$ and $\avgint_{Q^k}f(x)\,d\mu \to 0$ as $k \to \infty$, $M^\mathcal{D}f_1$ is finite almost everywhere, and so 
\[ \{x \in X\,:\,Mf_1(x) > 0\} = \bigcup_k X_k \setminus X_{k+1} \]
up to a set of measure zero. Let $\{Q^k_j\}$ be the CZ cubes of $f_1$ at height $a^k$ with respect to $\mu$. Then by Lemma~\ref{CZ}, for all $k$,
\begin{equation} X_k = \bigcup_j Q^k_j. \label{cubedecomp} \end{equation}
Define the sets $E^k_j = Q^k_j \setminus X_{k}$, as in Lemma~\ref{CZ}. Then from \eqref{cubedecomp} we have
\[ X_k \setminus X_{k+1} = \bigcup_j E^k_j. \]
We now estimate:
\begin{align} \int_X M^\mathcal{D}f_1(x)^{p(x)}w(x)^{p(x)}\,d\mu &= \sum_k \int_{X_k \setminus X_{k+1}} M^\mathcal{D}f_1(x)^{p(x)}w(x)^{p(x)}\,d\mu \nonumber \\ &\leq a^{2p_+}\sum_k\int_{X_k \setminus X_{k+1}}a^{kp(x)}w(x)^{p(x)}\,d\mu \nonumber \\ &\leq C\sum_{k,j}\int_{E^k_j}\left(\avgint_{Q^k_j}f_1(y)\,d\mu\right)^{p(x)}w(x)^{p(x)}\,d\mu \nonumber \\ \label{bothbound} &= C\sum_{k,j}\int_{E^k_j}\left(\int_{Q^k_j}f_1(y)\sigma(y)^{-1}\sigma(y)\,d\mu\right)^{p(x)}\mu(Q^k_j)^{-p(x)}w(x)^{p(x)}\,d\mu. \end{align}
Since either $f_1\sigma^{-1} \geq 1$ or $f_1\sigma^{-1} = 0$, 
\begin{multline*} \int_{Q^k_j}f_1(y)\sigma(y)^{-1}\sigma(y)\,d\mu \leq \int_{Q^k_j}(f_1(y)\sigma(y)^{-1})^{p(y)/p_-(Q^k_j)}\sigma(y)\,d\mu \\ \leq \int_{Q^k_j}(f_1(y)\sigma(y)^{-1})^{p(y)}\sigma(y)\,d\mu \leq \int_{Q^k_j}f_1(y)^{p(y)}\,d\mu \leq 1. \end{multline*}
Therefore, 
\begin{multline*} \sum_{k,j}\int_{E^k_j}\left(\int_{Q^k_j}f_1(y)\sigma(y)^{-1}\sigma(y)\,d\mu\right)^{p(x)}\mu(Q^k_j)^{-p(x)}w(x)^{p(x)}\,d\mu \\ \leq \sum_{k,j} \left(\int_{Q^k_j}(f_1(y)\sigma(y)^{-1})^{p(y)/p_-(Q^k_j)}\sigma(y)\,d\mu\right)^{p_-(Q^k_j)}\int_{E^k_j}\mu(Q^k_j)^{-p(x)}w(x)^{p(x)}\,d\mu.
\end{multline*}
If we multiply and divide by $\sigma(Q^k_j)$ and apply H\"{o}lder's inequality with exponent $p_-(Q^k_j)/p_-$, we get
\begin{equation} \leq C\sum_{k,j}\left(\avgint_{Q^k_j}(f_1(y)\sigma(y)^{-1})^{p(y)/p_-}\sigma(y)\,d\mu\right)^{p_-}\int_{E^k_j}\sigma(Q^k_j)^{p_-(Q^k_j)} \mu(Q^k_j)^{-p(x)}w(x)^{p(x)}\,d\mu. \label{prebound} \end{equation}

Assume for the moment that
\begin{equation} \int_{Q^k_j}\sigma(Q^k_j)^{p_-(Q^k_j)}\mu(Q^k_j)^{-p(x)}w(x)^{p(x)}\,d\mu \leq C\sigma(Q^k_j). \label{assumption} \end{equation}
Since $\mu(Q^k_j) \leq C\mu(E^k_j)$ by Lemma~\ref{CZ} and $\sigma \in A_\infty$ by Lemma~\ref{Ap to Ainfty} applied to $w^{-1} \in A_\cpp$, we have from Lemma~\ref{Ainfty} (applied to cubes instead of balls) that $\sigma(Q^k_j) \leq C\sigma(E^k_j)$. Thus \eqref{prebound} is bounded by
\begin{align*}
C\sum_{k,j}\left(\avgint_{Q^k_j}(f_1(y)\sigma(y)^{-1})^{p(y)/p_-}\,d\sigma\right)^{p_-} \sigma(E^k_j) &\leq C\sum_{k,j}\int_{E^k_j} M^\mathcal{D}_\sigma((f_1\sigma^{-1})^{\pp/p_-})(x)^{p_-}\sigma(x)\,d\mu \\ &\leq C\int_X M^\mathcal{D}_\sigma((f_1\sigma^{-1})^{\pp/p_-})(x)^{p_-}\sigma(x)\,d\mu.
\intertext{By Lemma \ref{pbound} and \eqref{splitbound},}
&\leq C\int_X f_1(x)^{p(x)}\sigma(x)^{-p(x)}\sigma(x)\,d\mu \\ &= C\int_X f_1(x)^{p(x)}w(x)^{p(x)}d\mu \\ &\leq C. \end{align*}

We now justify \eqref{assumption}. Observe that the left-hand side is dominated by
\begin{equation} \left(\frac{\sigma(Q^k_j)}{\Norm{w^{-1}\chi_{Q^k_j}}_\cpp}\right)^{p_-(Q^k_j)} \int_{Q^k_j} \Norm{w^{-1}\chi_{Q^k_j}}_\cpp^{p_-(Q^k_j)-p(x)}\Norm{w^{-1}\chi_{Q^k_j}}_\cpp^{p(x)}\mu(Q^k_j)^{-p(x)}w(x)^{p(x)}\,d\mu. \label{tripartite} \end{equation}
We will bound \eqref{tripartite} by showing that, under our hypotheses, it reduces to the $\App$ condition. First, we show that
\begin{equation} \Norm{w^{-1}\chi_{Q^k_j}}_\cpp^{p_-(Q^k_j)-p(x)} \leq C. \label{partone} \end{equation}
If $\Norm{w^{-1}\chi_{Q^k_j}}_\cpp > 1$, then $C=1$ works, so assume otherwise. Then
\begin{multline*}
p(x) - p_-(Q^k_j) = \frac{p'(x)}{p'(x)-1} - \frac{(p')_+(Q^k_j)}{(p')_+(Q^k_j)-1} \\ = \frac{(p')_+(Q^k_j)-p'(x)}{[p'(x)-1][(p')_+(Q^k_j)-1]} \leq \frac{(p')_+(Q^k_j)-(p')_-(Q^k_j)}{[(p')_--1]^2},
\end{multline*}
and so by Lemma \ref{fracexp}, we obtain \eqref{partone}. We would also like to prove the bound
\begin{equation} \left(\frac{\sigma(Q^k_j)}{\Norm{w^{-1}\chi_{Q^k_j}}_\cpp}\right)^{p_-(Q^k_j)} \leq C\sigma(Q^k_j). \label{parttwo} \end{equation}
If $\Norm{w^{-1}\chi_{Q^k_j}}_\cpp > 1$, then by Lemma \ref{conditionalbound},
\[ \left(\frac{\sigma(Q^k_j)}{\Norm{w^{-1}\chi_{Q^k_j}}_\cpp}\right)^{p_-(Q^k_j)} \leq \left(\sigma(Q^k_j)^{1-1/(p')_+(Q^k_j)}\right)^{p_-(Q^k_j)} = \sigma(Q^k_j). \]
If on the other hand $\Norm{w^{-1}\chi_{Q^k_j}}_\cpp \leq 1$, then by Lemma \ref{conditionalbound} (applied twice) and Lemma \ref{fracexp} (applied to cubes),
\begin{align*}
\left(\frac{\sigma(Q^k_j)}{\Norm{w^{-1}\chi_{Q^k_j}}_\cpp}\right)^{p_-(Q^k_j)} &\leq \left(\Norm{w^{-1}\chi_{Q^k_j}}_\cpp^{(p')_-(Q^k_j)-1}\right)^{p_-(Q^k_j)} \\
&\leq \left(\Norm{w^{-1}\chi_{Q^k_j}}_\cpp^{(p')_-(Q^k_j)-1+(p')_+(Q^k_j)-(p')_+(Q^k_j)}\right)^{p_-(Q^k_j)} \\
&\leq C\left(\Norm{w^{-1}\chi_{Q^k_j}}_\cpp^{(p')_+(Q^k_j)-1}\right)^{p_-(Q^k_j)} \\
&\leq C\left(\sigma(Q^k_j)^{\frac{(p')_+(Q^k_j)-1}{(p')_+(Q^k_j)}}\right)^{p_-(Q^k_j)} \\
&\leq C\left(\sigma(Q^k_j)^{\frac{p_-(Q^k_j)'-1}{p_-(Q^k_j)'}}\right)^{p_-(Q^k_j)} \\
&= C\sigma(Q^k_j).
\end{align*}

Applying both \eqref{partone} and \eqref{parttwo} to \eqref{tripartite}, we have that in order to demonstrate \eqref{assumption} it suffices to show
\begin{equation} \int_{Q^k_j}\Norm{w^{-1}\chi_{Q^k_j}}_\cpp^{p(x)}\mu(Q^k_j)^{-p(x)}w(x)^{p(x)}\,d\mu \leq C. \label{partthree} \end{equation}
By Lemma \ref{normish}, this is equivalent to bounding
\[ \Norm{(C\mu(Q^k_j))^{-1}\Norm{w^{-1}\chi_{Q^k_j}}_\cpp w\chi_{Q^k_j}}_\pp = \frac{1}{C\mu(Q^k_j)} \Norm{w\chi_{Q^k_j}}_\pp\Norm{w^{-1}\chi_{Q^k_j}}_\cpp. \]
But by Lemma~\ref{Apcubes} this is, as claimed, the $\App$ condition. Since $w \in \App$, \eqref{assumption} holds for any $k$ and $j$. This completes the proof of \eqref{splitbound} for $f_1$.

\bigskip

We now proceed to show the corresponding bound for $f_2$. Recall that 1, $\sigma$, and $W$ are all in $A_\infty$; from now on, we will use properties of $A_\infty$ without reference.

We would like to fix a particular $\GLH$ base point $x_0$. Let $\{Q^k_j\}$ now represent the CZ cubes of $f_2$ with respect to $\mu$. Choose a nested tower of cubes $\{Q_{k,0}\}$. Since $A_\infty$ weights are doubling, we have that $\mu(Q_{k,0})$, $\sigma(Q_{k,0})$, and $W(Q_{k,0})$ all go to infinity, and as a result we may find a cube $Q_{k_0,0} \equiv Q_0 \in \mathcal{D}_{k_0}$ such that $\mu(Q_0)$, $\sigma(Q_0)$, and $W(Q_0) \geq 1$. By Lemma \ref{basepoint}, we may fix $x_0=x_c(Q_0)$. Let $N_0 = 2A_0C_d$, where $C_d$ is as in Theorem~\ref{dyadic}, and define the sets
\begin{align*} \mathscr{F} &= \{(k,j)\,:\,Q^k_j \subseteq Q_0\} \\ \mathscr{G} &= \{(k,j)\,:\, Q^k_j \not\subseteq Q_0 \text{ and } d(x_0,x_c(Q^k_j)) < N_0d_0^k\} \\ \mathscr{H} &= \{(k,j)\,:\, Q^k_j \not\subseteq Q_0 \text{ and } d(x_0,x_c(Q^k_j)) \geq N_0d_0^k\}. \end{align*}
Observe that $\mathscr{F} \cup \mathscr{G} \cup \mathscr{H} = \mathbb{Z}\times\mathbb{N}$, so that every CZ cube $Q^k_j$ has indices in one of the three sets. By repeating the argument used to obtain \eqref{bothbound} with $f_2$ in place of $f_1$, we may split the corresponding sum into three parts:
\begin{multline*} \int_X M^\mathcal{D}f_2(x)^{p(x)}w(x)^{p(x)}\,d\mu \leq C\sum_{k,j}\int_{E^k_j}\left(\avgint_{Q^k_j}f_2(y)\sigma(y)\sigma(y)^{-1}\,d\mu\right)^{p(x)}w(x)^{p(x)}\,d\mu \\ = C\left(\sum_{(k,j) \in \mathscr{F}} + \sum_{(k,j) \in \mathscr{G}}+\sum_{(k,j) \in \mathscr{H}}\right) = C(I_1 + I_2 + I_3). \end{multline*}
We will bound each of these three sums in turn, beginning with $I_1$. Using that $f_2\sigma^{-1} \leq 1$ to eliminate $f_2$ and then applying \eqref{assumption}, we get
\begin{align*}
I_1 &\leq \sum_{(k,j) \in \mathscr{F}}\int_{E^k_j} \left(\avgint_{Q^k_j}\sigma(y)\,d\mu\right)^{p(x)}w(x)^{p(x)}d\mu \\
&\leq \sum_{(k,j) \in \mathscr{F}}\int_{E^k_j}\sigma(Q^k_j)^{p(x)-p_-(Q^k_j)}\sigma(Q^k_j)^{p_-(Q^k_j)}\mu(Q^k_j)^{-p(x)}w(x)^{p(x)}\,d\mu \\
&\leq \sum_{(k,j) \in \mathscr{F}} (1+\sigma(Q^k_j))^{p_+(Q^k_j)-p_-(Q^k_j)}\int_{E^k_j}\sigma(Q^k_j)^{p_-(Q^k_j)}\mu(Q^k_j)^{-p(x)}w(x)^{p(x)}\,d\mu \\
&\leq C(1+\sigma(Q_0))^{p_+-p_-}\sum_{(k,j)\in\mathscr{F}}\sigma(Q^k_j) \\
&\leq C(1+\sigma(Q_0))^{p_+-p_-}\sum_{(k,j) \in \mathscr{F}}\sigma(E^k_j) \\
&\leq C(1+\sigma(Q_0)))^{p_+-p_-}\sigma(Q_0),
\end{align*}
which is a constant independent of $Q^k_j$ and $f$.

Now to estimate $I_2$, pick $(k,j) \in \mathscr{G}$. Note that if $x_c(Q^k_j) \in Q_0$, then since $Q^k_j \not\subseteq Q_0$ we must have that 
\[ Q_0 \subseteq Q^k_j \subseteq B(x_c(Q^k_j),A_0(C_d+1)N_0d_0^k). \]
On the other hand, if $x_c(Q^k_j) \not\in Q_0 \supseteq B(x_0,d_0^{k_0})$, then by the definition of $\mathscr{G}$,
\[ d_0^{k_0} \leq d(x_0,x_c(Q^k_j)) \leq N_0d_0^k. \]
As a result, since $x_0 \in B(x_c(Q^k_j),N_0d_0^k)$ and $x \in B(x_0,C_dd_0^{k_0})$, for any $x \in Q_0$,
\[ d(x,x_c(Q^k_j)) \leq A_0(d(x,x_0)+d(x_0,x_c(Q^k_j))) \leq A_0(C_dd_0^{k_0}+N_0d_0^k) \leq A_0(C_d+1)N_0d_0^k. \]
It follows that $Q_0 \subseteq B(x_c(Q^k_j),A_0(C_d+1)N_0d_0^k) \equiv B^k_j$ for any $(k,j) \in \mathscr{G}$. Consequently, we have that $W(B^k_j), \sigma(B^k_j) \geq 1$. Note also that by doubling and Lemma \ref{dyadic}, $\mu(Q^k_j) \approx \mu(B^k_j)$. By Lemma~\ref{normish}, we also have that $\Norm{w^{-1}\chi_{Q_0}}_\cpp \geq 1$, and so by Corollary~\ref{cor} (applied to $w^{-1} \in A_\cpp$),
\[ \mu(Q^k_j)^{-1} \leq C\mu(B^k_j)^{-1} \leq C\mu(Q_0)^{-1}\left(\frac{\sigma(Q_0)}{\sigma(B^k_j)}\right)^{1/p'_\infty} \leq C\Norm{w^{-1}\chi_{B^k_j}}^{-1}_\cpp \leq C\Norm{w^{-1}\chi_{Q^k_j}}_\cpp^{-1}. \]
It follows from this inequality and Lemma~\ref{Holder} that
\[ \avgint_{Q^k_j}f_2(y)\,d\mu \leq C\Norm{w^{-1}\chi_{Q^k_j}}^{-1}_\cpp\Norm{f_2w}_\pp\Norm{w^{-1}\chi_{Q^k_j}}_\cpp \leq C. \]
Given this, we may apply Lemma~\ref{DCU2.7} with the exponents $\pp$ and $p_\infty$ to estimate:
\begin{multline} I_2 \leq C\sum_{(k,j) \in \mathscr{G}} \int_{E^k_j}\left(C^{-1}\avgint_{Q^k_j}f_2(y)\,d\mu\right)^{p(x)}w(x)^{p(x)}\,d\mu \\ \leq C_t \sum_{(k,j) \in \mathscr{G}}\int_{E^k_j}\left(\avgint_{Q^k_j}f_2(y)\,d\mu\right)^{p_\infty}w(x)^{p(x)}\,d\mu + \sum_{(k,j) \in \mathscr{G}}\int_{E^k_j}\frac{w(x)^{p(x)}}{(e+d(x_0,x))^{tp_-}}\,d\mu \label{I2} \end{multline}
Arguing as we did in the proof of Lemma~\ref{Ap to Ainfty} to obtain inequality~\eqref{tsum}, we may choose $t$ sufficiently large (depending only on $X$, $Q_0$, $\pp$, and $w$) so that 
\begin{equation} \label{expint} \sum_{(k,j) \in \mathscr{G}}\int_{E^k_j}\frac{w(x)^{p(x)}}{(e+d(x_0,x))^{tp_-}}\,d\mu \leq \int_{X}\frac{w(x)^{p(x)}}{(e+d(x_0,x))^{tp_-}}\,d\mu \leq 1. \end{equation}
We now need only bound the first term of \eqref{I2}. But we have that
\begin{multline*} \sum_{(k,j)\in\mathscr{G}}\int_{E^k_j}\left(\avgint_{Q^k_j}f_2(y)\,d\mu\right)^{p_\infty}w(x)^{p(x)}\,d\mu \\ = \sum_{(k,j)\in\mathscr{G}}\left(\frac{1}{\sigma(Q^k_j)}\int_{Q^k_j}f_2(y)\sigma(y)^{-1}\sigma(y)\,d\mu\right)^{p_\infty}\left(\frac{\sigma(Q^k_j)}{\mu(Q^k_j)}\right)^{p_\infty}W(E^k_j). \end{multline*}
Now invoking \eqref{sumboundop} (applied to $\sigma$ and then $W$, with cubes) as well as the $\App$ condition,
\begin{equation} \sigma(Q^k_j)^{p_\infty-1} = \sigma(Q^k_j)^{p_\infty/p'_\infty} \leq C\Norm{w^{-1}\chi_{Q^k_j}}_\cpp^{p_\infty} \leq C\left(\frac{\mu(Q^k_j)}{\Norm{w\chi_{Q^k_j}}_\pp}\right)^{p_\infty} \leq C\frac{\mu(Q^k_j)^{p_\infty}}{W(Q^k_j)}. \label{reducingestimate} \end{equation}
If we apply this estimate, Lemmas~\ref{pbound} (since by assumption $p_->1$ and we must have $p_\infty \geq p_-$) and \ref{DCU2.7}, and that $\sigma(Q^k_j) \leq C\sigma(E^k_j)$, then we get
\begin{align}
&\sum_{(k,j)\in\mathscr{G}}\left(\frac{1}{\sigma(Q^k_j)}\int_{Q^k_j}f_2(y)\sigma(y)^{-1}\sigma(y)\,d\mu\right)^{p_\infty}\left(\frac{\sigma(Q^k_j)}{\mu(Q^k_j)}\right)^{p_\infty}W(E^k_j). \label{longstart} \\
&\leq C\sum_{(k,j)\in\mathscr{G}}\left(\frac{1}{\sigma(Q^k_j)}\int_{Q^k_j}f_2(y)\sigma(y)^{-1}\sigma(y)\,d\mu\right)^{p_\infty}\sigma(Q^k_j)W(Q^k_j)^{-1}W(E^k_j) \nonumber \\
&\leq C\sum_{(k,j)\in\mathscr{G}}\left(\frac{1}{\sigma(Q^k_j)}\int_{Q^k_j}f_2(y)\sigma(y)^{-1}\sigma(y)\,d\mu\right)^{p_\infty}\sigma(E^k_j) \nonumber \\
&\leq C\sum_{(k,j)\in\mathscr{G}}\int_{E^k_j}M_\sigma(f_2\sigma^{-1})(x)^{p_\infty}\sigma(x)\,d\mu \nonumber \nonumber \\
&\leq C\int_X M_\sigma(f_2\sigma^{-1})(x)^{p_\infty}\sigma(x)\,d\mu \label{prestrongtypeinequality} \\
&\leq C\int_X (f_2(x)\sigma^{-1}(x))^{p_\infty}\sigma(x)\,d\mu \label{poststrongtypeinequality} \\
&\leq C_t\int_X (f_2(x)\sigma(x))^{p(x)}\sigma(x)\,d\mu + \int_X \frac{\sigma(x)}{(e+d(x_0,x))^{tp_-}}\,d\mu \nonumber \\
&\leq C_t\int_Xf_2(x)^{p(x)}w(x)^{p(x)}\,d\mu + \int_X\frac{\sigma(x)}{(e+d(x_0,x))^{tp_-}}\,d\mu. \label{repeatedresult} 
\end{align}
The second term is bounded by a constant independent of $Q^k_j$ and $f$, by an argument identical to that used to prove \eqref{expint} with $\sigma$ in place of $W$. By \eqref{splitnorm}, the first term is also bounded by a constant, and thus $I_2$ is as well.

We now estimate $I_3$. Central to this part of the proof will be that $d(x_0,x)$ is essentially constant on $Q^k_j$; that is,
\begin{equation}\label{quasiconstant} \sup_{x \in Q^k_j} d(x_0,x) \leq R\inf_{x \in Q^k_j} d(x_0,x), \end{equation}
for some constant $R \geq 1$ independent of $k$ and $j$. In fact, we will show that \eqref{quasiconstant} is true with $Q^k_j$ replaced by the ball $A^k_j = N_0^{-1}B^k_j \supseteq Q^k_j$. To that end, fix $(k,j) \in \mathscr{H}$ and choose $x \in A^k_j$. We have that
\begin{multline*} d(x_0,x) \leq A_0[d(x_0,x_c(Q^k_j))+d(x_c(Q^k_j),x)] \\\leq A_0[d(x_0,x_c(Q^k_j))+C_dd_0^k] \leq \left(A_0+\frac{1}{2}\right)d(x_0,x_c(Q^k_j)). \end{multline*}
Conversely,
\begin{multline*} d(x_0,x_c(Q^k_j)) \leq A_0[d(x,x_c(Q^k_j))+d(x_0,x)] \\ = \frac{1}{2}N_0d_0^k + A_0d(x_0,x) \leq \frac{1}{2}d(x_0,x_c(Q^k_j)) + A_0d(x_0,x), \end{multline*}
and so by rearranging terms,
\[ d(x_0,x_c(Q^k_j)) \leq 2A_0d(x_0,x). \]
It follows that $d(x_0,x_c(Q^k_j)) \approx d(x_0,x)$. This is equivalent to $\eqref{quasiconstant}$.

To now estimate $I_3$, we need to divide $\mathscr{H}$ into two subsets,
\[ \mathscr{H}_1 = \{(k,j) \in \mathscr{H}\,:\,\sigma(Q^k_j) \leq 1\}, \quad \mathscr{H}_2 = \{(k,j) \in \mathscr{H}\,:\, \sigma(Q^k_j) > 1\}. \]
We sum first over $\mathscr{H}_1$. Let $x_+ \in \overline{A^k_j}$ be the point which (by continuity of $\pp \in \LLH$) satisfies $p_+(A^k_j) = p(x_+)$. Then by the $\GLH$ condition and \eqref{quasiconstant}, for all $x \in Q^k_j$,
\begin{multline*} |p_+(Q^k_j) - p(x)| \leq |p(x_+)-p_\infty| + |p(x)-p_\infty| \leq \frac{C_\infty}{\log(e+d(x_0,x_+))}+\frac{C_\infty}{\log(e+d(x_0,x))} \\ \leq C_\infty\left[\frac{1}{\log(e+(RA_0)^{-1}d(x_0,x))} + \frac{1}{\log(e+d(x_0,x))}\right] \leq \frac{C_\infty(RA_0+1)}{\log(e+d(x_0,x))}. \end{multline*}
This provides the necessary condition to apply Lemma~\ref{DCU2.8}, from which (bounding the second term with \eqref{expint} as before) we get
\[ \sum_{(k,j) \in \mathscr{H}_1}\int_{E^k_j}\left(\avgint_{Q^k_j}f_2(y)\,d\mu\right)^{p(x)}w(x)^{p(x)}\,d\mu \leq C_t\sum_{(k,j) \in \mathscr{H}_1}\int_{E^k_j}\left(\avgint_{Q^k_j}f_2(y)\,d\mu\right)^{p_+(Q^k_j)} + 1. \]
By appealing to Lemma~\ref{DCU2.9} for the inequality
\[ \mu(Q^k_j)^{p(x)-p_+(Q^k_j)} \leq C, \]
we may bound the sum on the right by
\[ C\sum_{(k,j) \in \mathscr{H}_1}\int_{E^k_j}\left(\frac{1}{\sigma(Q^k_j)}\int_{Q^k_j}f_2(y)\sigma(y)^{-1}\sigma(y)\,d\mu\right)^{p_+(Q^k_j)} \sigma(Q^k_j)^{p_+(Q^k_j)}\mu(Q^k_j)^{-p(x)}w(x)^{p(x)}\,d\mu, \]
and since $f_2\sigma^{-1} \leq 1$, by Lemma~\ref{DCU2.7} we may continue to estimate
\begin{align*}
&\leq C\sum_{(k,j) \in \mathscr{H}_1}\int_{E^k_j}\left(\frac{1}{\sigma(Q^k_j)}\int_{Q^k_j}f_2(y)\sigma(y)^{-1}\sigma(y)\,d\mu\right)^{p_\infty} \sigma(Q^k_j)^{p_+(Q^k_j)}\mu(Q^k_j)^{-p(x)}w(x)^{p(x)}\,d\mu
\\ &\,\,\,\,\,+ C\sum_{(k,j) \in \mathscr{H}_1} \sigma(Q^k_j)^{p_+(Q^k_j)}\mu(Q^k_j)^{-p(x)}\frac{w(x)^{p(x)}}{(e+d(x_0,x))^{tp_-}}\,d\mu
\\ &= CJ_1 + CJ_2.
\end{align*}
To estimate $J_2$ we use that $\sigma(Q^k_j) \leq 1$, then apply \eqref{assumption}---together with the fact that $\sigma(Q^k_j) \leq C\sigma(E^k_j)$, as used in the $f_1$ argument---and subsequently \eqref{quasiconstant}, to get that
\begin{align*}
J_2 &\leq \sum_{(k,j) \in \mathscr{H}_1} \sup_{x \in E^k_j} (e+d(x_0,x))^{-tp_-}\int_{E^k_j}\sigma(Q^k_j)^{p_-(Q^k_j)}\mu(Q^k_j)^{-p(x)}w(x)^{p(x)}\,d\mu \\
&\leq C\sum_{(k,j) \in \mathscr{H}_1} \sup_{x \in E^k_j} (e+d(x_0,x))^{-tp_-}\sigma(E^k_j) \\
&\leq C\sum_{(k,j) \in \mathscr{H}_1} \int_{E^k_j}\frac{\sigma(x)}{(e+d(x_0,x))^{tp_-}}\,d\mu \\
&\leq C\int_X\frac{\sigma(x)}{(e+d(x_0,x))^{tp_-}}\,d\mu,
\end{align*}
which is the same quantity as the second term in \eqref{repeatedresult}, which we argued was bounded by a constant at the end of the estimate for $I_2$.

Similarly, to estimate $J_1$ we may use that $\sigma(Q^k_j) \leq 1$ and \eqref{assumption} to get that
\begin{align*} J_1 &\leq C\sum_{(k,j) \in \mathscr{H}_1} \left(\frac{1}{\sigma(Q^k_j)}\int_{Q^k_j}f_2(y)\sigma(y)^{-1}\sigma(y)\,d\mu\right)^{p_\infty} \sigma(Q^k_j).
\intertext{Again using that $\sigma(Q^k_j) \leq C\sigma(E^k_j)$, we get that}
&\leq C\sum_{(k,j) \in \mathscr{H}_1} \left(\frac{1}{\sigma(Q^k_j)}\int_{Q^k_j}f_2(y)\sigma(y)^{-1}\sigma(y)\,d\mu\right)^{p_\infty} \sigma(E^k_j) \\
&\leq C\int_X M_\sigma(f_2\sigma^{-1})(x)^{p_\infty}\sigma(x)\,d\mu.
\end{align*}
But this is yet another quantity that appears near the end of the $I_2$ estimate, and thus it is bounded by a constant. This completes the estimate for $\mathscr{H}_1$.

Finally, we now estimate the sum over $\mathscr{H}_2$. By Lemma~\ref{Holder},
\[ \int_{Q^k_j} f_2(y)\,d\mu \leq c\Norm{f_2w}_\pp\Norm{w^{-1}\chi_{Q^k_j}}_\cpp \leq c\Norm{w^{-1}\chi_{Q^k_j}}_\cpp. \]
Thus we can apply Lemma~\ref{DCU2.7} to get
\begin{align*}
&\sum_{(k,j) \in \mathscr{H}_2} \int_{E^k_j}\left(\avgint_{Q^k_j}f_2(y)\,d\mu\right)^{p(x)}w(x)^{p(x)}\,d\mu \\
&\,\,\,\,\,\,\leq C\sum_{(k,j) \in \mathscr{H}_2} \int_{E^k_j}\left(c\Norm{w^{-1}\chi_{Q^k_j}}^{-1}_\cpp\int_{Q^k_j}f_2(y)\,d\mu\right)^{p(x)} \left(\frac{\Norm{w^{-1}\chi_{Q^k_j}}_\cpp}{\mu(Q^k_j)}\right)^{p(x)}w(x)^{p(x)}\,d\mu \\
&\,\,\,\,\,\,\leq C\sum_{(k,j) \in \mathscr{H}_2} \int_{E^k_j}\left(\Norm{w\chi_{Q^k_j}}^{-1}_\cpp\int_{Q^k_j}f_2(y)\,d\mu\right)^{p_\infty}\left(\frac{\Norm{w^{-1}\chi_{Q^k_j}}_\cpp}{\mu(Q^k_j)}\right)^{p(x)}w(x)^{p(x)}\,d\mu \\
&\,\,\,\,\,\,\,\,\,\,\,+\sum_{(k,j) \in \mathscr{H}_2} \int_{E^k_j}\left(\frac{\Norm{w^{-1}\chi_{Q^k_j}}_\cpp}{\mu(Q^k_j)}\right)^{p(x)}\frac{w(x)^{p(x)}}{(e+d(x_0,x))^{tp_-}}\,d\mu \\ &\,\,\,\,\,\,= K_1 + K_2.
\end{align*}
To estimate $K_2$, note that $1 \leq \sigma(Q^k_j) \leq C\sigma(E^k_j)$, so $\sigma(E^k_j) > \epsilon$ for some fixed constant $\epsilon > 0$. Therefore, by \eqref{partthree} and \eqref{quasiconstant} we have that
\begin{align*}
K_2 &\leq \epsilon^{-1}\sum_{(k,j) \in \mathscr{H}_2} \sup_{x \in E^k_j}(e+d(x_0,x))^{-tp_-} \epsilon \int_{Q^k_j}\left(\frac{\Norm{w^{-1}\chi_{Q^k_j}}_\cpp}{\mu(Q^k_j)}\right)^{p(x)}w(x)^{p(x)}\,d\mu \\
&\leq C\sum_{(k,j) \in \mathscr{H}_2} \sup_{x \in E^k_j}(e+d(x_0,x))^{-tp_-}\sigma(E^k_j) \\
&\leq C\int_X \frac{\sigma(x)}{(e+d(x_0,x))^{tp_-}}\,d\mu,
\end{align*}
which as we argued in $J_2$ and $I_2$ is bounded by a constant.

To estimate $K_1$, we use \eqref{sumboundop} to get
\[ \Norm{w^{-1}\chi_{Q^k_j}}_\cpp^{-p_\infty}\sigma(Q^k)j)^{p_\infty} \leq C\sigma(Q^k_j)^{p_\infty-p_\infty/p'_\infty} = C\sigma(Q^k_j). \]
Therefore, applying \eqref{partthree} and that $\sigma(Q^k_j) \leq C\sigma(E^k_j)$, we have
\begin{align*}
K_1 &= \sum_{(k,j) \in \mathscr{H}_2}\int_{E^k_j}\left(\frac{1}{\sigma(Q^k_j)}\int_{Q^k_j}f_2(y)\,d\mu\right)^{p_\infty}\Norm{w^{-1}\chi_{Q^k_j}}_\cpp^{p(x)-p_\infty}\frac{\sigma(Q^k_j)^{p_\infty}}{\mu(Q^k_j)^{p(x)}}w(x)^{p(x)}\,d\mu \\
&\leq C\sum_{(k,j) \in \mathscr{H}_2}\left(\frac{1}{\sigma(Q^k_j)}\int_{Q^k_j}f_2(y)\,d\mu\right)^{p_\infty}\sigma(Q^k_j)\int_{Q^k_j}\Norm{w^{-1}\chi_{Q^k_j}}_\cpp^{p(x)}\mu(Q^k_j)^{-p(x)}w(x)^{p(x)}\,d\mu \\
&\leq C\sum_{(k,j) \in \mathscr{H}_2}\left(\frac{1}{\sigma(Q^k_j)}\int_{Q^k_j}f_2(y)\,d\mu\right)^{p_\infty}\sigma(E^k_j) \\
&\leq C\int_X M_\sigma(f_2\sigma^{-1})(x)^{p_\infty}\sigma(x)\,d\mu.
\end{align*}
This last term is the same quantity that appeared in \eqref{prestrongtypeinequality}, which as we argued in the estimates for $J_2$ and $I_2$ is bounded by a constant. This completes the estimate for $I_3$, and thus gives us the desired estimate for $f_2$, completing our proof of the sufficiency of the $\App$ condition in Theorem~\ref{goal} for the strong-type inequality.

\bigskip

\subsection*{The finite case}
If $\mu(X) < \infty$, we may apply the same proof as in the infinite case, with some modifications. For each $i=1,2$, in accordance with Lemma~\ref{CZ}, we may only construct the CZ cubes at heights greater than $\lambda_0 = \avgint_X f_i\,d\mu$. Note that with the assumption that $\Norm{fw}_\pp = 1$ as before, we have from Lemma~\ref{Holder} that
\[ \lambda_0 \leq 4\mu(X)^{-1}\Norm{f_iw}_\pp\Norm{w^{-1}}_\cpp \leq 4\mu(X)^{-1}\Norm{w^{-1}}_\cpp. \]
By Lemma~\ref{normish}, this is bounded by a constant, since from the $\App$ condition with $B=X$,
\[ \Norm{w^{-1}}_\cpp \leq C\mu(X)\Norm{w}_\pp^{-1}. \]
Fix $a = 2C_{CZ}$ and let $Q^k_j$ denote, as before, the CZ cubes of $f_i$ at height $a^k$, where $k \geq k_0 = \floor{\log_a\lambda_0+1}$. These cubes cover only $X_{k_0} = \{x \in X\,:\,M^\mathcal{D}f_i(x) > \lambda_0\}$. If, however, we define
\[ X_0 = \{M^\mathcal{D}f_i(x) \leq \lambda_0\} = X \setminus X_{k_0} \]
then
\[ X = \left(\bigcup_{k=k_0}^\infty X_k \setminus X_{k+1}\right)\bigcup X_0. \]
Thus the analogous argument to \eqref{bothbound} proceeds as
\begin{align*}
\int_X M^\mathcal{D}f_i(x)^{p(x)}&w(x)^{p(x)}\,d\mu\\
&= \int_{X_0} M^\mathcal{D}f_i(x)^{p(x)}w(x)^{p(x)}\,d\mu + \sum_{k=k_0}^\infty \int_{X_k \setminus X_{k+1}} M^\mathcal{D}f_i(x)^{p(x)}w(x)^{p(x)}\,d\mu \\ &\leq \lambda_0W(X) + C\sum_{k \geq k_0, j} \int_{E^k_j}\left(\int_{Q^k_j}f_i\sigma^{-1}\sigma\,d\mu\right)^{p(x)}\mu(Q^k_j)^{-p(x)}w(x)^{p(x)}\,d\mu.
\end{align*}
Since $\lambda_0$ is bounded by a constant, the first term depends only on $X$, $\mathcal{D}$, and $\pp$. For $f_1$, the second term may be controlled by an argument identical to that of the infinite case.

The $f_2$ case, on the other hand, simplifies greatly: essentially, we just choose $Q_0 = X$, and so $I_2 = I_3 = 0$. More explicitly, since $f_2\sigma^{-1} \leq 1$, if $\sigma(X) \geq 1$, then by \eqref{assumption} and the fact that $\sigma(Q^k_j) \leq C\sigma(E^k_j)$, the second term in the above expression is bounded by
\begin{align*}
\sum_{k \geq k_0, j} \int_{E^k_j}\sigma(Q^k_j)^{p(x)}&\mu(Q^k_j)^{-p(x)}w(x)^{p(x)}\,d\mu \\ &\leq \sum_{k \geq k_0, j} \int_{E^k_j}\sigma(X)^{p(x)}\left(\frac{\sigma(Q^k_j)}{\sigma(X)}\right)^{p(x)}\mu(Q^k_j)^{-p(x)}w(x)^{p(x)}\,d\mu \\
&\leq \sigma(X)^{p_+}\sum_{k \geq k_0,j} \int_{E^k_j} \left(\frac{\sigma(Q^k_j)}{\sigma(X)}\right)^{p_-(Q^k_j)}\mu(Q^k_j)^{-p(x)}w(x)^{p(x)}\,d\mu \\
&\leq \sigma(X)^{p_+-p_-}\sum_{k \geq k_0,j} C\sigma(E^k_j) \\
&\leq C\sigma(X)^{p_+-p_-+1}.
\end{align*}
If $\sigma(X) < 1$, simply exchange $p_+$ with $p_-$. This proves sufficiency for $\mu(X) < \infty$.

\newpage
\nocite{*}

\bibliographystyle{plain}
\bibliography{bibliography}

\end{document}